\newcommand{\bea}{\begin{eqnarray}}
\newcommand{\eea}{\end{eqnarray}}
\def\beaa{\begin{eqnarray*}}
\def\eeaa{\end{eqnarray*}}
\def\ba{\begin{array}}
\def\ea{\end{array}}
\def\be#1{\begin{equation} \label{#1}}
\def \eeq{\end{equation}}
\newcommand{\gap}{\hspace{1pt}}
\def\be{{\beta}}
\def\R{{\mathbb{R}}}
\def\N{{\mathbb N}}
\def\sgH2{\sigma_H^2}
\def\sgL2{\sigma_L^2}
\newtheorem{theorem}{Theorem}[section]
\newtheorem{lemma}[theorem]{Lemma}
\newtheorem{proposition}[theorem]{Proposition}
\newtheorem{corollary}[theorem]{Corollary}
\newtheorem{definition}[theorem]{Definition}
\newtheorem{remark}[theorem]{Remark}
\newtheorem{example}[theorem]{Example}
\numberwithin{equation}{section}
\numberwithin{equation}{section}
\begin{document}

\title[Free interface problem with fractional order kinetics]{Existence of a traveling wave solution in a free interface problem with fractional order kinetics}
\author[C.-M. Brauner. R. Roussarie, P. Shang, L. Zhang]{Claude-Michel Brauner, Robert Roussarie, Peipei Shang, Linwan Zhang}
\address{School of Mathematical Sciences, Tongji University, 1239 Siping Rd., Shanghai 200092, China, and Institut de Math\'ematiques de Bordeaux UMR CNRS 5251, universit\'e de Bordeaux, 33405 Talence Cedex, France.}
\address{Institut de Math\'ematique de Bourgogne UMR CNRS 5584, universit\'e de Bourgogne-Franche Comt\'e, B.P. 47870
	21078 Dijon Cedex, France}
\address{School of Mathematical Sciences, Tongji University, 1239 Siping Rd., Shanghai 200092, China.}
\address{School of Mathematical Sciences, Tongji University, 1239 Siping Rd., Shanghai 200092, China.}
\email{claude-michel.brauner@u-bordeaux.fr}
\email{Robert.Roussarie@u-bourgogne.fr}
\email{shang@tongji.edu.cn}
\email{1810879@tongji.edu.cn}
\thanks{$^*$ Corresponding author: Peipei SHANG (shang@tongji.edu.cn)}


\begin{abstract} In this paper we consider a system of two reaction-diffusion equations that models
diffusional-thermal combustion with stepwise ignition-temperature kinetics and fractional reaction
order $0 < \alpha< 1$. We turn the free interface problem into a scalar free boundary problem coupled with an integral equation. The main intermediary step is to reduce the scalar problem to the study of a non-$C^1$ vector field in dimension $2$. The latter is treated by qualitative topological methods based on the Poincaré-Bendixson Theorem. The phase portrait is determined and the existence of a stable manifold at the origin is proved. A significant result is that the settling time to reach the origin is finite, meaning that the trailing interface is finite in contrast to the case $\alpha=1$, but in accordance with $\alpha=0$. Finally, the integro-differential system is solved via a fixed-point method.
	\end{abstract}
\maketitle
\section{Introduction and Main Results}
A flame spreading through a motionless gas may be described by a system of two parabolic nonlinear equations for the normalized temperature, $T$, and the concentration of deficient reactant, $Y$.  This system, in non-dimensional form, reads:
\begin{eqnarray}\label{evolution_system}
\left\{
\begin{array}{l}
T_t=\Delta T+W(T,Y), \\[2mm]
\displaystyle Y_t=\Lambda  \Delta Y-W(T,Y),
\end{array}
\right.
\end{eqnarray}
where $\Lambda $ (the ratio of mass diffusivity and thermal diffusivity) is the inverse of the Lewis number  and  $W(T,Y)$ is the reaction rate.

In this paper, the reaction rate is assumed to be of stepwise ignition type, that is, $W(T,Y)\equiv 0$  when the normalized temperature $T$ is below the ignition temperature $\theta \in (0,1)$,
and $W(T,Y)$ depends only on the concentration $Y$ when the temperature is above $\theta$. The recent renewed interest in stepwise ignition temperature kinetics is due to its applicability to the studies of overall effective chemical kinetics of certain reactive mixtures
(see Brailovsky, Gordon, Kagan and Sivashinsky \cite{BGKS15} and references therein).

In \cite[Section 2]{BGKS15}, two classes of stepwise kinetics are considered.
\begin{enumerate}[label=(\arabic*), wide, labelwidth=!, labelindent=0pt]
	\item [\rm (i)] Zero-order stepwise temperature kinetics with reaction rate given by
	\begin{eqnarray}\label{zero_order}
	W_0(T,Y)=
	\left\{
	\begin{array}{lllll}
	1,  & \mbox{if} &T\geqslant \theta & \mbox{and}& Y>0, \\[2mm]
	0, & \mbox{if} &T<\theta & \mbox{and/or} & Y=0.
	\end{array}
	\right.
	\end{eqnarray}
	\item [\rm (ii)] First-order stepwise temperature kinetics with reaction rate given by
	\begin{eqnarray}\label{first_order}
	W_1(T,Y)=
	\left\{
	\begin{array}{lllll}
	Y,  & \mbox{if} & T\geqslant \theta, \\[2mm]
	0, & \mbox{if} &T<\theta,
	\end{array}
	\right.
	\end{eqnarray}
	or, equivalently, $W_1(T,Y)= Y H(T- \theta)$ where $H$ is the Heaviside function.
\end{enumerate}

It was shown in \cite{BGKS15} that problem \eqref{evolution_system} is equivalent to a free
interface problem in both cases. However, the resulting free interface problems for these two
cases are very different. To clarify this difference, it is convenient to consider problem \eqref{evolution_system} in a two-dimensional strip $\mathbb{R} \times (-{\ell}/{2},{\ell}/{2})$; the spatial coordinates are denoted by $(x,y)$, $t>0$ is the time.

First, in the case of the zero-order kinetics \eqref{zero_order}, there are two free interfaces:
the \textsl{ignition interface} $x=f(t,y)$, also called the flame front, defined by
\begin{equation}
\label{ignition}
T(t, f(t,y),y) = \theta,
\end{equation}
and the \textsl{trailing interface}  $x = g(t,y)$, $g(t,y)>f(t,y)$, defined by
\begin{equation}
\label{trailing}
Y(t, g(t,y),y) = 0.
\end{equation}
Then, the system for $(T,Y)$ reads as follows
\begin{eqnarray}\label{FBP_zero}
\left\{
\begin{array}{ll}
T_t= \Delta T, \;
Y_t= \Lambda\Delta Y, \quad &x<f(t,y),\\[2mm]
T_t = \Delta T + 1, \;
Y_t = \Lambda \Delta Y - 1, \quad &f(t,y)<x<g(t,y), \\[2mm]
T_t = \Delta T, \;
Y(t,x,y)=0, \quad &x>g(t,y).
\end{array}
\right.
\end{eqnarray}
The functions $T$ and $Y$ and their normal derivatives are continuous across the ignition and trailing free interfaces. As $x \to \pm\infty$, the following holds:
\begin{equation}\label{boundary_conditions_0}
T(t, -\infty,y)=0, \quad Y(t,-\infty,y)= T(t,+\infty,y)=1.
\end{equation}

Second, in the case of first-order stepwise kinetics, the system reads:
\begin{eqnarray}
\label{FBP_first}
&&\left\{\begin{aligned}
&T_t=\Delta T, \;
Y_t=\Lambda \Delta Y,\quad &x<f(t,y),\\[2mm]
	&T_t=\Delta T+Y, \;
Y_t=\Lambda \Delta Y-Y,\quad &x>f(t,y).
\end{aligned}\right.
\end{eqnarray}
Therefore, only the ignition interface is involved, at which $T$ and $Y$ and their normal derivatives are continuous. As $x \to \pm\infty$,
\begin{equation}\label{boundary_conditions_1}
T(t,-\infty,y)=Y(t,+\infty,y)=0, \quad Y(t,-\infty,y)= T(t,+\infty,y)=1.
\end{equation}

Such drastic difference in qualitative properties of traveling front solution for zero- and first-
order reaction models raises a natural question: why two free interfaces are generated in the case of zero-order stepwise kinetics \eqref{zero_order} and just one in first-order kinetics \eqref{first_order}.
We aim to understand the role of the reaction rate in the process; to this end, we consider the gamut of intermediate combustion systems when the order of the reaction
(which we will denote by $\alpha$) is between $0$ and $1$.
It is important to note that
this question is interesting not only from mathematical viewpoint but also from perspective of
applications in combustion. Indeed, as evident from experimental observation, the overall
reaction order can change quite substantially with the equivalence ratio (i.e. the ratio of fuel
and oxidizer in the mixture) as well as pressure. For example, in the not too extreme case of
hydrogen-air flame with equivalence ratio $3$, the overall reaction rate of $\alpha = 1$ at pressure $20$
atm drops to $\alpha = 0.3$ at $50$ atm (see \cite[p.~280]{Law10}).

The main goal of this paper is to understand the role of the reaction order $0 < \alpha< 1$
on qualitative properties of planar traveling fronts for free interface problems as well as their
connection with the limiting systems associated with the cases $\alpha = 0$ and $\alpha = 1$.
Mathematically, we fill the gap between the zero-order and first-order kinetics. Mimicking \eqref{first_order}, we define an ``$\alpha$-order reaction rate'' for $0<\alpha<1$:
\begin{eqnarray}\label{alpha-order kinetics}
W_{\alpha}(T,Y)=Y^{\alpha} H(T-\theta).
\end{eqnarray}
At least formally, we state that $W_{\alpha}(T,Y) \to W_0(T,Y)$ when $\alpha$ tends to $0^+$, and $W_1(T,Y)$ obviously is retrieved as $\alpha \to 1^-$.

To facilitate this study, we limit ourselves to a special class of solutions, namely, one-dimensional (planar) traveling wave solutions $(u,v)$ of the free interface problem which travel at a constant negative velocity $-c$; $c>0$ is to be determined. In the moving frame coordinate, $\xi=x+ct$, the ignition interface is fixed at $\xi=0$, taking advantage of the translation invariance. The trailing front is at an unknown position, $\xi=R$, $0<R<+\infty$.

In the case of zero-order kinetics, we recall that the one-dimensional free interface problem for $(c,R,u(\xi),v(\xi))$ in the moving frame coordinate reads:
\begin{eqnarray}\label{TW_zero}
\left\{
\begin{array}{ll}
 u_{\xi\xi}-cu_{\xi}=0, \quad \Lambda v_{\xi\xi}-cv_{\xi}=0,
 \quad &\xi<0,\\[2mm]
 u_{\xi\xi}-cu_{\xi}=-1, \quad \Lambda v_{\xi\xi}-cv_{\xi}=1, \quad
 &0<\xi<R, \\[2mm]
u_{\xi\xi}-cu_{\xi}=0, \quad
v=0, \quad &\xi>R.
\end{array}
\right.
\end{eqnarray}
The functions $u$ and $v$ are continuously differentiable on the real line such that, according to \eqref{boundary_conditions_0}, $u(-\infty)=0$, $v(-\infty)=1$ and $u(+\infty)=1$. At the interfaces, $u(0)=\theta$ and $v(R)=0$, respectively. It is readily seen that $u(\xi)=1$ for $\xi \geqslant R$; hence, $u_{\xi}(R)=v_{\xi}(R)=0$. In this respect, the trailing interface is a degenerate free boundary in the sense of \cite{BHL00} (in contrast to the ignition interface), which creates further difficulty in the stability analysis (see \cite{BGZ16},\cite{ABLZ20}). Naturally, solving \eqref{TW_zero} is an elementary exercise; it happens that $c=R$ is the unique strictly positive solution of the transcendental equation
\begin{eqnarray}\label{speed_zero}
\displaystyle e^{c^2}=\frac{1}{1-c^2 \theta}, \quad 0<\theta<1.
\end{eqnarray}
(Note that $c$ is independent of $\Lambda$; see Subsection \ref{Lambda=0} below).

For the first-order kinetics, the system for the triplet $(c,u(\xi),v(\xi))$ reads:
\begin{eqnarray}\label{TW_first}
\left\{
\begin{array}{ll}
u_{\xi\xi}-cu_{\xi}=0, \quad \Lambda v_{\xi\xi}-cv_{\xi}=0,
\quad &\xi<0,\\[2mm]
u_{\xi\xi}-cu_{\xi}=-v, \quad \Lambda v_{\xi\xi}-cv_{\xi}=v, \quad
&\xi>0.
\end{array}
\right.
\end{eqnarray}
The functions $u$ and $v$ are continuously differentiable, such that $u(-\infty)=0$, $v(-\infty)=1$ and $u(+\infty)=1$, $v(+\infty)=0$. At the ignition interface, $u(0)=\theta$. This time, the speed $c$ is given explicitly by the formula:
\begin{eqnarray}\label{speed_first}
c= \left(\frac{\theta}{1-\theta} + \Lambda \left(\frac{\theta}{1-\theta}\right)^2 \right)^{-\frac{1}{2}}, \quad 0<\theta<1,\; \Lambda>0.
\end{eqnarray}
Here, there is no trailing interface $R$. More specifically, the trailing interface is such that $R=+\infty$ (see \cite{BSN85}).


In the case of a ``$\alpha$-order kinetic'' \eqref{alpha-order kinetics}, the main difficulty arises from the non-Lipschitz nonlinearity $W_{\alpha}$. Similar issues have recently been addressed in the literature, e.g. a non-Lipschitz modification of the classical Michaelis-Menten law in enzyme kinetics in the analysis of healthy immune system dynamics from the perspective of Finite-Time Stability (FTS) (see \cite{OSV14}).
In this regard, the theory of FTS of continuous, but non-Lipschitz systems has been part of numerous papers particularly associated with optimal control (see, e.g., \cite{BB00},\cite{NHH08}). It is to be seen that the free interface $R$ may also be viewed as a finite time.

In this paper, we prove the following result:
\begin{theorem}\label{FBP_alpha}
Let $0<\alpha<1$, $0<\theta<1$, $\Lambda>0$ be fixed. There exist $c>0$ and $0<R<+\infty$ such that the free interface problem
\begin{eqnarray}
\left\{
\begin{array}{ll}
T_t= \Delta T, \;
Y_t= \Lambda\Delta Y, \quad &x<f(t,y),\\[2mm]
T_t = \Delta T + Y^{\alpha}, \;
Y_t = \Lambda \Delta Y - Y^{\alpha}, \quad &f(t,y)<x<g(t,y), \\[2mm]
T_t = \Delta T, \;
Y(t,x,y)=0, \quad &x>g(t,y),
\end{array}
\right.
\end{eqnarray}
with \eqref{ignition}, \eqref{trailing} and \eqref{boundary_conditions_0}, admits a one-dimensional traveling wave solution $(c,R,u(\xi),v(\xi))$, $\xi=x+ct$, which verifies the free interface problem
\begin{eqnarray}\label{TW_alpha}
\left\{
\begin{array}{ll}
u_{\xi\xi}-cu_{\xi}=0,
\quad &\xi<0,\\[1mm]
\Lambda v_{\xi\xi}-cv_{\xi}=0,
\quad &\xi<0,\\[1mm]

u_{\xi\xi}-cu_{\xi}=-v^{\alpha}, \quad
&0<\xi<R, \\[1mm]
\Lambda v_{\xi\xi}-cv_{\xi}=v^{\alpha}, \quad
&0<\xi<R, \\[1mm]

u_{\xi\xi}-cu_{\xi}=0, \quad &\xi>R,  \\[1mm]
v=0, \quad &\xi>R.
\end{array}
\right.
\end{eqnarray}

The functions $u$ and $v$ are continuously differentiable on the real line such that
\begin{equation}\label{conditions_infty}
u(-\infty)=0, \quad v(-\infty)=1, \quad u(+\infty)=1.
\end{equation}
At the ignition and trailing interfaces, respectively, placed at $\xi=0$ and $\xi=R$, it holds
\begin{equation}\label{interface_conditions}
u(0)=\theta, \quad v(R)=v_\xi(R)=0.
\end{equation}

Moreover, denoting by $R_{\alpha}$ the position of the trailing interface at fixed $\alpha$, we have:
\begin{enumerate}
	\item [\rm (i)] $0<R_{\alpha}<+\infty$, $0<\alpha<1$;
	\item [\rm (ii)] as $\alpha \to 1$, $R_{\alpha} \to +\infty$;
	\item [\rm (iii)] as $\alpha \to 0$, $R_{\alpha} \to R=c$, solution of \eqref{speed_zero}.	
\end{enumerate}
\end{theorem}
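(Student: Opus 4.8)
The plan is to make the problem explicit on the two outer intervals, thereby reducing it to a scalar free boundary problem for $v$ on $(0,R)$ together with an integral equation for $u$, then to analyze the resulting autonomous, non-$C^1$ planar vector field by phase-plane and Poincaré-Bendixson arguments, and finally to select $(c,R)$ from a scalar equation by a fixed-point/continuity argument; the assertions (i)--(iii) then come from tracking this construction as $\alpha$ varies. First I would integrate the linear equations on the outer intervals. On $\xi<0$, the conditions $u(-\infty)=0$, $v(-\infty)=1$ and $u(0)=\theta$ give $u(\xi)=\theta e^{c\xi}$ and $v(\xi)=1+(v(0)-1)e^{c\xi/\Lambda}$, so that $u_\xi(0^-)=c\theta$ and $\Lambda v_\xi(0^-)=c(v(0)-1)$; on $\xi>R$, $v\equiv0$ and boundedness forces $u$ to be constant, whence $u_\xi(R^+)=0$ and $u(+\infty)=u(R)$. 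On $(0,R)$ the component $v$ satisfies the closed scalar equation $\Lambda v_{\xi\xi}-cv_\xi=v^\alpha$ with the degenerate terminal data $v(R)=v_\xi(R)=0$, while $u$ is recovered from $v$ by one quadrature, namely $u_\xi(\xi)=e^{c\xi}\bigl(c\theta-\int_0^\xi v^\alpha e^{-c\eta}\,d\eta\bigr)$ — the integral equation. The quantity $G:=(u+\Lambda v)_\xi-c(u+v)$ is constant on $(0,R)$ (adding the two middle equations gives $G_\xi\equiv0$), and comparing its two endpoint values shows that, once the matching condition $\Lambda v_\xi(0)=c(v(0)-1)$ holds, the conditions $u_\xi(R)=0$ and $u(+\infty)=1$ become equivalent. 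Hence the whole problem reduces to finding $(c,R)\in(0,\infty)^2$ and a positive solution $v$ of $\Lambda v_{\xi\xi}-cv_\xi=v^\alpha$ on $(0,R)$ with $v(R)=v_\xi(R)=0$, subject to the two scalar conditions $\Lambda v_\xi(0)=c(v(0)-1)$ and $c\theta=\int_0^R v^\alpha e^{-c\eta}\,d\eta$.

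Next comes the heart of the argument. The scaling $\xi=\tfrac{\Lambda}{c}\tau$, $v=(\Lambda/c^2)^{1/(1-\alpha)}V$ turns the $v$-equation into the parameter-free planar system $V'=W$, $W'=W+V^\alpha$, so the qualitative study need be carried out only once. The origin is a non-hyperbolic, non-Lipschitz equilibrium ($\tfrac{d}{dV}V^\alpha=\alpha V^{\alpha-1}\to+\infty$ as $V\to0^+$), so classical Cauchy-Lipschitz and invariant-manifold theory does not apply; the crux is to exhibit the trajectory reaching it. Writing $\sigma$ for the remaining time-to-origin and positing $V\sim a\sigma^\beta$ forces $\beta=\tfrac{2}{1-\alpha}>2$ and $a=(\beta(\beta-1))^{-1/(1-\alpha)}$, and this expansion is upgraded to a local existence-and-uniqueness statement by a contraction mapping in a weighted space adapted to that exponent; this is the stable manifold at the origin, reached in \emph{finite} time $\sigma$ — the mechanism behind the finiteness of $R$. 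Away from the origin the field is analytic; along the relevant branch $V>0$ (equivalently $v_\xi<0$ in the original frame), $V$ is monotone, the trajectory stays in a half-plane, it extends for all $\sigma>0$, and it grows algebraically, $V\sim(1-\alpha)^{1/(1-\alpha)}\sigma^{1/(1-\alpha)}$; Poincaré-Bendixson-type arguments rule out spurious recurrence and finite-time escape and fix the global phase portrait. Denoting by $\widehat V$ this universal profile, with $\widehat V':=d\widehat V/d\sigma>0$ for $\sigma>0$, every $\sigma_0>0$ yields, after unscaling, an admissible $v$ on $(0,R)$ with $R=\Lambda\sigma_0/c$, $v(0)=\mu\widehat V(\sigma_0)$ and $v_\xi(0)=-\tfrac{c\mu}{\Lambda}\widehat V'(\sigma_0)$, where $\mu=(\Lambda/c^2)^{1/(1-\alpha)}$.

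Finally I would close the system. In the rescaled variables the matching condition becomes $\mu\,(\widehat V(\sigma_0)+\widehat V'(\sigma_0))=1$, so $v(0)=\widehat V(\sigma_0)/(\widehat V(\sigma_0)+\widehat V'(\sigma_0))\in(0,1)$ automatically; substituting $\mu$ (equivalently, $c^2=\Lambda(\widehat V(\sigma_0)+\widehat V'(\sigma_0))^{1-\alpha}$) into the integral equation collapses the system to the single scalar equation
\[
\theta\,\bigl(\widehat V(\sigma_0)+\widehat V'(\sigma_0)\bigr)=\int_0^{\sigma_0}\widehat V(\sigma_0-s)^\alpha\,e^{-\Lambda s}\,ds=:\Phi(\sigma_0).
\]
By the local expansion of $\widehat V$, both sides behave like a constant times $a^\alpha\sigma_0^{\beta-1}$ as $\sigma_0\to0^+$, with the left side strictly smaller — their ratio tends to $\theta<1$, which is exactly where the hypothesis $\theta<1$ is used — while the algebraic growth of $\widehat V$ makes the left side dominate as $\sigma_0\to+\infty$; hence a root $\sigma_0\in(0,\infty)$ exists, and one recovers $c=\sqrt{\Lambda}\,(\widehat V(\sigma_0)+\widehat V'(\sigma_0))^{(1-\alpha)/2}$ (so $c\le1/\sqrt\theta$, using $\Phi(\sigma_0)\le\widehat V(\sigma_0)^\alpha/\Lambda$) and $R=R_\alpha=\Lambda\sigma_0/c\in(0,\infty)$ — a genuine traveling wave, which is (i). For (ii): since the first-order profile ($\alpha=1$) has its $v$-component strictly positive on all of $(0,\infty)$, a limiting argument based on that profile's uniqueness, together with the uniform bound $c_\alpha\le1/\sqrt\theta$, forces $v_\alpha>0$ on arbitrarily long intervals, hence $R_\alpha\to+\infty$; equivalently, $\beta\to\infty$ and $a\to0$ drive the root $\sigma_0$ of the scalar equation to $+\infty$. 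For (iii): as $\alpha\to0^+$, $v^\alpha\to1$ on $\{v>0\}$ and the middle equations degenerate to $\Lambda v_{\xi\xi}-cv_\xi=1$, $u_{\xi\xi}-cu_\xi=-1$, i.e.\ precisely \eqref{TW_zero}; concretely $\widehat V(\sigma)=\sigma+e^{-\sigma}-1$, so $\widehat V+\widehat V'=\sigma$ and the scalar equation becomes $\theta\sigma_0=(1-e^{-\Lambda\sigma_0})/\Lambda$ with $c^2=\Lambda\sigma_0=R^2$, which reduces to $e^{c^2}=1/(1-\theta c^2)$, giving $R_\alpha\to R=c$ of \eqref{speed_zero}.

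The step I expect to be the main obstacle is the analysis of the non-$C^1$ field near the origin: proving existence, uniqueness and the finite settling time of the stable-manifold trajectory, where the classical ODE and invariant-manifold theorems fail, and then stitching this delicate local analysis to a global qualitative (Poincaré-Bendixson) description sharp enough that the reduced scalar equation $\theta(\widehat V+\widehat V')=\Phi$ is well posed and the fixed-point/continuity argument closes.
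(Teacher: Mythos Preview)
Your proposal is correct and reaches the same conclusions, but the route differs substantially from the paper's. The paper keeps $(c,\Lambda)$ in the vector field $X_c$, parametrizes by $v_0=v(0)\in(0,1)$, and for each fixed $v_0$ uses trapping triangles and Poincar\'e--Bendixson to show that a unique $c(v_0)>0$ places the initial point on the stable manifold; the finite settling time $R(v_0)$ then comes from the asymptotic $y_c(x)\sim -\sqrt{2/((1+\alpha)\Lambda)}\,x^{(1+\alpha)/2}$. The coupling with $u$ is recast as one of the integral identities $\theta+\Lambda v(0)=1-c(1-\Lambda)\int_0^R e^{-cs}v\,ds$ or $\theta+v(0)=1-(1-\Lambda)\int_0^R e^{-cs}v'\,ds$, and a Brouwer-type fixed point on a $\Lambda$-dependent interval $I\subset(0,1)$ selects $v_0$. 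Your scaling $\xi=\tfrac{\Lambda}{c}\tau$, $v=(\Lambda/c^2)^{1/(1-\alpha)}V$ is the key departure: it removes both $c$ and $\Lambda$ from the phase plane, producing a single universal profile $\widehat V$, so existence, uniqueness and finite settling time are proved once rather than for a $c$-family; $\Lambda$ resurfaces only through the weight $e^{-\Lambda s}$ in $\Phi$. Your integral condition $c\theta=\int_0^R v^\alpha e^{-c\eta}\,d\eta$ (directly from $u_\xi(R)=0$) is equivalent to the paper's equations after one integration by parts, and your reduction to the single scalar equation $\theta(\widehat V(\sigma_0)+\widehat V'(\sigma_0))=\Phi(\sigma_0)$ solved by the intermediate value theorem is cleaner than the paper's projected fixed point on $v_0$; it also yields the uniform bound $c\le 1/\sqrt\theta$ and makes the limit $\alpha\to0$ completely explicit. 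What the paper's approach buys is robustness at the non-$C^1$ origin: trapping triangles plus a monotone comparison argument give existence and uniqueness of the stable manifold without setting up a weighted contraction (your proposed local step), which is the point where your sketch is least detailed. One minor slip: the planar system you write, $W'=W+V^\alpha$, is in forward time $\tau$; your asymptotics, including the large-$\sigma$ behavior $V\sim((1-\alpha)\sigma)^{1/(1-\alpha)}$, are valid for the reversed-time equation $V_{\sigma\sigma}+V_\sigma=V^\alpha$ --- the computations are right, just keep the sign convention consistent.
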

 As is seen, the most significant result is that $R$ is finite for $0\leqslant \alpha<1$. As a consequence, the ``cut-off'' exponent for the existence of a finite trailing interface is $\alpha=1$ (see \cite{BSN85} for a comprehensive study of the case $\alpha=1$ and also the case of a $n$-order reaction  rate where $n$ is an integer greater than $1$).

\vskip 2mm
The paper is organized as follows: \\
\indent Firstly, we point out in Section \ref{equivalence2} that the free interface problem \eqref{TW_alpha}-\eqref{interface_conditions} is equivalent to the following one-phase free boundary problem:
\begin{eqnarray}\label{FBP_intro}
\left\{
\begin{array}{ll}
u_{\xi\xi}-cu_{\xi}=-v^{\alpha}, \quad
&0<\xi<R, \\[1mm]
\Lambda v_{\xi\xi}-cv_{\xi}=v^{\alpha}, \quad
&0<\xi<R,
\end{array}
\right.
\end{eqnarray}
with boundary conditions at $\xi=0$
\begin{equation}\label{conditions_zero_intro}
u(0)=\theta,\quad u'(0)=c\theta, \quad v'(0)= -\frac{c}{\Lambda}(1-v(0)),
\end{equation}
and free boundary conditions at $\xi=R$:
\begin{equation}\label{FB_conditions_intro}
v(R)=v'(R)=0, \quad u(R)=1, \quad u'(R)=0.
\end{equation}
The unknowns are: the free boundary $R$; the velocity $c$ which is a kind of eigenvalue of the problem; the functions $u(\xi)$ and $v(\xi)$ which are smooth on the interval $[0,R)$, but whose second derivatives are H\"older continuous at $\xi=R$.
It is immediately apparent that the main quantity in \eqref{FBP_intro}-\eqref{FB_conditions_intro} is the value $v(0)$ that we henceforth take as a parameter
and denote by $v_0\in(0,1)$. In Section \ref{section_scalar}, we formulate a scalar free boundary problem as follows:
\begin{equation}\label{scalar-v_intro}
\begin{cases}
\Lambda v''(\xi)-cv'(\xi)=v^{\alpha}(\xi),\quad 0<\xi<R,\\
v(0)=v_{0},\,
v'(0)=\displaystyle -\frac{c}{\Lambda}(1-v_{0}),\\
v(R)=v'(R)=0,
\end{cases}
\end{equation}
whose study is the main feature of the paper. There are two main results for problem \eqref{scalar-v_intro}: Theorem \ref{theorem_scalar} is about the existence and uniqueness of a solution $(c(v_0),R(v_0),v(v_0;\xi))$; Corollary \ref{corollary_thm_scalar} is about the continuous dependence upon $v_0$. Their proofs, based on topological methods, occupies a large part of this paper, namely Sections \ref{sect-topological-approach} to \ref{proof_thm_scalar}.

The scheme is as follows: we reformulate problem \eqref{scalar-v_intro} as a shooting problem
\begin{equation}\label{IVP-intro}
\begin{cases}
\Lambda v''(\xi)-cv'(\xi)=v^{\alpha}(\xi),\quad \xi>0,\\[1mm]
v(0)=v_{0},\,
v'(0)=\displaystyle -\frac{c}{\Lambda}(1-v_{0}),
\end{cases}
\end{equation}
which is equivalent to finding a trajectory  tending towards the origin (a stable manifold at the origin) of a vector field $X_c$, defined in the quadrant $Q=\{x \geqslant0, y \leqslant0\}$, which means looking at a solution $(x(t),y(t))$ of its differential equation:
\begin{eqnarray}\label{dynamical_syst_intro}
X_c~:
\left\{
\begin{array}{ll}
x'(t)=y(t), \\[2mm]
\Lambda y'(t)=cy(t)+ x^{\alpha}(t),
\end{array}
\right.
\end{eqnarray}
with initial conditions
\begin{equation}\label{IC_intro}
x(0)=v_0, \quad y(0) =\displaystyle -\frac{c}{\Lambda}(1-v_{0}).
\end{equation}
Section \ref{sect-topological-approach} is devoted to the topological study of \eqref{dynamical_syst_intro} for a fixed value of $c$ considered as a parameter, in the first place the phase portrait (see Subsection \ref{subsect-phase-portrait}).  As the vector field $X_c$ is not $C^1$ and the origin is not an hyperbolic singularity when $\alpha <1$, it is impossible to apply Hartman-Grobman Theorem; however, the result stated in Proposition \ref{prop-phase-portrait} can be compared with Hartman-Grobman. The proof of Proposition \ref{prop-phase-portrait} is deferred to Appendix B. In the following Subsections \ref{subsect-local-existence} to \ref{subsect-unicity}, we prove the existence and uniqueness of the stable manifold $y_c(x)$ of \eqref{dynamical_syst_intro}. We use primarily  the  Poincar\'e-Bendixson Theorem (see a brief introduction in Appendix A), which places the origin at the boundary of the domain (refer to \cite{RR} for a comprehensive study).

Section \ref{technical} is devoted to a series of technical lemmata, which enlightens the dependence of the stable manifold $y_c$
upon parameter $c$. In particular,  we give an asymptotic expression of $y_c$ in Subsection \ref{asymptotic}, which eventually provides the optimal H\"older regularity of $v(\xi)$ (see Lemma \ref{holder}).

In Section \ref{proof_thm_scalar}, we proceed to the proof of Theorem \ref{theorem_scalar}, per se. At fixed $v_0$, we prove in Subsection \ref{initial-th_scalar} the existence of a unique $c(v_0)$, such that
$$y_{c(v_0)}(v_0)=-\frac{c(v_0)}{\Lambda}(1-v_0).$$
In Subsection \ref{estimations_time}, we prove that the settling time $R(v_0)$, i.e. the time to proceed from the initial condition \eqref{IC_intro} to the origin on the stable manifold $y_{c(v_0)}$ is finite and, based on the asymptotic expansion of the stable manifold, we estimate $R(v_0)$:
 \begin{equation}\label{eq-bounds-v0_intro}
 \frac{(2(1+\alpha)\Lambda)^{1/2}}{1-\alpha}v_0^\frac{1-\alpha}{2}<R(v_0)< \frac{2\Lambda^{1/2}A(\alpha)}{1-\alpha}\frac{v_0^\frac{1-\alpha}{2}}{1-v_0}.
 \end{equation}
Not surprisingly, this estimate explodes when $\alpha \to 1$ which is coincident with the result given in \cite{BSN85}. Eventually, we return to the free boundary problem \eqref {scalar-v_intro} and prove Corollary \ref{corollary_thm_scalar}.

In Section \ref{well-posedness}, we revisit the free boundary problem \eqref{FBP_intro}-\eqref{FB_conditions_intro}. For $v_0$ in some interval $I\subset (0,1)$ to be determined, we reformulate \eqref{FBP_intro}-\eqref{FB_conditions_intro} as a fixed point problem for the system \eqref{scalar-v_intro}, i.e.,
\begin{equation}\label{scalar-v-bis_introbis}
\begin{cases}
\Lambda v''-cv'=v^{\alpha},\quad 0<\xi<R,\\[1mm]
v(0)=v_{0},\,
v'(0)=\displaystyle -\frac{c}{\Lambda}(1-v_{0}),\\[1mm]
v(R) = v'(R)=0,
\end{cases}
\end{equation}
coupled with an integral equation which reads:
\begin{align}\label{equiv_intro}
\theta+\Lambda v_0=1-c(1-\Lambda)\int_0^Re^{-cs}v(s)\gap ds \quad
\text{or} \quad \theta+v_0=1-(1-\Lambda)\int_0^Re^{-cs}v'(s)\gap ds.
\end{align}
The interval $I$ depends only on $\Lambda$ and $\theta$, especially whether $\Lambda$ is smaller or larger than $1$ (in other words, the Lewis number is larger or smaller than unity). We prove the existence of a fixed point in Theorems \ref{thm_fixpt_Phi} and \ref{thm_fixpt_Psi}, taking advantage of the continuity of $(c(v_0), R(v_0),v(v_0;\xi))$ w.r.t. $v_0$.

The last section of the paper is devoted to a gamut of special cases. In particular, we consider the situation of a solid combustion where the Lewis number is $+\infty$, i.e. $\Lambda=0$. The latter case is mathematically relevant, because it allows explicit computations that yield the uniqueness of the solution to the free boundary problem. Finally, we consider the limit cases $\alpha \to 1$ and $\alpha \to 0$, which complete the proof of Theorem \ref{FBP_alpha}.

As already mentioned, the paper has two appendices: Appendix A regarding the Poincar\'e-Bendixson Theorem and Appendix B devoted to the proof of Proposition \ref{prop-phase-portrait}.

\section{Equivalence with a one-phase free boundary problem}\label{equivalence2}
For given $0<\alpha<1$ and $\Lambda>0$, let us examine the free interface problem \eqref{TW_alpha}-\eqref{interface_conditions}: it is easy to integrate \eqref{TW_alpha} for $\xi<0$, therefore
\begin{equation*}\label{xi_negative}
\displaystyle u(\xi)=\theta e^{c\xi}, \quad
 v(\xi)=1-(1-v(0))e^{\frac{c}{\Lambda}\xi}, \quad \xi<0.
\end{equation*}
Because we look for $u$ and $v$ continuously differentiable, it comes
\begin{equation*}\label{conditions_zero}
u(0)=\theta,\quad u'(0)=c\theta, \quad v'(0)= -\frac{c}{\Lambda}(1-v(0)),
\end{equation*}
and for the same reason, it follows that $u(\xi)=1$ for all $\xi \geqslant R$. This suggests the following simplification.
\begin{proposition}\label{proposition_simplification}
	Let $0<\alpha<1$ and $\Lambda>0$ be fixed. The free interface problem \eqref{TW_alpha}-\eqref{interface_conditions} is equivalent to the one-phase free boundary problem: find $c>0$, a free boundary $0<R<+\infty$, $u$ and $v$ in $C^{\infty}([0,R))\cap C^1([0,R])$, such that $(c,R,u(\xi),v(\xi))$  verifies
	\begin{eqnarray}\label{free_boundary_problem}
	\left\{
	\begin{array}{ll}
	u_{\xi\xi}-cu_{\xi}=-v^{\alpha}, \quad
	&0<\xi<R, \\[1mm]
	\Lambda v_{\xi\xi}-cv_{\xi}=v^{\alpha}, \quad
	&0<\xi<R,
	\end{array}
	\right.
	\end{eqnarray}
	with the following boundary conditions at $\xi=0$

	\begin{equation}\label{conditions_zero}
	u(0)=\theta,\quad u'(0)=c\theta, \quad 0<v(0)<1, \quad v'(0)= -\frac{c}{\Lambda}(1-v(0)),
	\end{equation}
	and the free boundary conditions at $\xi=R$
	\begin{equation}\label{free_boundary_conditions}
	v(R)=v'(R)=0, \quad u(R)=1, \quad u'(R)=0.
	\end{equation}
	Moreover, the following integral relation holds
	\begin{equation}\label{integral_v}
	c=\int_0^R v^{\alpha}(\xi)\gap d\xi.
	\end{equation}
\end{proposition}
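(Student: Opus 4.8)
The plan is to prove the two implications separately; in both directions the substance is simply to integrate the two \emph{linear}, constant-coefficient ODEs satisfied by $u$ and $v$ on the outer regions $\xi<0$ and $\xi>R$, and then to read off the transmission conditions at $\xi=0$ and $\xi=R$ from the requirement $u,v\in C^1(\R)$ together with the behaviour at $\pm\infty$.

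\emph{Direct implication.} Suppose $(c,R,u,v)$ solves \eqref{TW_alpha}--\eqref{interface_conditions}. On $\xi<0$ the general solutions of $u_{\xi\xi}-cu_\xi=0$ and $\Lambda v_{\xi\xi}-cv_\xi=0$ are $A+Be^{c\xi}$ and $C+De^{c\xi/\Lambda}$; the limits $u(-\infty)=0$, $v(-\infty)=1$ force $A=0$, $C=1$, and continuity at $\xi=0$ with $u(0)=\theta$ gives $B=\theta$, $D=-(1-v(0))$, whence
\[
u(\xi)=\theta e^{c\xi},\qquad v(\xi)=1-(1-v(0))e^{c\xi/\Lambda},\qquad \xi\le 0.
\]
Differentiating and matching $C^1$ at $\xi=0$ produces exactly $u'(0)=c\theta$ and $v'(0)=-\tfrac{c}{\Lambda}(1-v(0))$; since $v$ is the deficient-reactant concentration, which stays in $(0,1)$ for $\xi<R$, one has $0<v(0)<1$. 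On $\xi>R$ one has $v\equiv 0$ and $u_{\xi\xi}-cu_\xi=0$; boundedness of $u$ at $+\infty$ (in fact $u(+\infty)=1$) kills the $e^{c\xi}$ mode, so $u\equiv 1$ on $[R,\infty)$, and matching $C^1$ at $\xi=R$ gives $u(R)=1$, $u'(R)=0$, while $v(R)=v'(R)=0$ is the trailing-interface condition of \eqref{interface_conditions} (the vanishing of $v'(R)$ being moreover forced by $v\equiv 0$ on $\xi>R$). On $(0,R)$ the pair solves \eqref{free_boundary_problem} by hypothesis; as $v>0$ there, $v^\alpha$ is smooth, so a bootstrap of the two equations gives $u,v\in C^\infty([0,R))$, and since $v^\alpha$ extends continuously by $0$ at $\xi=R$ one gets $u,v\in C^1([0,R])$.

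\emph{Converse.} Given a solution $(c,R,u,v)$ of \eqref{free_boundary_problem}--\eqref{free_boundary_conditions}, extend it by the explicit formulas above for $\xi<0$ and by $u\equiv 1$, $v\equiv 0$ for $\xi>R$. A direct computation shows these extensions solve the linear equations of \eqref{TW_alpha} on the two outer regions; the data \eqref{conditions_zero} say precisely that the pieces glue in a $C^1$ fashion at $\xi=0$, and \eqref{free_boundary_conditions} that they glue $C^1$ at $\xi=R$; finally $u(-\infty)=0$, $v(-\infty)=1$, $u(+\infty)=1$, $u(0)=\theta$, $v(R)=0$ all hold by construction. Hence the extended triple solves \eqref{TW_alpha}--\eqref{interface_conditions}, and the two problems are equivalent.

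\emph{Integral relation and remarks.} Integrating the $v$-equation of \eqref{free_boundary_problem} over $(0,R)$ and using \eqref{conditions_zero}--\eqref{free_boundary_conditions} gives $\int_0^R v^\alpha\,d\xi=\Lambda\big(v'(R)-v'(0)\big)-c\big(v(R)-v(0)\big)=-\Lambda v'(0)+cv(0)=c(1-v(0))+cv(0)=c$, which is \eqref{integral_v}; integrating the $u$-equation yields the same identity via $u'(R)-u'(0)-c(u(R)-u(0))=-c$. This proof is elementary — integration of linear ODEs plus $C^1$ matching — and presents no genuine obstacle. The only points that deserve a word of care are the strict positivity $v>0$ on $(0,R)$, which is built into the definition of the trailing interface $R$ as the first zero of $v$ (and which is what makes $v^\alpha$ well defined there and lets the $C^\infty$ bootstrap run), and the strict bounds $0<v(0)<1$: multiplying $\Lambda v''-cv'=v^\alpha$ by the integrating factor $e^{-c\xi/\Lambda}$ and integrating from $\xi$ to $R$, where $v'(R)=0$, shows $v'<0$ on $(0,R)$, so $v$ decreases strictly from $v(0)$ to $v(R)=0$ and, via $v'(0)=-\tfrac{c}{\Lambda}(1-v(0))$, that $v(0)<1$, while positivity on $(0,R)$ then also forces $v(0)>0$.
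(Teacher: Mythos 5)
Your proof is correct and follows essentially the same route as the paper's: explicit integration of the linear ODEs on $\xi<0$ and $\xi>R$ with the conditions at $\pm\infty$, $C^1$ matching at the two interfaces for both implications, and the same integration of the $v$-equation (or equivalently the $u$-equation) over $(0,R)$ for \eqref{integral_v}. The extra monotonicity argument you give for $v'<0$ and $0<v(0)<1$ is a harmless addition that the paper defers to the later analysis of the scalar problem.
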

\begin{proof}
	(i)  Assume there exists $c>0$, $R>0$, $u\in C^1(\R)$, $v\in C^1(\R)$ such that $(c,R,u(\xi),v(\xi))$ verifies the free interface problem \eqref{TW_alpha}-\eqref{interface_conditions}. It is clear that $u$ is smooth on $(-\infty,0)\cup(0,R)\cup(R,+\infty)$, $v$ is smooth on $(-\infty,0)\cup(0,R)$ (recall that $v$ is identically zero on $[R,+\infty)$). Solving \eqref{TW_alpha} for $\xi<0$ and taking \eqref{conditions_infty} and \eqref{interface_conditions} into account, we obtain
	\begin{equation}\label{xi_negative}
	\begin{cases}
	\displaystyle u(\xi)=\theta e^{c\xi}, \quad &\xi<0,\\[1mm]
	v(\xi)=1-(1-v(0))e^{\frac{c}{\Lambda}\xi}, \quad &\xi<0.
	\end{cases}
	\end{equation}
	Thus, \eqref{conditions_zero} follows immediately.
	Next, solving $u_{\xi\xi}-cu_{\xi}=0$ for $\xi>R$ with $u(+\infty)=1$ yields $u(\xi)=1$ for $\xi \geqslant R$. Because $u$ and $v$ are continuously differentiable, the free boundary conditions \eqref{free_boundary_conditions} are verified. Finally,
 $(c,R,u|_{[0,R]},v|_{[0,R]})$  satisfies the free boundary problem \eqref{free_boundary_problem}-\eqref{free_boundary_conditions}. Note that in \eqref{free_boundary_conditions} the conditions $u(R)=1$ and $u'(R)=0$ are equivalent.
	
(ii) Now, assume there exist $c>0$, $R>0$, $u$ and $v$ in $C^{\infty}([0,R))\cap C^1([0,R])$ such that $(c,R,u(\xi),v(\xi))$ is a solution to \eqref{free_boundary_problem}-\eqref{free_boundary_conditions}. Then, we extend  $u$ and $v$, respectively, by $\tilde u$ and $\tilde v$  to the whole line as follows: we define  $\tilde u$ and $\tilde v$ for $\xi<0$ via \eqref{xi_negative}; for $\xi>R$ we set $\tilde{u}\equiv 1$, $\tilde{v}\equiv 0$. By construction, $\tilde u$ and $\tilde v$ are in $C^1(\R)$. Finally, it is easy to check that $(c,R,\tilde{u}(\xi),\tilde{v}(\xi))$ is a solution of  \eqref{TW_alpha}-\eqref{interface_conditions}.

(iii)  We easily obtain formula \eqref{integral_v} by integrating the equation $\Lambda v_{\xi\xi}-cv_{\xi}=v^{\alpha}$ between $0$ and $R$, and taking \eqref{conditions_zero} and \eqref{free_boundary_conditions} into account. Note that $c=R$ when  $\alpha=0$ (see \eqref{TW_zero}).
\end{proof}

\section{The scalar free boundary problem}\label{section_scalar}
It transpires from Proposition \ref{proposition_simplification} that the main parameter of the free boundary problem \eqref{free_boundary_problem}-\eqref{free_boundary_conditions} is $v(0)$. We first focus on the scalar problem for $v$ with free boundary $R$, assuming $v(0)=v_0$ given in the interval $(0,1)$.
Throughout the next sections, we intend to prove the following theorem:

\begin{theorem}\label{theorem_scalar}
Let $0<\alpha<1$, $\Lambda>0$ be fixed. Then, for each $v_0\in(0,1)$, there exists a unique solution $(c(v_0),R(v_0),v(v_0;\xi))$ to the one-phase free boundary problem
 \begin{equation}\label{scalar-v}
 \begin{cases}
 \Lambda v''(\xi)-cv'(\xi)=v^{\alpha}(\xi),\quad 0<\xi<R,\\
v(0)=v_{0},\,
v'(0)=\displaystyle -\frac{c}{\Lambda}(1-v_{0}),\\
v(R)=v'(R)=0,
\end{cases}
\end{equation}
such that $c(v_0)>0$, $0<R(v_0)<+\infty$, $v\in C^{\infty}([0,R))\cap C^{2+[\beta],\beta-[\beta]}([0,R])$, $\beta=\mfrac{2\alpha}{1-\alpha}$. Moreover,
\begin{equation}
v(\xi)>0,\quad v'(\xi)<0, \quad v''(\xi)>0, \quad \forall \xi \in [0,R).
\end{equation}
\end{theorem}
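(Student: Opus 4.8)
The plan is to convert the free boundary problem \eqref{scalar-v} into the shooting/phase-plane problem for the vector field $X_c$ in \eqref{dynamical_syst_intro}–\eqref{IC_intro}, exactly as sketched in the Introduction, and then piece together the results of Sections \ref{sect-topological-approach}–\ref{technical}. Concretely: writing $x(\xi)=v(\xi)$, $y(\xi)=v'(\xi)$, the ODE $\Lambda v''-cv'=v^\alpha$ on $(0,R)$ becomes the autonomous system $x'=y$, $\Lambda y'=cy+x^\alpha$ in the closed quadrant $Q=\{x\ge 0,\ y\le 0\}$, with initial point $(v_0,-\tfrac{c}{\Lambda}(1-v_0))$ on the segment $\{x=v_0,\ y<0\}$. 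The terminal condition $v(R)=v'(R)=0$ says precisely that the trajectory reaches the origin. So a solution of \eqref{scalar-v} is the same thing as a value of $c>0$ for which the forward trajectory of $X_c$ through \eqref{IC_intro} lies on the stable manifold $y_c(x)$ of the origin, together with the (finite) travel time $R$ along it.

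First I would invoke the topological analysis of Section \ref{sect-topological-approach}: for each fixed $c>0$ there is a unique stable manifold $y_c(x)$ at the origin inside $Q$, realized as the graph of a function $x\mapsto y_c(x)$ defined on some interval $(0,x^*(c)]$, with $y_c<0$ on its domain; along it the trajectory approaches $0$ monotonically in $x$ (since $x'=y<0$), and the asymptotic expansion of $y_c$ near $x=0$ from Subsection \ref{asymptotic} gives $y_c(x)\sim -\bigl(\tfrac{2}{(1+\alpha)\Lambda}\bigr)^{1/2}x^{(1+\alpha)/2}$, whence the settling time $\int \frac{dx}{|y_c(x)|}$ converges — this is the finite-$R$ phenomenon. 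Next, at fixed $v_0$, I use Subsection \ref{initial-th_scalar}: the map $c\mapsto y_c(v_0)+\tfrac{c}{\Lambda}(1-v_0)$ is well-defined and continuous for $c$ in the relevant range, it changes sign (as $c\to 0$ the slope condition forces one sign, as $c$ grows the other), so by the intermediate value theorem there is a root $c(v_0)$; uniqueness is the content of Subsection \ref{subsect-unicity} (monotone dependence of $y_c$ in $c$, via the technical lemmata of Section \ref{technical}). This pins down $(c(v_0),R(v_0),v(v_0;\xi))$.

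It remains to record the qualitative and regularity properties. Positivity $v>0$ on $[0,R)$ and $v'<0$ on $[0,R)$ are immediate from the phase portrait: the trajectory stays in the open quadrant $\{x>0,\ y<0\}$ until it hits the origin, and $y=v'<0$ gives strict monotone decrease, so $v(\xi)>v(R)=0$. Then $v''=\tfrac{1}{\Lambda}(cv'+v^\alpha)$; one checks $cv'+v^\alpha>0$ on $[0,R)$ — at $\xi=0$ this is $-\tfrac{c^2}{\Lambda}(1-v_0)+v_0^\alpha$, and in general one argues that along the stable manifold the quantity $cy+x^\alpha$ keeps a sign (it is $\Lambda y'$, and $y$ increases from its negative initial value up to $0$, so $y'>0$, hence $cy+x^\alpha>0$) — giving $v''>0$ on $[0,R)$. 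Smoothness $v\in C^\infty([0,R))$ is bootstrapping from the ODE since $v>0$ there makes $v^\alpha$ smooth; the Hölder regularity $v\in C^{2+[\beta],\beta-[\beta]}([0,R])$ with $\beta=\tfrac{2\alpha}{1-\alpha}$ at the endpoint $\xi=R$ follows from the precise asymptotics of $y_c$ near $x=0$, i.e. $v(\xi)\sim C(R-\xi)^{2/(1-\alpha)}$, which is Lemma \ref{holder}. The main obstacle is the uniqueness of $c(v_0)$: establishing strict monotone dependence of the stable manifold $y_c$ on the parameter $c$ for a non-$C^1$ vector field (so no standard smooth dependence on parameters) is delicate and is exactly what the chain of technical lemmata in Section \ref{technical} and Subsection \ref{subsect-unicity} is designed to handle; everything else is either the Poincaré–Bendixson argument of Section \ref{sect-topological-approach} or routine bootstrapping.
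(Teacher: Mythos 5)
Your proposal follows the paper's own route essentially step for step: the phase-plane reformulation $x=v$, $y=v'$, the existence and uniqueness of the stable manifold $y_c$ of $X_c$ in the quadrant $Q$ (Sections \ref{sect-topological-approach}--\ref{technical}), the intermediate value theorem applied to the strictly increasing map $c\mapsto y_c(v_0)+\frac{c}{\Lambda}(1-v_0)$ to get existence and uniqueness of $c(v_0)$, finiteness of the settling time from the asymptotics $y_c(x)\sim -\bigl(\frac{2}{(1+\alpha)\Lambda}\bigr)^{1/2}x^{\frac{1+\alpha}{2}}$, and the H\"older regularity at $\xi=R$ via Lemma \ref{holder}. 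The one place where your own argument is not sound as written is the convexity: you justify $cy+x^\alpha>0$ along the stable manifold by asserting that $y$ increases monotonically to $0$, hence $y'>0$ --- but that monotonicity \emph{is} the statement $y'>0$ you are trying to establish, so the argument is circular (a priori $y$ could dip below the curve $\{cy+x^\alpha=0\}$ and still tend to $0$). The correct justification is the paper's Lemma \ref{from_below}: $X_c$ is horizontal on the curve $\{y=-\frac{1}{c}x^\alpha\}$ and transverse to it in the downward direction, so a trapping triangle between this curve and the $Ox$-axis together with the uniqueness of the stable manifold forces $y_c(x)>-\frac{1}{c}x^\alpha$ for all $x>0$; equivalently, $v''=\frac{1}{\Lambda}(cv'+v^\alpha)$ never vanishes on $[0,R)$ and is positive near $R$ by the asymptotic expansion, hence positive throughout. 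With that repair the proposal coincides with the paper's proof.
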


\begin{corollary}\label{corollary_thm_scalar}
Let $0<\alpha<1$, $\Lambda>0$ be fixed. Set $I=[a,b]$, $0<a<b<1$. For $v_0 \in I$, let $(c(v_0),R(v_0),v(v_0;\xi))$ denote the solution of system \eqref{scalar-v}. \\
(i) There exists a finite $R_{max}$ depending only upon $\alpha$, $\Lambda$, $a$ and $b$ such that $0<R(v_0)\leqslant R_{max}$ for all $v_0 \in I$. \\
(ii) The mapping $v_0 \mapsto (c(v_0),R(v_0),\widetilde{v}(v_0))$ from $I$ to $(\R^+)^2 \times C^1([0,R_{max}])$ is continuous, where $\widetilde{v}(v_0;\xi)$ is the extension by $0$ of the function $v(v_0;\xi)$ to the interval $[0,R_{max}]$.
\end{corollary}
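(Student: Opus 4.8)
The plan is to assemble the corollary from the quantitative results of Sections~\ref{technical}--\ref{proof_thm_scalar} together with classical continuous-dependence theory for ODEs, the only real subtlety being the behaviour near the two singular endpoints $\xi=0$ (where $X_c$ is non-Lipschitz) and $\xi=R(v_0)$ (the moving free boundary). Part~(i) is immediate: in the upper bound of \eqref{eq-bounds-v0_intro}, the map $v_0\mapsto v_0^{(1-\alpha)/2}/(1-v_0)$ is continuous and strictly increasing on $(0,1)$, so on $I=[a,b]$ it is maximal at $v_0=b$, and I may take $R_{max}=\frac{2\Lambda^{1/2}A(\alpha)}{1-\alpha}\,b^{(1-\alpha)/2}/(1-b)$, which depends only on $\alpha,\Lambda,b$. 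Together with the lower bound in \eqref{eq-bounds-v0_intro} and the relation $c(v_0)=\int_0^{R(v_0)}v^\alpha\,d\xi$ with $0<v\le v_0\le b$ on $[0,R(v_0)]$, one also gets uniform two-sided bounds $0<c_-\le c(v_0)\le c_+$ for $v_0\in I$ (as already present in the estimates of Sections~\ref{technical}--\ref{proof_thm_scalar}); these feed the compactness arguments below.

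For the continuity of $v_0\mapsto c(v_0)$, I would use that $c(v_0)$ is characterised (Subsection~\ref{initial-th_scalar}) as the unique $c$ with $F(c,v_0):=y_c(v_0)+\frac{c}{\Lambda}(1-v_0)=0$, where $y_c$ is the stable manifold of $X_c$. By Section~\ref{technical}, $(c,x)\mapsto y_c(x)$ is continuous, hence so is $F$. Given $v_0^{(n)}\to v_0$ in $I$, the uniform bound $c(v_0^{(n)})\in[c_-,c_+]$ lets me extract a subsequence along which $c(v_0^{(n)})\to\bar c$; passing to the limit in $F(c(v_0^{(n)}),v_0^{(n)})=0$ gives $F(\bar c,v_0)=0$, whence $\bar c=c(v_0)$ by the uniqueness in Theorem~\ref{theorem_scalar}. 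Since every subsequence has a further subsequence converging to $c(v_0)$, the whole sequence converges, which proves continuity.

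Next, for $R(v_0)$ and $\widetilde v(v_0)$: on the stable manifold $x$ decreases monotonically from $v_0$ to $0$ with $\dot x=y_{c(v_0)}(x)<0$, so $R(v_0)=\int_0^{v_0}dx/(-y_{c(v_0)}(x))$, an improper integral that converges because of the asymptotic expansion of $y_c$ at the origin in Subsection~\ref{asymptotic} (of the type $-y_c(x)\sim\kappa\,x^{(1+\alpha)/2}$ with $(1+\alpha)/2<1$, constants uniform for $c\in[c_-,c_+]$). Continuity of $c(v_0)$ and joint continuity of $y_c(x)$ on $\{x>0\}$ give pointwise convergence of the integrands, while the uniform lower bound $-y_c(x)\ge\tfrac12\kappa\,x^{(1+\alpha)/2}$ near $0$ furnishes an integrable majorant, so dominated convergence yields $R(v_0^{(n)})\to R(v_0)$. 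For $\widetilde v(v_0)$ in $C^1([0,R_{max}])$, I write $v(v_0;\cdot)$ as the $x$-component of the trajectory of $X_{c(v_0)}$ through $(v_0,-\tfrac{c(v_0)}{\Lambda}(1-v_0))$ on $[0,R(v_0)]$, extended by $0$. On $[0,R(v_0)-\delta]$, where $X_c$ is smooth in $\{x>0\}$ and the trajectory stays bounded away from $x=0$ uniformly in $n$, classical continuous dependence on parameters and initial data gives convergence of $v(v_0^{(n)};\cdot)$ together with its first derivative, uniformly; on $[R(v_0)+\delta,R_{max}]$ all functions vanish once $n$ is large; and on the strip $[R(v_0)-\delta,R(v_0)+\delta]$ one uses that $v\to0$ and $v'=\dot x\to0$ at the free boundary uniformly in $n$ --- since $v''>0$ makes $|v'|$ nonincreasing in $\xi$ and the asymptotics near the origin of $X_c$ bound $v$ and $|v'|$ by a quantity that is $<\varepsilon$ once $\delta$ is small. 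A three-piece triangle-inequality estimate then gives $\|\widetilde v(v_0^{(n)})-\widetilde v(v_0)\|_{C^1([0,R_{max}])}\to0$.

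The hard part will be the bookkeeping near the moving endpoint $\xi=R(v_0)$: one must transfer the uniform-in-$c$ asymptotics of the stable manifold at the origin of $X_c$ into simultaneous uniform smallness of $v$ and $v'$ near $R(v_0^{(n)})$ for all large $n$, so that the $C^1$ convergence is not spoiled by the motion of the free boundary with $n$. The other structural ingredient on which everything rests is the continuity of $c(v_0)$, itself a consequence of the continuous dependence of the stable manifold $y_c$ on $c$ proved in Section~\ref{technical}.
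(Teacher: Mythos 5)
Your proposal is correct, and it is built from the same ingredients as the paper's proof: the uniform bound \eqref{eq-bounds-v0} for part (i), the characterization of $c(v_0)$ as the unique root of $\psi(v_0,c)=y_c(v_0)+\frac{c}{\Lambda}(1-v_0)$ together with the joint continuity of $(c,x)\mapsto y_c(x)$ (Lemmas \ref{lem-global-yc} and \ref{lem-init-cond}), and the integral formula $R(v_0)=T(v_0,c(v_0))=\int_0^{v_0}ds/|y_{c(v_0)}(s)|$ with the uniform-in-$c$ asymptotics \eqref{eq-lc-trapping} (Lemmas \ref{lem-time-T} and \ref{lem-time-R}). Two points where your route genuinely differs are worth recording. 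First, for the continuity of $c(v_0)$ you use a compactness--subsequence argument resting on the a priori bounds $c_-\leqslant c(v_0)\leqslant c_+$ and uniqueness of the root, whereas the paper runs a direct $\varepsilon$--$\delta$ argument via the Intermediate Value Theorem and the monotonicity of $c\mapsto\psi(v_0,c)$; both are sound. Second, and more substantially, for the $C^1$ continuity of $v_0\mapsto\widetilde v(v_0)$ the paper's Lemma \ref{continuity} glues two continuous functions on the closed pieces $T_1,T_2$ of the rectangle $[0,R_{max}]\times I$ and concludes from joint continuity of $\widetilde v(v_0;\xi)$ --- which as written only yields $C^0$ (uniform) convergence and leaves the derivative implicit; your three-region decomposition, with classical continuous dependence away from the free boundary and uniform smallness of both $v$ and $v'$ on $[R(v_0^{(n)})-\delta,R(v_0^{(n)})]$ extracted from the two-sided bounds \eqref{eq-T-framing} and \eqref{eq-lc-trapping} (e.g. $R-\xi>Kv(\xi)^{\frac{1-\alpha}{2}}$ forces $v(\xi)\leqslant(2\delta/K)^{\frac{2}{1-\alpha}}$ and hence $|v'(\xi)|=|y_c(v(\xi))|\leqslant\kappa v(\xi)^{\frac{1+\alpha}{2}}$ small), actually delivers the $C^1$ statement and is the more complete argument. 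A last small remark in your favour: since $v_0\mapsto v_0^{\frac{1-\alpha}{2}}/(1-v_0)$ is increasing on $(0,1)$, the correct supremum over $I=[a,b]$ of the upper bound in \eqref{eq-bounds-v0} is $\frac{2\Lambda^{1/2}A(\alpha)}{1-\alpha}\frac{b^{\frac{1-\alpha}{2}}}{1-b}$, as you take it; the denominator $1-a$ appearing in the paper's display \eqref{uniform_R} is a slip.
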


\subsection{Formal a priori estimates for $c(v_0)$}\label{formal_estimate}
At the outset, it is easy to establish a priori estimates for $c>0$, assuming that $(c(v_0),R(v_0),v(v_0;\xi))$ is a solution to \eqref{scalar-v}.
Multiplying formally the equation by $v'$ and integrating from $0$ to $R$, using the boundary and free boundary conditions in \eqref{scalar-v}, we arrive at
\begin{equation}\label{upper-c1}
\frac{1}{2}\frac{c^2}{\Lambda}(v_0-1)^2+c\int_0^R (v')^2\, d\xi=\frac{1}{1+\alpha}v_0^{1+\alpha}.
\end{equation}
Thus, we immediately achieve the upper bound $c<c_+$ with
\begin{equation}\label{upper-c2}
c^2_+=\frac{2\Lambda}{1+\alpha}\frac{v_{0}^{1+\alpha}}{(v_{0}-1)^{2}}.
\end{equation}
Likewise, we multiply the equation by $v$.
Integrating
from $0$ to $R$ and using again the boundary and free boundary conditions in \eqref{scalar-v}, we obtain
\begin{equation}
-cv_0(v_0-1)-\Lambda\int_0^R (v')^2\, d\xi + \frac{c}{2}v^2_0=\int_0^R v^{1+\alpha}\, d\xi,
\end{equation}
which together with \eqref{upper-c1} gives the lower bound $c>c_-$ with
\begin{equation}\label{lower-c2}
c^2_-=\frac{2\Lambda}{1+\alpha}v_0^{1+\alpha}.
\end{equation}
Summarizing, we have the following a priori estimates:
	\begin{equation}\label{estimate_c}
	0<c_-(v_0)<c(v _0)<c_+(v_0).
	\end{equation}
These bounds are proved via a different method (see Subsection \ref{estimate_c(v_0)} below).
\subsection{Proof of uniqueness}
Here, we give a direct proof of uniqueness in Theorem \ref{theorem_scalar}. We find an alternate proof in Section \ref{sect-topological-approach} (see Lemma \ref{lem-unicity}).
\begin{lemma}
Let $v_0 \in (0,1)$ be fixed. There exists a unique solution $(c(v_0),R(v_0),v(v_0;\xi))$ to system \eqref{scalar-v}.
\end{lemma}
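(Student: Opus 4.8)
The plan is to prove uniqueness by a monotonicity/comparison argument directly on the scalar problem \eqref{scalar-v}, exploiting the fact that the shooting data at $\xi=0$ depend on $c$ in a controlled way. First I would fix $v_0 \in (0,1)$ and regard $c>0$ as the shooting parameter: for each $c$ in the admissible range $(c_-(v_0), c_+(v_0))$ given by \eqref{estimate_c}, consider the initial value problem \eqref{IVP-intro} with $v(0)=v_0$, $v'(0)=-\frac{c}{\Lambda}(1-v_0)<0$. As long as $v>0$ the equation gives $\Lambda v'' = cv' + v^\alpha$; since $v$ starts decreasing, $v$ stays positive and decreasing until it (possibly) hits zero, and a standard ODE argument together with the sign of the right-hand side shows $v$ is strictly convex while $v>0$, so $v'$ is increasing and $v$ reaches $0$ at some finite or infinite value; the condition that \emph{both} $v(R)=0$ and $v'(R)=0$ hold is the selection criterion that picks out $c(v_0)$. (Finiteness of $R$ and the regularity statements are not needed for uniqueness and are handled later.)

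The heart of the matter is to show this selection criterion is met by at most one $c$. The key step is a strict monotonicity of the trajectory in $c$. I would introduce the energy-type quantity suggested by \eqref{upper-c1}: multiplying $\Lambda v'' - cv' = v^\alpha$ by $v'$ and integrating from $0$ to the first zero $R$ of $v$ gives
\begin{equation}
\frac{\Lambda}{2}v'(R)^2 - \frac{\Lambda}{2}v'(0)^2 - c\int_0^R v'(s)^2\,ds = \frac{1}{1+\alpha}\bigl(v(R)^{1+\alpha}-v_0^{1+\alpha}\bigr).
\end{equation}
Using $v(R)=0$ and $v'(0)=-\frac{c}{\Lambda}(1-v_0)$, the free-boundary condition $v'(R)=0$ is equivalent to
\begin{equation}
\frac{c^2}{2\Lambda}(1-v_0)^2 + c\int_0^R v'(s)^2\,ds = \frac{1}{1+\alpha}v_0^{1+\alpha}.
\end{equation}
Now I would argue that the left-hand side, evaluated along the actual trajectory with first zero $R=R(c)$, is strictly increasing in $c$: the term $\frac{c^2}{2\Lambda}(1-v_0)^2$ is manifestly strictly increasing, and for the integral term one shows, via a comparison principle for the linear-in-$v'$ ODE, that increasing $c$ makes the trajectory $v(\cdot;c)$ decrease faster (larger $|v'|$ pointwise before reaching zero) so that $c\int_0^{R(c)} v'(s)^2\,ds$ is nondecreasing in $c$; combined with the strictly increasing first term this forces the left side to be strictly increasing. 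Since the right side $\frac{1}{1+\alpha}v_0^{1+\alpha}$ is a fixed constant, the equation can hold for at most one value of $c$, which gives uniqueness of $c(v_0)$; uniqueness of $R(v_0)$ and of $v(v_0;\cdot)$ then follows because, once $c$ is fixed, the solution of the IVP \eqref{IVP-intro} up to its first zero is uniquely determined by the equation together with the sign conditions $v>0$, $v'<0$ (the non-Lipschitz nonlinearity $v^\alpha$ causes no loss of forward uniqueness here since $v>0$ on $[0,R)$).

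The main obstacle I anticipate is making rigorous the claim that the integral term $c\int_0^{R(c)} v'(s)^2\,ds$ is monotone in $c$: the domain of integration $[0,R(c)]$ itself varies with $c$, and the trajectory's dependence on $c$ enters both through the equation and through the initial slope $v'(0)=-\frac{c}{\Lambda}(1-v_0)$, which also grows with $c$. Handling this cleanly requires a careful comparison argument — e.g. differentiating the trajectory with respect to $c$ and analyzing the sign of $\partial v/\partial c$ via the variational equation, or alternatively a direct ordering argument showing that for $c_1<c_2$ one has $v(\xi;c_2) < v(\xi;c_1)$ and $|v'(\xi;c_2)| > |v'(\xi;c_1)|$ on the common interval of positivity. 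This is precisely the kind of delicate monotonicity that the paper's topological approach in Sections \ref{sect-topological-approach}–\ref{proof_thm_scalar} is designed to circumvent, so here a short self-contained comparison lemma is the cleanest route; everything else (the energy identity, the sign analysis of $v$, $v'$, $v''$) is routine.
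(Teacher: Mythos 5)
Your overall strategy (compare two candidate solutions and derive a contradiction from an integral identity that both must satisfy) is close in spirit to the paper's, but the identity you choose creates a genuine gap at exactly the point you flag. The paper's proof first establishes the ordering $c_2>c_1\Rightarrow R_2<R_1$ and $v_2(\xi)<v_1(\xi)$ on $(0,R_2)$ (by a Rolle/integration argument on $\varphi=v_1-v_2$), and then feeds this into the identity \eqref{integral_v}, $c_i=\int_0^{R_i}v_i^{\alpha}\,d\xi$. There the two effects of increasing $c$ --- a pointwise smaller integrand and a shorter interval of integration --- push in the \emph{same} direction, so $c_2<c_1$ follows at once and contradicts $c_2>c_1$. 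In your energy identity the analogous term is $c\int_0^{R(c)}(v')^2\,ds$, and the two effects now \emph{oppose} each other: larger $c$ makes $|v'|$ pointwise larger but makes $R(c)$ smaller. The inference ``larger $|v'|$ pointwise before reaching zero, hence the integral is nondecreasing'' is therefore a non sequitur: from $f_2>f_1$ on $[0,R_2]$ with $R_2<R_1$ one cannot compare $\int_0^{R_2}f_2$ with $\int_0^{R_1}f_1$. Since this monotonicity is the heart of your argument and is not established, the proposal does not close as written.

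The gap is repairable. Any solution has $v'<0$ on $[0,R)$ (a critical point with $v>0$ would be a strict local minimum, after which $v$ could never return to $0$), so one may substitute $x=v(\xi)$ and write $\int_0^{R}(v')^2\,d\xi=\int_0^{v_0}|y(x)|\,dx$, where $y(x)$ is the phase-plane graph of the trajectory; the domain $[0,v_0]$ is then common to both solutions, and the ordering $y_2(x)<y_1(x)<0$ follows from the differential inequality $\frac{d}{dx}(y_2-y_1)=\frac{c_2-c_1}{\Lambda}+\frac{x^\alpha}{\Lambda}\bigl(\frac{1}{y_2}-\frac{1}{y_1}\bigr)>0$, exactly as in Lemma \ref{lem-unicity}. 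This yields the needed monotonicity of $c\int_0^{v_0}|y(x)|\,dx$. But note that once you are in the phase plane, this same inequality shows directly that $y_2-y_1$ stays below its negative value at $x=v_0$ as $x\downarrow 0$, so the two trajectories cannot both reach the origin --- which makes the energy identity superfluous. Two smaller points: the claim that $v$ is ``strictly convex while $v>0$ by the sign of the right-hand side'' is unjustified, since $cv'+v^{\alpha}$ has no a priori sign (the paper needs Lemma \ref{from_below} for convexity), though convexity is not actually required for uniqueness; and, like the paper's own proof of this lemma, your argument addresses only uniqueness, existence being deferred to the topological construction.
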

\begin{proof}
	Assume that $(c_i(v_0),R_i(v_0),v_i(v_0;\xi))$, $i=1,2$, are two different solutions to \eqref{scalar-v}, namely
	\begin{equation}\label{scalar-vi}
	\begin{cases}
	\Lambda v_i''(\xi)-c_iv_i'(\xi)=v_i^{\alpha}(\xi),\quad 0<\xi<R_i,\\
	v_i(0)=v_{0},\,
	v_i'(0)=\displaystyle -\frac{c_i}{\Lambda}(1-v_{0}),\\
	v_i(R_i)=v_i'(R_i)=0.
	\end{cases}
	\end{equation}
	Without loss of generality, we may assume that $c_2>c_1$. \\
	(i) First, we prove that
	\begin{equation}\label{inequ2}
	R_2<R_1\quad \text{and}\quad v_2(\xi)< v_1(\xi),\, \forall \xi\in(0,R_2).
	\end{equation}
	Set $\varphi=v_1-v_2$, $\varphi(0)=0$. As $|v'_2(0)|>|v'_1(0)|$, one has
	$\varphi'(0)>0$, thus $\varphi>0$. Assume by contradiction that the two solutions intersect: let $\xi_0\in (0,\bar{R})$, $\bar{R}= \min\{R_1,R_2\}$, be the first point such that $\varphi(\xi_0)=0$, i.e.
	$\varphi(\xi)>0, \forall \xi\in(0,\xi_0)$.
	Noticing that $\varphi\in C^2([0,\bar{R}])$, by Rolle's theorem there exists a $\bar\xi\in(0,\xi_0)$ such that $\varphi'(\bar\xi)=0$.
	From \eqref{scalar-vi}, one has
	\begin{equation}\label{varphi}
	\Lambda \varphi''-c_{1}\varphi'+v'_{2}(c_{2}-c_{1})=v_{1}^{\alpha}-v_{2}^{\alpha}.
	\end{equation}
	Integrating \eqref{varphi} from $0$ to $\bar\xi$ and using $\varphi'(\bar \xi)=\varphi(0)=0$, we obtain
	\begin{equation}\label{integral_RHS}
	-\Lambda\varphi'(0)-c_1\varphi(\bar\xi)+(c_2-c_1)(v_2(\bar\xi)-v_2(0))=\int_0^{\bar\xi} (v_1^{\alpha}-v_2^{\alpha})\, d\xi.
	\end{equation}
	Thanks to $\varphi'(0)>0$, $\varphi(x)>0$ and $v_2'(\xi)<0,\forall \xi\in(0,\bar\xi)$,
	it is found that the left-hand side of \eqref{integral_RHS} is negative while the right-hand side is positive, thus, a contradiction.\\
	(ii) Next, according to formula \eqref{integral_v}, it holds
	\begin{equation}
	c_{1}=\int_{0}^{R_{1}}v_{1}^{\alpha}d\xi\quad \text{and}\quad c_{2}=\int_{0}^{R_{2}}v_{2}^{\alpha}d\xi,
	\end{equation}
	hence from \eqref{inequ2}
	$$c_{2}=\int_{0}^{R_{2}}v_{2}^{\alpha}d\xi < \int_{0}^{R_{2}}v_{1}^{\alpha}d\xi < \int_{0}^{R_{1}}v_{1}^{\alpha}d \xi=c_{1},$$
	which contradicts with the assumption $c_2>c_1$; the uniqueness is thus proved.
\end{proof}

\subsection{Scheme of the proof of Theorem \ref{theorem_scalar}}\label{scheme}
Let $0<\alpha<1$, $\Lambda>0$, $0<v_0<1$ be fixed. For $c>0$, we consider the system
\begin{equation}\label{IVP}
\begin{cases}
\Lambda v''(\xi)-cv'(\xi)=v^{\alpha}(\xi),\quad \xi>0,\\[1mm]
v(0)=v_{0},\,
v'(0)=\displaystyle -\frac{c}{\Lambda}(1-v_{0}),
\end{cases}
\end{equation}
and we look for solution $v$ such that $v(\xi)>0$, $v'(\xi)<0$.
Now, we set:
\begin{equation}\label{notation_syst_dyn}
t=\xi, \quad x(t)=v(\xi), \quad y(t)=v'(\xi).
\end{equation}
Therefore, problem \eqref{IVP} is equivalent to the following differential system in the quadrant $Q=\{x \geqslant0, y \leqslant0\}$
\begin{eqnarray}\label{dynamical_syst}
X_c~:
\left\{
\begin{array}{ll}
x'(t)=y(t), \\[2mm]
\Lambda y'(t)=cy(t)+ x^{\alpha}(t),
\end{array}
\right.
\end{eqnarray}
with initial conditions
\begin{equation}\label{IC}
x(0)=v_0, \quad y(0) =\displaystyle -\frac{c}{\Lambda}(1-v_{0}).
\end{equation}
The origin $O=(0,0)$ is the unique critical point of the vector field $X_c$ given by (\ref{dynamical_syst}). Clearly, Theorem \ref{theorem_scalar} demonstrates the existence of $c>0$ and a trajectory $(x(t),y(t))$ in $Q$, defined on $[0,R)$ with initial conditions $(x(0),y(0))$ given by  \eqref{IC}, which tends towards $O$ for $t\rightarrow R_-$. (It also can be said that $O$ is the $\omega$-limit set of the trajectory; $(x(t),y(t))$ extends continuously on $[0,R]$ by $(x(R),y(R))=(0,0)$.

We prove Theorem \ref{theorem_scalar} by using primarily  the  Poincar\'e-Bendixson theorem (see Appendix A), and, from time to time, by using some  other simple qualitative arguments. As mentioned above, the main difficulty is that the vector field is not of class ${C}^1$ along the axis $\{x=0\}$ when $0<\alpha<1.$ This is overcome by putting the origin at the boundary of the domain.

Section \ref{sect-topological-approach} is devoted to the topological study of the differential equation \eqref{dynamical_syst}, for a fixed value of $c$, especially the phase portrait and the existence of a stable manifold. The dependence on $c$ w.r.t.~$v_0$ is studied in Section \ref{technical}. In Section \ref{proof_thm_scalar}, we approach the proof of Theorem \ref{theorem_scalar}, per se. The critical result (see Lemma \ref{lem-time-T}) is that the settling time $R(v_0)$ to proceed from the initial condition to the origin is finite. Eventually, we return to the notation of the free boundary problem \eqref {scalar-v} and prove Corollary \ref{corollary_thm_scalar}.

\section{ A topological approach ($c$ fixed)}\label{sect-topological-approach}

In this section, the parameter $c \geqslant 0$ is fixed; $\alpha$ is such that $0\leqslant \alpha\leqslant 1$, and we assume $v_0>0$ (note that these hypotheses go beyond the physical framework). We intend to study the flow of the vector field $X_c$ with differential equation \eqref{dynamical_syst}, that reads:
\begin{eqnarray}\label{dynamical_syst2}
\left\{
\begin{array}{ll}
x'=y, \\[2mm]
 y'=\displaystyle\frac{1}{\Lambda}(cy+ x^{\alpha}),
\end{array}
\right.
\end{eqnarray}
in a topological way.
We recall that the vector field $X_c$ is a first order differential operator $y\frac{\partial}{\partial x}+\frac{1}{\Lambda}(cy+ x^{\alpha})\frac{\partial}{\partial y}$. We write $X_c\cdot f$ its  action  on a function $f$:
\begin{equation*}
X_c\cdot f(x,y)=y\frac{\partial f}{\partial x}(x,y)+\frac{1}{\Lambda}(cy+ x^{\alpha})\frac{\partial f}{\partial y}(x,y).
\end{equation*}
This vector field is considered in the quadrant $Q=\{x\geqslant 0, y\leqslant 0\}.$ We denote its flow in $Q$ by $\varphi_c(t,m)$.
(For convenience, some information about  vector fields is given in Appendix A).

In the first subsection, we state a general topological result: the phase portrait of the vector field $X_c$, extended to the whole plane,  is independent of $c$, $\alpha$ and $\lambda$. The proof of this result (given in Appendix B), is based in a crucial way on the existence and uniqueness of the stable manifold. The latter result is established below in Subsections \ref{subsect-local-existence}, \ref{subsect-global-existence} and \ref{subsect-unicity}.

\vskip10pt

\subsection{Phase portrait}\label{subsect-phase-portrait}
We extend the vector field $X_c$ to the whole plane by defining:
\begin{equation}\label{eq-X-extended}
X^E_c(x,y)=y\frac{\partial}{\partial x}+\frac{1}{\Lambda}(cy+{\rm sign}(x)|x|^\alpha)\frac{\partial}{\partial y}.
\end{equation}
\begin{remark}
To have a non-ambiguous definition, even for $\alpha=0$, we consider that
${\rm sign}(0)=0$ (and $\pm 1$ otherwise). When $\alpha=0$, $ X^E_c$ is well defined, but discontinuous along the $Oy$-axis,  outside the origin, which is a singular point.
\end{remark}
As $X^E_c|_Q=X_c,$ this new field is an extension of $X_c.$
It is observed  that
\begin{equation*}
X^E_c(-x,-y)=-X^E_c(x,y).
\end{equation*}
It follows that the orbits for $\{x\leqslant 0\}$ are obtained from the orbits for $\{x\geqslant 0\}$ by using the symmetry $(x,y)\mapsto (-x,-y)$ and next changing the time orientation (see Figure \ref{fig-Phase-Portrait}).  The trajectories of $X^E_c$ cross in a regular way the $Oy$-axis at points different from the origin, and the origin is an equilibrium point.

In this subsection we present a topological description for the vector field $X^E_c$, based on the following definition (see \cite{A},\cite{H},\cite{HS},\cite{DLA}):
\begin{definition}\label{def-phase-portrait}
Two vector fields $X$ and $Y$ defined on the same manifold $M$, are said topologically equivalent if there exists an homeomorphism $H$ of $M$, which sends the orbits of $X$ onto the orbits of $Y$ (preserving time-orientation). To be topologically equivalent is clearly an equivalence relation (in the space of vector fields on $M$). An equivalence class for the topological equivalence  is called a phase portrait.
\end{definition}
That is to say, two vector fields are topologically equivalent if they have the same phase portrait.
We have the following global result whose proof is given in Appendix B:
\begin{proposition}\label{prop-phase-portrait}
The phase  portrait of $X^E_c$ is independent of $c,\alpha$  and even of $\Lambda>0.$ In other words, the general vector field $X^E_c$ is topologically equivalent on $\R^2$ to the linear saddle type vector field $X_0=y\frac{\partial}{\partial x}+x\frac{\partial}{\partial y}$ corresponding to $\Lambda=1,\alpha=1,c=0$.
\end{proposition}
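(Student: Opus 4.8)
The plan is to prove Proposition \ref{prop-phase-portrait} by an explicit construction of a topological equivalence, exploiting the odd symmetry $X^E_c(-x,-y)=-X^E_c(x,y)$ so that it suffices to build the homeomorphism on the closed right half-plane $\{x\geqslant 0\}$ and then extend it by $(x,y)\mapsto(-x,-y)$ with time-reversal on $\{x\leqslant 0\}$. The key structural input is that $X^E_c$ restricted to $Q$ has a unique stable manifold at the origin — call its graph $y=y_c(x)$, established in Subsections \ref{subsect-local-existence}--\ref{subsect-unicity} — together with an unstable separatrix, obtained from the stable manifold of the reversed field (equivalently by the symmetry). First I would describe the qualitative phase portrait of $X^E_c$ in the right half-plane: the nullcline $y=0$ (vertical tangencies) and $cy+x^\alpha=0$ (horizontal tangencies), the sign of $X^E_c\cdot x=y$ and of $X^E_c\cdot y$ in each region cut out by these curves, and check that all orbits behave monotonically in the four sectors, exactly as for the linear saddle $X_0$: one incoming separatrix from the region $\{x>0,y<0\}$, one outgoing separatrix into $\{x>0,y>0\}$, and every other orbit passing from one ``hyperbolic sector'' to the next.

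Next I would set up the homeomorphism sector by sector. The cleanest route is to foliate a neighborhood of the origin (and then all of the half-plane) by the orbits of $X^E_c$ transverse to a chosen cross-section, and to do the same for $X_0$, then match orbits to orbits and time-parametrize consistently. Concretely: pick a small arc $\Sigma$ (say a piece of a circle or of a horizontal segment) crossing the stable separatrix of $X^E_c$ transversally; by the flow-box / tubular-flow argument every orbit in the appropriate sector meets $\Sigma$ exactly once. Do the same for $X_0$ with an arc $\Sigma_0$. Define $H$ on $\Sigma$ by any orientation-preserving homeomorphism onto $\Sigma_0$ sending the separatrix point to the separatrix point, and then transport $H$ along orbits: $H(\varphi_c(t,m)) := \varphi^0(t,H(m))$ for $m\in\Sigma$. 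One must check this is well defined on the whole sector, continuous up to the separatrices and up to the origin (using that the settling/escape times of the separatrices are compatible after a reparametrization — here the finiteness of the settling time $R$, though real for the physical problem, is irrelevant to topological equivalence, which is why $X^E_c\sim X_0$ can still hold), and that the pieces built on adjacent sectors agree along the common separatrices. Finally glue in the origin, $H(O)=O$, and invoke continuity of the flow near a hyperbolic-like singular point to conclude $H$ is a homeomorphism of the half-plane; symmetrize to all of $\R^2$.

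The delicate point — and the one I expect to be the main obstacle — is the regularity of $H$ across the separatrices and at the origin, because $X^E_c$ is neither $C^1$ nor hyperbolic when $\alpha<1$, so Hartman--Grobman is unavailable and the usual linearization arguments do not apply. One cannot simply quote a standard theorem; instead one must verify by hand that the orbit-matching map extends continuously, using the monotonicity of orbits in each sector and quantitative control of how orbits approach the separatrices (an asymptotic description of $y_c$ near $O$, of the type derived in Subsection \ref{asymptotic}, is the natural tool). A secondary technical nuisance is the behavior at infinity: to get an equivalence on all of $\R^2$ rather than just near $O$, one should either compactify (Poincaré disk) and check the orbits of $X^E_c$ and $X_0$ have matching behavior at infinity, or argue directly that every orbit in a given sector is a graph over a full ray and run the cross-section argument globally; the sign analysis of the nullclines makes either version routine once the local picture near $O$ is under control. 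The $\alpha=0$ case needs a separate remark, since $X^E_c$ is then discontinuous on the $Oy$-axis away from the origin, but the orbits still form the same topological pattern and the construction above goes through verbatim on each open half-plane.
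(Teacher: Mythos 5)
Your overall strategy is the paper's: reduce to the half-plane by the odd symmetry, use the uniqueness of the stable manifold, and build the homeomorphism by hand by matching orbits across cross-sections, with continuity at the separatrices and at the origin as the crux. But two points need repair. First, the transport formula $H(\varphi_c(t,m)):=\varphi^0(t,H(m))$ defines a time-preserving conjugacy, not an equivalence, and this cannot work: the settling time of $X^E_c$ along its stable separatrix is finite for $\alpha<1$ while that of $X_0$ is infinite, and more basically the time an orbit of $X^E_c$ spends inside a given sector does not match that of the corresponding orbit of $X_0$, so the map so defined is neither onto the target sector nor continuous up to the separatrix. You acknowledge parenthetically that a reparametrization is needed, but constructing that reparametrization orbit by orbit and proving it continuous is exactly the work to be done; the paper sidesteps time altogether by matching orbits through their exit points on the coordinate axes, i.e.\ by the transition maps $T_x$, $S_x$ of Appendix B, setting $h_x(y)=T_x^{-1}\circ T^0_x(y)$ on $\bar Q^+$ and $h_x(y)=S_x^{-1}\circ S^0_x(y)$ on $\bar Q^-$, so that $H(x,y)=(x,h_x(y))$ preserves the fibration by vertical lines and no time matching is ever required.

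Second, the step you yourself flag as ``the main obstacle'' --- continuity of the orbit-matching map up to the separatrix $L$ and at the origin --- is precisely where the mathematical content lies, and your proposal leaves it as a statement of intent. The paper explicitly warns (citing Bendixson) that having the same singular point, separatrices and sector decomposition does \emph{not} in general imply topological equivalence to a linear saddle, so this cannot be dispatched by observing that ``the orbits form the same topological pattern.'' The actual proof needs: (i) that every orbit in $Q^+$ (resp.\ $Q^-$) is a graph reaching the $Ox$-axis (resp.\ the $Oy$-axis), with the quantitative bounds $0\leqslant T_x(y)\leqslant x$ and $|S_x(y)|\leqslant |y|+\frac{c}{\Lambda}|x|$, which give continuity of the transition maps at the corner $O$ (Lemmas \ref{lem-orbits} and \ref{lem-T-S}); and (ii) the contraction estimate from the uniqueness argument of Lemma \ref{lem-unicity} (the difference of two orbit-graphs over a common interval is monotone), which gives continuity of the \emph{inverse} transition maps along $L$ (Lemma \ref{lem-T(-1)-S(-1)}). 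Your appeal to the asymptotics of $y_c$ near $O$ points in the right direction, but it would still have to be converted into these two-sided continuity statements for the matching map and its inverse before the pieces on $\bar Q^+$ and $\bar Q^-$ can be glued into a homeomorphism of $Q$ and then of $\R^2$.
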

\vskip5pt
\begin{figure}[htp]
\begin{center}
   \includegraphics[scale=0.7]{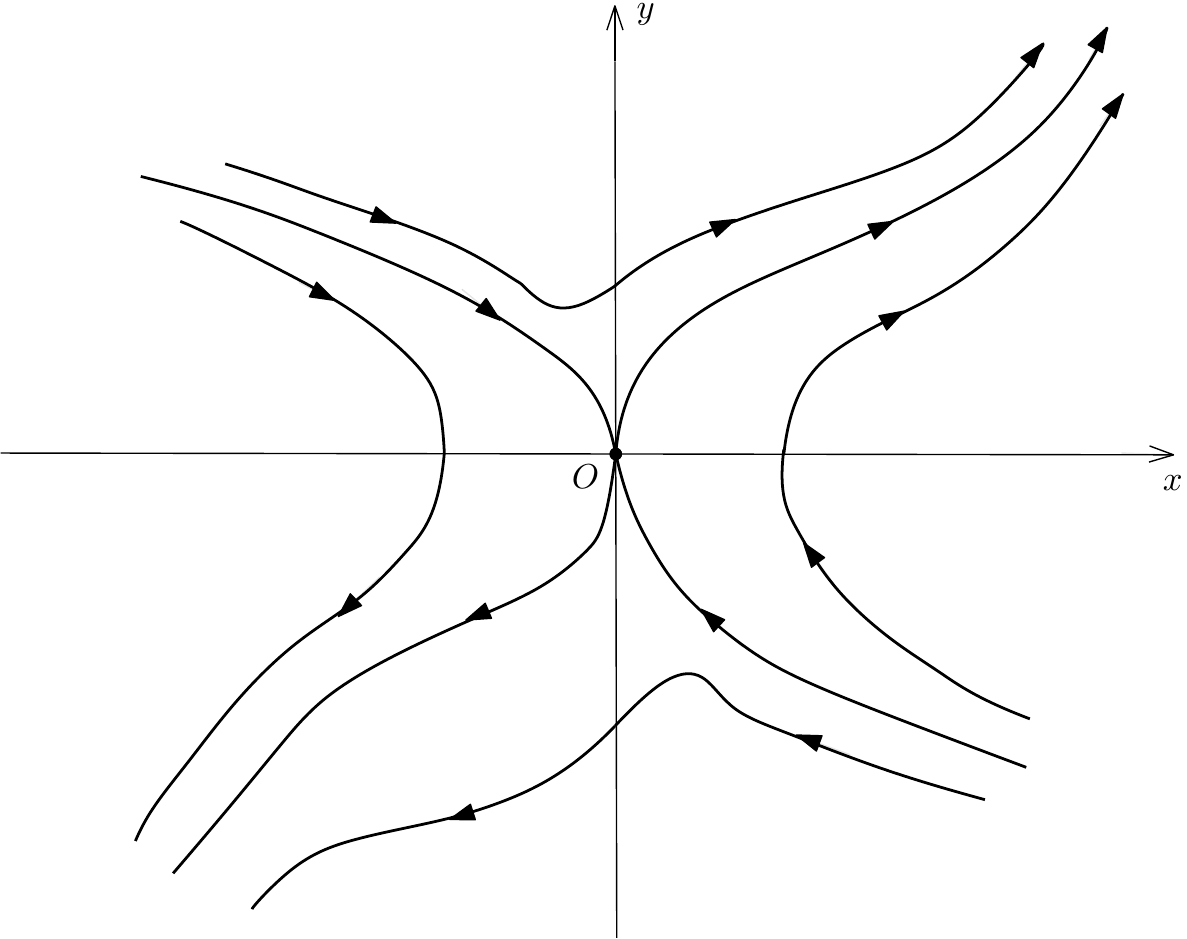}\\
  \caption{Phase portrait of $X^E_c$ for $\alpha <1$. It fulfills some differentiable features of the flow such as the contact of orbits at the origin.}
  \label{fig-Phase-Portrait}
  \end{center}
\end{figure}
\vskip5pt
The result stated in Proposition \ref{prop-phase-portrait} can be compared with the Hartman-Grobman Theorem (see, e.g.\cite{S}). Of course, as the vector field  $X^E_c$ is not $C^1$, and the origin is not an hyperbolic singularity when $\alpha < 1$, it is impossible to apply the Hartman-Grobman Theorem.\footnote{Moreover, the time to proceed from one point on the stable manifold to the origin is infinite for $X_0$ but finite  for $X^E_c$ when $\alpha<1$, see Subsection \ref{estimations_time}.} In Appendix B we construct directly ``by hands'' the homeomorphism $H$ of equivalence, beginning with the construction on quadrant $Q$ on which we consider $X_c$.
Next, $H$ is extended in the whole plane in a rather direct way. The fact that $X_c$ has a unique stable manifold in $Q$ is crucial for this construction.

\subsection{Existence of a local stable manifold}\label{subsect-local-existence}
For $c=0,$ the vector field $X_0$ is Hamiltonian with Hamiltonian function $\displaystyle H(x,y)= \frac{1}{2}y^2-\frac{1}{\Lambda(1+\alpha)}x^{1+\alpha}.$ This vector field has stable manifold  in $Q$ at the origin $O:$
\begin{equation}\label{eq-L0}
L_0 := \Big\{y=y_0(x)=-\Big(\frac{2}{\Lambda(1+\alpha)}\Big)^{1/2}x^{\frac{1+\alpha}{2}}\Big\}.
\end{equation}
For any {$c\geqslant 0$}, we have that:
\begin{equation*}
X_c\cdot H(x,y)=-\frac{1}{\Lambda}yx^\alpha+\frac{1}{\Lambda}(cy+ x^{\alpha})y=\frac{c}{\Lambda}y^2.
\end{equation*}
This implies that,  for $c>0$, the vector field $X_c$ is transverse to $L_0$ and directed downwards all along $L_0,$ outside $O$.

Now, for any $v_0>0,$ we consider in $\R^2$ the region
$\mathcal{T}_{v_0}=\{0\leqslant x \leqslant v_0,\, y_0(x)\leqslant y\leqslant 0\}$.
This compact region is a curved triangle that we call a {\it trapping triangle} for a reason that will become apparent. This triangle has the following three corners: $O,\,A_{v_0}=(v_0,0)$ and $B_{v_0}=(v_0, y_0(v_0))$; and three sides denoted as follows: $[OA_{v_0}],[A_{v_0}B_{v_0}]$ and
$[OB_{v_0}]$. The vector field $X_c$  is transverse  and has an upward direction along $(OA_{v_0}]=[OA_{v_0}]\setminus\{O\}.$ It is transverse and has a left direction along the open interval $(A_{v_0}B_{v_0})=[A_{v_0}B_{v_0}]\setminus \{A_{v_0}\}\cup\{B_{v_0}\}.$ As already mentioned, $X_c$ is transverse and has a downward direction along $(OB_{v_0}]=[OB_{v_0}]\setminus \{O\}$ (see Figure \ref{fig-loc-sta-man}).
\vskip5pt
\begin{figure}[htp]
\begin{center}
   \includegraphics[scale=0.7]{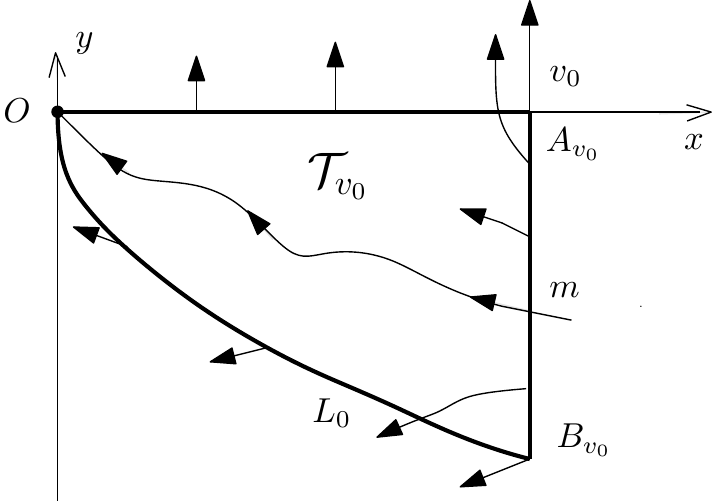}\\
  \caption{Local stable manifold}\label{fig-loc-sta-man}
  \end{center}
\end{figure}
\vskip5pt
The following lemma proves the existence of a local stable manifold contained inside $\mathcal{T}_{v_0}$:
\begin{lemma}\label{lem-exist-local-s-m}
There exist points $m\in (A_{v_0}B_{v_0})$ whose trajectory $\varphi_c(t,m)$ remains in $T_{v_0}$ for all times and tends toward $O$ when $t\rightarrow \tau^+(m)$. Let $S$ be the set of points in $ (A_{v_0}B_{v_0})$ whose trajectory tends toward $O.$ Trajectories starting at  points  $ (A_{v_0}B_{v_0})\setminus S$ cut the side $(OA_{v_0}]$ or the side $(OB_{v_0}]$ after a finite time.
\end{lemma}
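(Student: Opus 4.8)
The idea is a classical shooting argument on the exit set of the trapping triangle $\mathcal{T}_{v_0}$, using the transversality of $X_c$ along its three sides. Parametrize the open side $(A_{v_0}B_{v_0})$ by a real parameter, say $m = (v_0, \eta)$ with $\eta \in (y_0(v_0), 0)$. For each such $m$ the trajectory $\varphi_c(t,m)$ enters the interior of $\mathcal{T}_{v_0}$ for $t>0$ small, since $X_c$ points strictly to the left there. I would partition $(A_{v_0}B_{v_0})$ into three sets: $S^+$, the points whose forward trajectory eventually crosses the side $(OA_{v_0}]$ (where $X_c$ points upward, i.e.\ out of the triangle); $S^-$, those whose forward trajectory eventually crosses $(OB_{v_0}]$ (where $X_c$ points downward, out of the triangle); and $S$, the complement. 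By transversality, once a trajectory touches $(OA_{v_0}]$ or $(OB_{v_0}]$ it genuinely exits $\mathcal{T}_{v_0}$ and cannot return — so $S$ is exactly the set of points whose trajectory stays in $\mathcal{T}_{v_0}$ for all forward time, which is the second assertion of the lemma.

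\textbf{Key steps.} First, show $S^+$ and $S^-$ are each open in $(A_{v_0}B_{v_0})$: this is continuous dependence of solutions on initial data combined with the fact that a transverse crossing of a side is an open condition (the crossing happens at a finite time and persists under small perturbation of $m$). Here I must be slightly careful because $X_c$ is only continuous, not $C^1$, near $\{x=0\}$; but away from the $Oy$-axis the field is smooth, and the exit through $(OA_{v_0}]$ or $(OB_{v_0}]$ takes place at points with $x>0$, so standard ODE continuous dependence applies on the relevant compact time interval. Second, show $S^+$ and $S^-$ are both nonempty: a point $m$ very close to $A_{v_0}$ has $y(0)$ close to $0$, so $x'(0)=y(0)$ is small in magnitude while $y'(0) = \frac{1}{\Lambda}(cy(0)+v_0^\alpha) > 0$ is bounded below, hence the trajectory is quickly pushed up to $(OA_{v_0}]$ — so $S^+ \ne \emptyset$. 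Conversely a point $m$ close to $B_{v_0}$ lies near the curve $L_0$ on which $X_c$ points strictly downward (from the computation $X_c\cdot H = \frac{c}{\Lambda}y^2 > 0$ established just above the lemma, $H$ is strictly increasing along orbits, so an orbit starting just above $L_0$ at $x=v_0$ crosses $L_0$); tracking it, it must leave $\mathcal{T}_{v_0}$ through the bottom side $(OB_{v_0}]$ rather than through the top, giving $S^- \ne \emptyset$. Third, invoke connectedness: $(A_{v_0}B_{v_0})$ is a connected interval, $S^+$ and $S^-$ are nonempty, open, and disjoint, so their union is not all of $(A_{v_0}B_{v_0})$; hence $S = (A_{v_0}B_{v_0}) \setminus (S^+\cup S^-) \ne \emptyset$.

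\textbf{Concluding that $S$-trajectories tend to $O$.} For $m \in S$ the forward trajectory stays in the compact set $\mathcal{T}_{v_0}$, so it is defined for all forward time (or up to its settling time) and has a nonempty $\omega$-limit set $\Omega \subset \mathcal{T}_{v_0}$. Since the only critical point of $X_c$ in $\mathcal{T}_{v_0}$ is $O$, and $\mathcal{T}_{v_0}$ contains no periodic orbit of $X_c$ — inside $\mathcal{T}_{v_0}\setminus\{O\}$ one has $y < 0$ off the edge $[OA_{v_0}]$ so $x' = y < 0$ strictly, forcing $x(t)$ strictly decreasing, which rules out closed orbits — the Poincar\'e--Bendixson theorem forces $\Omega = \{O\}$, i.e.\ $\varphi_c(t,m) \to O$. (One should also note the edge $[OA_{v_0}]$ is crossed transversally inward except at $O$, so an $\omega$-limit set meeting that edge at a non-$O$ point is impossible.) This proves the first assertion. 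The main obstacle is the bookkeeping at the two ends of the interval $(A_{v_0}B_{v_0})$ — rigorously producing one trajectory that exits through the top and one that exits through the bottom — together with keeping the non-$C^1$ nature of $X_c$ on the $y$-axis from interfering; the latter is handled by noting all the exits and crossings used in the argument occur away from $\{x=0\}$, where $X_c$ is smooth and the usual qualitative theory applies verbatim.
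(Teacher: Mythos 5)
Your proposal is correct and follows essentially the same route as the paper: a shooting/connectedness argument on the open side $(A_{v_0}B_{v_0})$, with the two exit sets shown to be open (by transversality and continuous dependence away from $\{x=0\}$) and nonempty (near $A_{v_0}$ and near $B_{v_0}$ respectively), so that their union cannot exhaust the connected interval. One point deserves more care than you give it: in the concluding step you apply the classical Poincar\'e--Bendixson theorem to a half-orbit whose $\omega$-limit set is $\{O\}$, but $X_c$ is not $C^1$ at $O$ when $\alpha<1$, and your own safeguard (``all the exits and crossings occur away from $\{x=0\}$'') does not cover this step --- avoiding exactly this is why the paper works with the Poincar\'e--Bendixson corollary on the singularity-free interior and then uses nested trapping triangles $\mathcal{T}_\varepsilon$ to upgrade accumulation at $O$ to convergence. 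Your argument is nonetheless easily repaired with what you already have: since $x'(t)=y(t)<0$ strictly in the interior, $x(t)$ decreases to some $x_\infty\geqslant 0$; if $x_\infty>0$ the half-orbit has compact closure in $\{x\geqslant x_\infty/2\}$ where $X_c$ is analytic and singularity-free, and monotonicity of $x$ excludes periodic orbits, contradicting Poincar\'e--Bendixson; hence $x_\infty=0$, and since $\mathcal{T}_{v_0}\cap\{x=\varepsilon\}$ has diameter tending to $0$ as $\varepsilon\to 0$ (the triangle pinches to $O$), the trajectory converges to $O$ with no further appeal to Poincar\'e--Bendixson at the origin.
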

\begin{proof}
We consider  any point $m\in (A_{v_0}B_{v_0})$  and look at its trajectory in
$\mathcal{T}_{v_0}$. More precisely, to remain in the interior $\Omega_{v_0}$ of
$\mathcal{T}_{v_0}$, we start at a point $\varphi_c(T_m,m)$ for $T_m>0$ small enough, and we consider the positive  trajectory of this point inside the open set $\Omega_{v_0}.$ The trajectory is defined on a time  interval $[T_m,\tau^+(m))$.

Using Corollary \ref{P-B-corollary}, we know that   there exists a sequence $(t_n)\rightarrow \tau^+(m)$ such that the sequence of points $\varphi_c(t_n,m)$ tends toward a point $p\in\partial \mathcal{T}_{v_0}$. Clearly, $p$ cannot belong to $(A_{v_0}B_{v_0})$, as the vector field is repulsive along this side. If $p\in (OA_{v_0}],$ then it is easy to see, using a flow box centered at $p$, that the trajectory arrives at $p$ at the finite time $\tau^+(m)$   and
crosses the axis $Ox$ at the regular point $p=\varphi(\tau^+(m),m)$.  The same remark is valid if $p\in (OB_{v_0}]$. In this case, the trajectory reaches a regular point on $L_0\setminus \{O\}$ in a finite time.

The set $\mathcal{O}_{up}$ of points $m$ for which the trajectory arrives on $(OA_{v_0}]$ is open in $(A_{v_0}B_{v_0})$, as the transversality of  this trajectory at each end to  $[A_{v_0}B_{v_0}]$ and $Ox$ respectively is an open condition. Moreover, $\mathcal{O}_{up}$ is non-empty, as this set contains points near $A_{v_0}$.  In a similar way, it is seen that the set  $\mathcal{O}_{down}$ of points $m$ for which the trajectory arrives on  $(OB_{v_0}]$ is a non-empty open subset of $(A_{v_0}B_{v_0})$. However, as $(A_{v_0}B_{v_0})$ is connected, it cannot be the union of the two non-empty and disjoint open sets
$\mathcal{O}_{up}$ and $\mathcal{O}_{down}$. This means that there are points $m\in (A_{v_0}B_{v_0})$ for which the limit point $p$ is the origin $O$.

Then, consider such a point $m\in (A_{v_0}B_{v_0})$  with the property that there exists a time sequence $(t_n)\rightarrow \tau^+(m)$ such that $\varphi_c(t_n,m)\rightarrow O$.  This means that for each $\varepsilon>0$ small enough, there exists a
$n(\varepsilon)$ such that $\varphi_c(t_{n(\varepsilon)},m)\in \mathcal{T}_\varepsilon$, where $\mathcal{T}_\varepsilon$ is the small triangle obtained by replacing $v_0$ by $\varepsilon$.  However, as the vertical side of $\mathcal{T}_\varepsilon$ contained in $\{x=\varepsilon\}$ is repulsive (the vector field is directed toward the left) and as the trajectory of $m$ remains in $\mathcal{T}_{v_0}$ by definition (we consider the trajectories only into
$\Omega_{v_0}$), the trajectory of $m$ is trapped in $\mathcal{T}_\varepsilon$; hence, the point $\varphi_c(t,m)$ belongs to  $\mathcal{T}_\varepsilon$ for all
$t\geqslant t_{n(\varepsilon)}.$ As the diameter of
$\mathcal{T}_\varepsilon$ tends to zero with $\varepsilon$, it is implied that
$\varphi_c(t,m)\rightarrow O$ for $t\rightarrow \tau^+(m)$. Therefore, the trajectory of $m$ is a local stable manifold contained in $\mathcal{T}_{v_0}$.
\end{proof}
In the next subsections we use more general trapping triangles:
{\begin{definition}\label{def-trap-triangle}
A trapping triangle $\mathcal{T}_{v_0}$ for the vector field $X_c$ is a curved triangle in $Q$ with a corner at $O$ and two other corners $A_{v_0},B_{v_0}$ on a vertical line $\{x=v_0\}$, for some $v_0>0$.  The side $[OA_{v_0}]$ is contained in  a graph $ L^u  :=\{y=y^u(x)\}$,  the side $[OB_{v_0}]$ in a graph $L^d :=\{y=y^d(x)\}$ and the side $[A_{v_0}B_{v_0}]$ is a vertical interval in  $\{x=v_0\}$.
We assume that $L^u$ is located above $L_d$, i.e. $y^d(x)<y^u(x)\leqslant 0$ for all $x>0$ (and by hypothesis: $y^d(0)=y^u(0)=0$). We also assume that  $X_c$ is transverse in the upward direction all along  $(OA_{v_0}]=[OA_{v_0}]\setminus \{O\}$ and is transverse in the downward direction all along $(OB_{v_0}]$. It is clear that $X_c$ is transverse to $(A_{v_0},B_{v_0}]$ in the left direction (see Figure \ref{fig-trap-tri}).
\end{definition}
\vskip5pt
\begin{figure}[htp]
\begin{center}
   \includegraphics[scale=0.7]{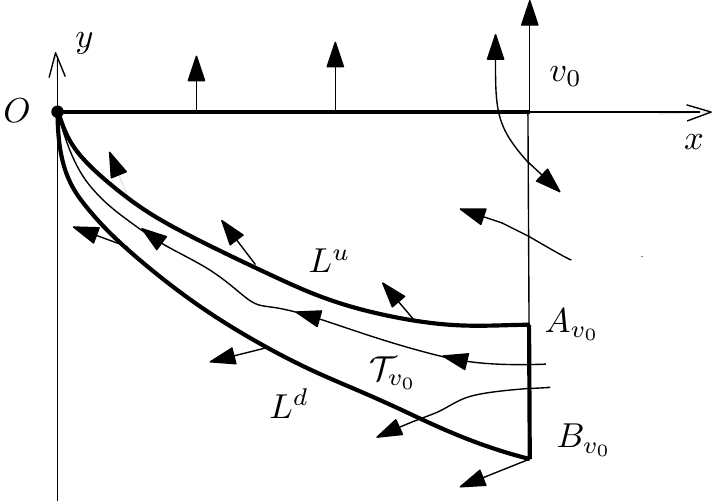}\\
  \caption{\it Trapping triangle}\label{fig-trap-tri}
  \end{center}
\end{figure}
\vskip5pt
The triangle used in the proof of Lemma \ref{lem-exist-local-s-m} is obviously a trapping triangle.   Any trapping triangle  verifies the statement of  Lemma \ref{lem-exist-local-s-m}. As the proof is exactly the same, we state the result  without giving a new proof:

\begin{lemma}\label{lem-general-trap-triangle}
Let  $\mathcal{T}_{v_0}$ be a trapping triangle as in Definition \ref{def-trap-triangle}.
There exist  points $m\in (A_{v_0}B_{v_0})$ whose trajectory $\varphi_c(t,m)$ remains into $\mathcal{T}_{v_0}$ for all times and tends toward $O$ for $t\rightarrow \tau^+(m)$. Trajectories starting at other points of $ (A_{v_0}B_{v_0})$ cut the side $(OA_{v_0}]$ or the side $(OB_{v_0}]$ after a finite time.
\end{lemma}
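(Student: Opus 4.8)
The plan is to re-run the proof of Lemma \ref{lem-exist-local-s-m} essentially word for word, after observing that it never used the particular choices $L^u=\{y=0\}$ and $L^d=L_0$ made there, but only the abstract data recorded in Definition \ref{def-trap-triangle}: (a) $O$ is the unique critical point of $X_c$ in $Q\supset\mathcal{T}_{v_0}$ and is a corner of $\mathcal{T}_{v_0}$; (b) $X_c$ is transverse to $(OA_{v_0}]$ pointing out of $\mathcal{T}_{v_0}$ (upward) and transverse to $(OB_{v_0}]$ pointing out (downward), so that both sides consist of regular points away from $O$; (c) $X_c$ is transverse to $(A_{v_0}B_{v_0})$ pointing left, into $\mathcal{T}_{v_0}$, so that this side is repulsive for the forward flow; (d) $\mathcal{T}_{v_0}$ is compact; and (e) $y^u(0)=y^d(0)=0$, so that the shrunk triangles $\mathcal{T}_\varepsilon$ (obtained by replacing $v_0$ by $\varepsilon$) have diameter tending to $0$. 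Each of these is part of the hypotheses.

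Concretely, I would take $m\in(A_{v_0}B_{v_0})$, push it slightly into the interior $\Omega_{v_0}$ of $\mathcal{T}_{v_0}$, and follow its forward orbit there on a maximal interval $[T_m,\tau^+(m))$. This orbit stays in the compact $\mathcal{T}_{v_0}$, and $\Omega_{v_0}$ contains no critical point (and hence no periodic orbit, by the standard planar index argument); thus Corollary \ref{P-B-corollary} produces, exactly as in the proof of Lemma \ref{lem-exist-local-s-m}, a sequence $t_n\to\tau^+(m)$ with $\varphi_c(t_n,m)\to p\in\partial\mathcal{T}_{v_0}$. A transverse-section argument at $p$ excludes $p\in(A_{v_0}B_{v_0})$, since the flow crosses that side to the left and an interior orbit cannot accumulate on it; a flow box at $p$ shows that if $p\in(OA_{v_0}]$ or $p\in(OB_{v_0}]$ then the orbit meets that side transversally at the finite time $\tau^+(m)$ and leaves $\mathcal{T}_{v_0}$ through it. Hence the only remaining alternative is $p=O$, and then, for each small $\varepsilon$, once the orbit enters $\mathcal{T}_\varepsilon$ it is trapped there, because $y<y^u(x)\le 0$ strictly in $\Omega_{v_0}$ makes $x(t)$ strictly decreasing; letting $\varepsilon\to0$ forces $\varphi_c(t,m)\to O$, and this orbit (which never left) stays in $\mathcal{T}_{v_0}$.

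Finally, write $\mathcal{O}_{up}$ (resp.\ $\mathcal{O}_{down}$) for the set of $m\in(A_{v_0}B_{v_0})$ whose orbit leaves through $(OA_{v_0}]$ (resp.\ through $(OB_{v_0}]$). Both are open, transversality at the exit being an open condition, and both are non-empty, because orbits starting near the corner $A_{v_0}$ (resp.\ near $B_{v_0}$) leave at once through the adjacent curved side by a flow box at that corner, using the transversality of $X_c$ there. Since $(A_{v_0}B_{v_0})$ is connected, it cannot be the union of the two disjoint open sets $\mathcal{O}_{up}$ and $\mathcal{O}_{down}$; hence there remain points $m$ — necessarily with $p=O$ — whose orbit converges to $O$, while every other point's orbit leaves through $(OA_{v_0}]$ or $(OB_{v_0}]$ in finite time, which is the assertion of the lemma. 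I do not anticipate a real obstacle, which is why the paper omits the proof; the only step worth a second look is the non-emptiness of $\mathcal{O}_{up}$ and $\mathcal{O}_{down}$, which in Lemma \ref{lem-exist-local-s-m} relied on $(OA_{v_0}]$ lying on the $x$-axis and $(OB_{v_0}]$ on $L_0$ — but it really uses only a flow box at the corners together with the transversality of $X_c$ to the curved side meeting the vertical side there, both guaranteed by Definition \ref{def-trap-triangle}, so the argument is indeed identical.
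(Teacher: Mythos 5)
Your proposal is correct and matches the paper exactly: the paper itself states that Lemma \ref{lem-general-trap-triangle} has ``exactly the same'' proof as Lemma \ref{lem-exist-local-s-m}, which is precisely the argument you reproduce, including the check that only the abstract transversality and compactness data of Definition \ref{def-trap-triangle} are used. Your extra remark on the non-emptiness of $\mathcal{O}_{up}$ and $\mathcal{O}_{down}$ via flow boxes at the corners is the right point to verify and is handled correctly.
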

\subsection{Existence of global stable manifold}\label{subsect-global-existence}
In this subsection we see how to extend a local stable manifold found in Subsection \ref{subsect-local-existence} into a global one. Such a global stable manifold is a trajectory contained in the quadrant $Q$ and converging toward the origin. We use again Corollary \ref{P-B-corollary}, but now in an easier way, as the vector field $X_c$ is analytic in a neighborhood of the domain under consideration.

\begin{lemma}\label{lem-exist-global-s-m}
There exists a global stable manifold at the origin, given by a trajectory of $X_c$ in $Q$
converging toward the origin in the positive direction of time. The orbit defined by this trajectory (its geometrical image) is the graph of a function $y=y(x)$ with  $x\in (0,+\infty)$.
This function is analytic for $x>0$ and extends continuously at $0$ with the value $y(0)=0$. Its graph is located above $L_0,$ more precisely, $y_0(x)<y(x)<0$ for all $x>0$ ($y=y_0(x)$ is the equation of the curve $L_0$ defined by (\ref{eq-L0})).
\end{lemma}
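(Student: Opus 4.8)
The plan is to construct the global stable manifold as a trajectory in $Q$ converging to $O$, by a continuation/compactness argument building on Lemma \ref{lem-general-trap-triangle}. First I would set up the picture: by Lemma \ref{lem-exist-local-s-m}, the set $S \subset (A_{v_0}B_{v_0})$ of points whose trajectory tends to $O$ inside the original trapping triangle $\mathcal{T}_{v_0}$ is nonempty; pick $m_0 \in S$ and call $\gamma$ its trajectory. Since $X_c$ has no singularity in $Q \setminus \{O\}$ (the only critical point of $X_c$ is the origin) and, by the a priori direction analysis, $x' = y < 0$ and $y' = \frac{1}{\Lambda}(cy + x^\alpha)$ forces the trajectory to stay in the open quadrant with $x$ strictly decreasing, the orbit through $m_0$ is a graph $y = y(x)$ over an interval $(0, x_{\max})$ for some $x_{\max}$: monotonicity of $x(t)$ lets me invert $t \mapsto x(t)$. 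The value $y(0) = 0$ and convergence to $O$ along $\gamma$ is exactly what membership in $S$ gives, so continuity at $0$ is immediate, and analyticity for $x>0$ follows because $X_c$ is analytic away from $\{x=0\}$ (here $\alpha$ enters only through $x^\alpha$, analytic for $x>0$) together with the graph representation via the implicit function theorem / analytic dependence of ODE solutions.

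The substantive step is \emph{globality}: I must show the graph extends to all $x \in (0,+\infty)$, i.e. $x_{\max} = +\infty$. Here I would argue by contradiction. If $x_{\max} < +\infty$, consider the backward trajectory from $m_0$; it is defined as long as it stays in $Q$ and bounded. Using the Hamiltonian identity $X_c \cdot H = \frac{c}{\Lambda} y^2 \geqslant 0$ from Subsection \ref{subsect-local-existence}, $H$ is nondecreasing along forward orbits, hence nonincreasing backward, which — combined with $H(x,y) = \frac12 y^2 - \frac{1}{\Lambda(1+\alpha)} x^{1+\alpha}$ — pins the backward orbit between $L_0$ (where $H=0$) and lines of bounded $H$, giving a priori bounds on $y$ in terms of $x$; so the backward orbit cannot escape to infinity in finite $x$ without $y$ also blowing up, and the field's structure precludes that. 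More precisely, I would build a larger trapping triangle $\mathcal{T}_{v_1}$ with $v_1 > x_{\max}$ whose upper side $L^u$ is, say, $L_0$ itself translated or the $x$-axis appropriately, and whose lower side $L^d$ is $L_0$ — the point is that $X_c$ is transverse inward along the slanted sides (transversality to $L_0$ in the downward direction is already established, transversality to the $x$-axis in the upward direction is clear since $y'= \frac{1}{\Lambda}x^\alpha > 0$ there), so $\mathcal{T}_{v_1}$ is genuinely trapping. Then Lemma \ref{lem-general-trap-triangle} produces a point $m_1 \in (A_{v_1}B_{v_1})$ whose forward trajectory stays in $\mathcal{T}_{v_1}$ and tends to $O$; this trajectory, being a graph over $(0, v_1)$, must by uniqueness of trajectories coincide with $\gamma$ on the overlap — but $\gamma$ was only defined up to $x_{\max} < v_1$, contradiction. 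This gives $x_{\max} = +\infty$.

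Finally, the location estimate $y_0(x) < y(x) < 0$ for all $x>0$: the upper bound $y(x)<0$ holds because the orbit lies in the open quadrant (if $y$ ever hit $0$ at some $x^*>0$ then $x'=0$, $y' = \frac{1}{\Lambda}(x^*)^\alpha > 0$ there, so the trajectory would cross the $x$-axis transversally and leave $Q$, contradicting that it stays in $Q$ and tends to $O$). The lower bound $y_0(x) < y(x)$ comes from the transversality of $X_c$ to $L_0$ in the downward direction established in Subsection \ref{subsect-local-existence}: the trajectory cannot cross $L_0$ from above to below, and near $O$ it lies above $L_0$ (since $H > 0$ slightly off $L_0$ on the $O$-side, or directly from the trapping triangle construction where $\mathcal{T}_{v_0}$ sits above $L_0$), so it stays strictly above $L_0$ for all $x>0$; strictness is preserved because equality $y(x^*) = y_0(x^*)$ at an interior point with $X_c$ pointing strictly downward across $L_0$ would again be a contradiction. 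I expect the continuation-to-infinity step to be the main obstacle, since it requires assembling the right global trapping region and invoking Lemma \ref{lem-general-trap-triangle} together with trajectory uniqueness in $Q \setminus \{O\}$ — uniqueness being delicate precisely because the field fails to be $C^1$ on $\{x=0\}$, but this is sidestepped here since the relevant trajectories avoid $\{x=0\}$ except in the limit at $O$.
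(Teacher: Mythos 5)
Your treatment of the local structure, the graph representation via monotonicity of $x(t)$, analyticity for $x>0$, continuity at $0$, and the location estimate $y_0(x)<y(x)<0$ is fine and consistent with the paper. The gap is in the globality step ($x_{\max}=+\infty$), which is the substance of the lemma. Your contradiction argument produces, from a larger trapping triangle $\mathcal{T}_{v_1}$ with $v_1>x_{\max}$, a second trajectory converging to $O$ that is a graph over $(0,v_1]$, and then asserts it ``must by uniqueness of trajectories coincide with $\gamma$ on the overlap.'' That is not justified at this stage: ODE uniqueness in $\{x>0\}$ identifies two orbits only if they share an actual point, whereas these two orbits are only known to share the \emph{limit point} $O$, which lies on the locus $\{x=0\}$ where $X_c$ fails to be Lipschitz. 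What you are really invoking is uniqueness of the stable manifold, i.e.\ Lemma \ref{lem-unicity}, which is proved \emph{after} this lemma and whose proof itself relies on the graph structure being established here. Your parenthetical claim that the non-Lipschitz difficulty ``is sidestepped here since the relevant trajectories avoid $\{x=0\}$ except in the limit at $O$'' is exactly backwards: the identification you need takes place precisely in that limit. (The step could be repaired by importing the monotone-difference argument of Lemma \ref{lem-unicity} to show two graphs tending to $(0,0)$ must coincide, but you would have to do this explicitly.)

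The first half of your globality discussion is actually the right track and close to the paper's route: the backward orbit from $m_0$ is confined to the strip $\{y_0(x)\leqslant y\leqslant 0\}$, since the forward field crosses the $x$-axis upward and crosses $L_0$ downward (equivalently, $H\leqslant 0$ propagates backward). But confinement alone does not give $x_{\max}=+\infty$: you must also exclude the backward orbit being defined for all negative times with $x(t)$ bounded, i.e.\ stalling at some finite $x_\infty>0$. This is exactly where the paper applies Corollary \ref{P-B-corollary} to $-X_c$ on a chain of compact curved squares $S_n=\{v_n\leqslant x\leqslant v_{n+1},\ y_0(x)\leqslant y\leqslant 0\}$, forcing the backward orbit to leave each $S_n$, necessarily through its right-hand vertical side, and hence to reach every $x>0$; your phrase ``the field's structure precludes that'' does not address this case. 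I would drop the larger-triangle contradiction and complete the backward-continuation argument along these lines. One small sign slip in passing: above $L_0$ one has $H<0$, not $H>0$ (on $L_0$, $H=0$, and $|y|<|y_0(x)|$ above it); this is harmless since you also offer the trapping-triangle justification.
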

\begin{proof}
We start with a local stable manifold found in Lemma \ref{lem-exist-local-s-m}, which is defined by a graph $\{y=y(x)\}$  for $0\leqslant x\leqslant v_0$ where $v_0>0$. As $v_0$ can take any positive value,  the idea might be to make $v_0$ go to $+\infty$.  But the problem is that local stable manifolds may depend on $v_0$. Therefore, we proceed differently to obtain the desired global manifold, using again Corollary \ref{P-B-corollary}.

A value $v_0>0$ being chosen, we consider the sequence $v_n=nv_0$.   For each $n\in \N$,  we consider the curved  square $S_n$ defined
by $S_n=\{(x,y)\  v_n\leqslant x\leqslant v_{n+1},\  y_0(x)\leqslant y\leqslant 0\}$. $S_n$ has four corners: $A_n=(v_n,0), \   B_n=(v_n,y_0(v_n)), \  B_{n+1}=(v_{n+1},y_0(v_{n+1}))$ and $A_{n+1}=(v_{n+1},0)$; and four sides: the horizontal linear segment $[A_nA_{n+1}]$, the two vertical linear segments $[A_nB_n],[A_{n+1} B_{n+1}]$ and the curved segment $[B_n B_{n+1}]$ located on the curve $L_0$. The vector field $X_c$  is analytic and without singularities in a neighborhood of $S_n$.  Moreover, the vector field is transverse along each side, with an entering direction along the side $[A_{n+1} B_{n+1}]$ and a leaving  direction along the three over sides (see  Figure \ref{fig-glob-sta-man}).

Now, applying Corollary \ref{P-B-corollary} in a neighborhood of $S_n$ to the vector field $-X_c,$ we have that if $m_n$ is a point in the interior of the segment  $[A_nB_n]$ (i.e., $m_n=(v_n,y_n)$ with $y_0(v_n)<y_n<0)$, then there exists a $\bar t_n<0$ such that $\varphi(\bar t_n,m_n)$ belongs to the interior of the segment $[A_{n+1} B_{n+1}]$.

  Let $m_0=(v_0,y_0)$ be the  endpoint
of a local stable manifold founded by Lemma \ref{lem-exist-local-s-m} in the trapping triangle $\mathcal{T}_{v_0}.$ We have that $m_0$ belongs to the interior of $[A_0B_0]$. Now, using the above
result as  an induction step in the construction of a global stable manifold, we find a decreasing sequence of times $t_0=0>t_1>t_2>\cdots >t_n\cdots,$ in the time interval of the trajectory $\varphi_c(t,m_0)$ of $X_c,$ such that the segment of trajectory $\varphi_c([t_n,t_{n+1}],m_1)$ connects inside $S_n$ a point $m_n\in (A_nB_n)$ to a point $m_{n+1}\in (A_{n+1}B_{n+1})$.  As the vector field $X_c$ has a norm greater than a positive value if $x\geqslant v_0$, the time sequence $(t_i)$ tends to $-\infty$ (this means that $\tau_-(m_0)=-\infty)$. Now, as the vector field $X_c$ has a negative horizontal component in
the interior $\{x>0,y<0\}$  of the quadrant $Q,$ the orbit of $m_0$ has a regular projection onto the positive axis $Ox^+=\{x>0\}$ (the orbit is a covering of $Ox^+$). As $Ox^+$ is simply connected, this covering must be a diffeomorphism. The orbit is a graph of a function $y=y(x)$ defined for $x>0$. As $X_c$ is analytic, the trajectory is analytic and also the function $y(x)$ for $x>0$. Taking into account the properties of the local stable manifold, we already know that $y(x)$ has a continuous extension at $0$ by $y(0)=0$. Finally, we have by construction that $y_0(x)<y(x)<0$ for all $x>0$.
\end{proof}
\begin{figure}[htp]
\begin{center}
   \includegraphics[scale=0.6]{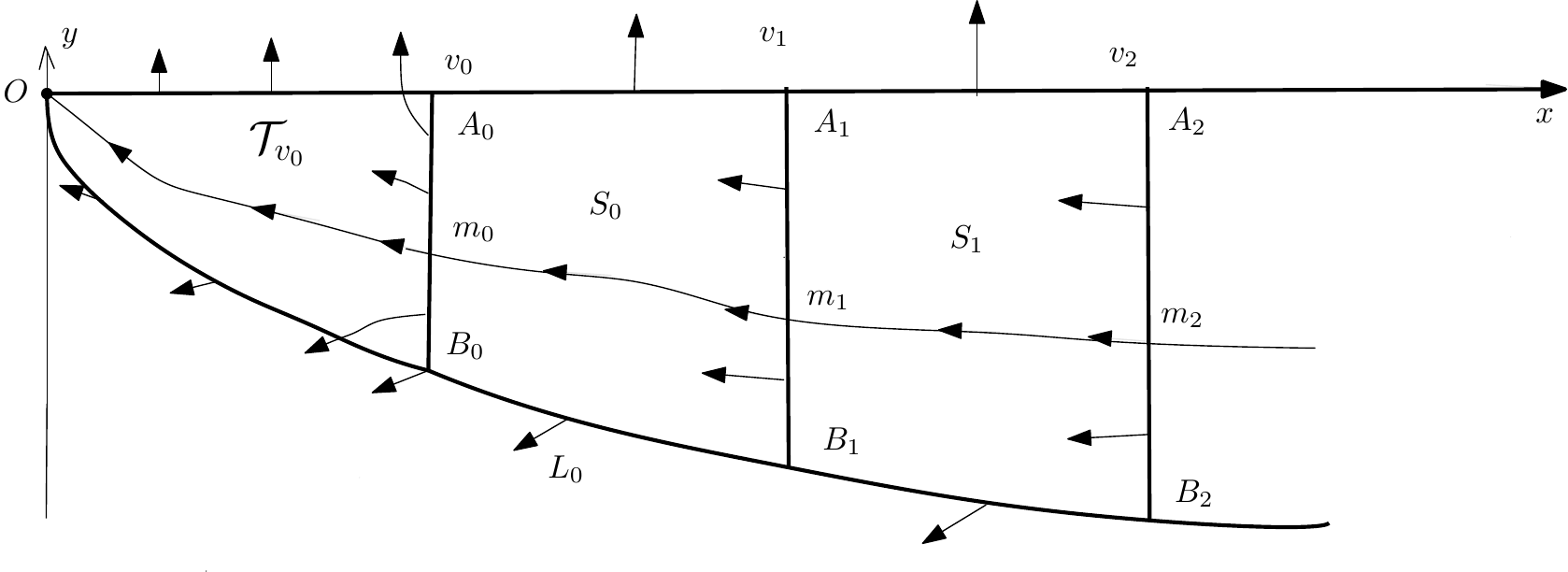}\\
  \caption{\it Construction of a global invariant manifold}\label{fig-glob-sta-man}
  \end{center}
\end{figure}
\subsection{Uniqueness of the stable manifold}\label{subsect-unicity}
Consider any stable manifold in $Q,$ i.e. any trajectory in $Q$ converging toward the origin.  As seen above, such a stable manifold exists, and we want to prove that only one exists.
\begin{lemma}\label{lem-unicity}
There exists a unique trajectory in $Q$ which converges toward the origin in positive times.
\end{lemma}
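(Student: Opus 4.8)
The plan is to prove uniqueness by a monotonicity/comparison argument among trajectories converging to the origin, exploiting the structure of the vector field $X_c$ in the quadrant $Q$. Suppose, for contradiction, that there are two distinct trajectories $\Gamma_1, \Gamma_2 \subset Q$ converging toward $O$ in positive time. By Lemma \ref{lem-exist-global-s-m}, each such trajectory is the graph of an analytic function $y = y_i(x)$ on $(0,+\infty)$, extending continuously by $y_i(0)=0$, and satisfying $y_0(x) < y_i(x) < 0$. Since the two graphs are distinct and both analytic on $(0,+\infty)$, they agree at most on a discrete set; hence on some maximal open interval $(0,a)$ (or an interval abutting a crossing point) one of them, say $y_1$, lies strictly above the other, i.e. $y_2(x) < y_1(x) < 0$ for $0 < x < a$.

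Next I would extract the contradiction from a trapping-triangle argument. Because $X_c$ has negative horizontal component in the interior of $Q$, both trajectories are traversed in the direction of decreasing $x$ as time increases, so near the origin the relevant dynamics is a backward-in-$x$ flow. The key geometric fact is that the region between the two graphs, together with a small vertical segment $\{x = x_0\}$ for $x_0 < a$, forms a curved triangle to which Lemma \ref{lem-general-trap-triangle} applies once one checks the transversality hypotheses of Definition \ref{def-trap-triangle}: the lower boundary $L^d = \Gamma_2$ and the upper boundary $L^u = \Gamma_1$ are themselves trajectories, hence $X_c$ is tangent (not transverse) along them — so instead I would run the argument with the vertical segment as the entering/leaving side and use that \emph{both} side-graphs being orbits forces every trajectory entering through $\{x=x_0\}$ to stay trapped and converge to $O$, which contradicts the dichotomy in Lemma \ref{lem-general-trap-triangle} (there, trajectories starting off the stable set must exit through a non-vertical side after finite time — impossible if those sides are invariant). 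Thus no open gap between two distinct convergent trajectories can exist.

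Alternatively, and perhaps more cleanly, I would argue directly: if $\Gamma_1$ and $\Gamma_2$ are distinct orbits both converging to $O$, pick a point $m_2 \in \Gamma_2$ with $x$-coordinate $x_0$ small, lying strictly between $\Gamma_1$ and $L_0$; the backward orbit of $m_2$ is confined, by the transversality of $X_c$ along $L_0$ (established in Subsection \ref{subsect-local-existence} via $X_c \cdot H = \tfrac{c}{\Lambda} y^2 > 0$) and along $\{x = x_0\}$, to the trapping triangle $\mathcal{T}_{x_0}$ with upper side on $\Gamma_1$. But $\Gamma_1$ is itself an orbit, so $\Gamma_2$ cannot cross it; hence $\Gamma_2 \cap \mathcal{T}_{x_0}$ is a trajectory trapped in $\mathcal{T}_{x_0}$ for all forward time. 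Applying the Poincaré–Bendixson corollary (Corollary \ref{P-B-corollary}) in $\mathcal{T}_{x_0}$, the $\omega$-limit set is a boundary point, which must be $O$; running this for a sequence $x_0 \to 0$ pins the whole backward orbit of $m_2$ onto $\Gamma_1$, forcing $\Gamma_2 = \Gamma_1$.

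The main obstacle I anticipate is handling the possibility that the two graphs $y_1, y_2$ cross infinitely often accumulating at $x=0$ (a priori not excluded for merely continuous extensions, though analyticity on $(0,+\infty)$ rules out accumulation in the interior). This must be dispatched by a local analysis near $O$: using the asymptotic control of trajectories relative to $L_0$ (the invariant curve for $c=0$) together with the sign condition $X_c \cdot H > 0$, one shows any convergent trajectory approaches $O$ tangent to $L_0$ with a definite one-sided ordering, so two such trajectories are comparable on a full punctured neighborhood of $0$ — removing the oscillation scenario and completing the contradiction.
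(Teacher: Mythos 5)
There is a genuine gap: none of your three sketches actually derives a contradiction from the existence of two distinct convergent trajectories. Confinement and trapping arguments can only show that both orbits stay ordered and squeezed between $L_0$ and the $Ox$-axis while tending to $O$ --- but that is perfectly consistent with having two distinct stable trajectories, which is exactly the scenario you must exclude. Your second paragraph invokes the dichotomy of Lemma \ref{lem-general-trap-triangle} for a ``triangle'' whose upper and lower sides are themselves orbits; as you yourself note, the transversality hypotheses of Definition \ref{def-trap-triangle} then fail, so that lemma simply does not apply and no contradiction results. Your third paragraph concludes that the $\omega$-limit of $\Gamma_2$ is $O$ --- which is the hypothesis, not a contradiction --- and the claim that letting $x_0\to 0$ ``pins the whole backward orbit of $m_2$ onto $\Gamma_1$'' does not follow from anything stated. (Your worry about graphs crossing infinitely often is also a non-issue: two distinct orbits cannot meet at any point of the open quadrant, by uniqueness for the Cauchy problem away from $\{y=0\}$, so the ordering $y_1(x)<y_2(x)<0$ holds on all of $(0,v_0]$ without any analyticity argument.)

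The missing ingredient is a quantitative monotonicity statement, and it is exactly what the paper uses. Writing each orbit as a solution of the scalar equation $\frac{dy}{dx}=\frac{c}{\Lambda}+\frac{1}{\Lambda}\frac{x^{\alpha}}{y}$, one gets
\begin{equation*}
\frac{d}{dx}\big(y_2-y_1\big)=\frac{x^{\alpha}}{\Lambda}\Big(\frac{1}{y_2}-\frac{1}{y_1}\Big)<0
\quad\text{whenever } y_1(x)<y_2(x)<0 ,
\end{equation*}
so the gap $y_2-y_1$ is decreasing in $x$, hence $(y_2-y_1)(x)>(y_2-y_1)(v_0)>0$ for all $x\in(0,v_0]$. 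This is incompatible with $(y_2-y_1)(x)\to 0$ as $x\to 0^{+}$, which both continuous extensions force. This ``vertical contraction'' of the flow read in the $x$-direction is the heart of the uniqueness proof; without it (or some equivalent comparison estimate) the topological confinement arguments you propose cannot close.
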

\begin {proof}
Let $\Gamma_1$ and $\Gamma_2$ be two such orbits.
With the same arguments as those used in Lemma \ref{lem-exist-global-s-m}, it is easily seen that there exists a $v_0>0$ such that these orbits are the graphs of two  functions $y_1(x)$ and $y_2(x)$ respectively, defined on the same interval $(0,v_0],$  verifying $y_1(x)<0$ and $y_2(x)<0$ for $0<x\leqslant v_0$ and  with a continuous extension $y_1(0)=y_2(0)=0$.  We want to prove that these two functions coincide on $[0,v_0]$, which implies that $\Gamma_1$ and $\Gamma_2$ also coincide globally.

Assume on the contrary, for instance, that $y_1(v_0)<y_2(v_0)$. As the two graphs are different half-orbits, this inequality persists for any $x\in (0,v_0]$, which means that
$y_1(x)<y_2(x)<0$ for  any $x\in (0,v_0]$. Moreover, each function is a solution of the differential equation $\mfrac{dy}{dx}= \mfrac{c}{\Lambda}+\mfrac{1}{\Lambda}\mfrac{x^\alpha}{y}$. Then, we have the following differential equation:
\begin{equation}\label{aep}
\frac{d}{dx}(y_2-y_1)=\frac{1}{\Lambda}\Big(\frac{1}{y_2}-\frac{1}{y_1}\Big)x^\alpha.
\end{equation}
As $y_1(x)<y_2(x)<0$,  we have that $\mfrac{1}{y_2(x)}-\mfrac{1}{y_1(x)}<0$ for all $0<x\leqslant v_0.$ It follows from \eqref{aep} that the function $x\mapsto (y_2-y_1)(x)$ is decreasing and, in particular:
\begin{equation*}
(y_2-y_1)(x)> (y_2-y_1)(v_0)>0
\end{equation*}
  for  all $x\in (0,v_0].$  This contradicts the fact that $(y_2-y_1)(x)\rightarrow 0$ when $x\rightarrow 0.$
\end{proof}
The unique stable manifold is the one obtained in Lemma \ref{lem-exist-global-s-m}.
 This manifold is the graph of a unique well-defined function we denote by $y_c(x)$. More specifically, we call it {\it the stable manifold of $X_c$ at the origin and write it in the form $\{y=y_c(x)\}$}. We recall that $y_c(x)$ is analytic for $x>0$, continuous at $0$ with value $y_c(0)=0$; it is located in $Q$ above the curve $L_0$, i.e.,  it verifies $y_0(x)<y_c(x)<0$ for all $x>0$. The new definition of $y_0(x)$  as the function $y_c(x)$ for  $c=0$, coincides with the former one.

\section{Some technical lemmata}\label{technical}
In Section \ref{sect-topological-approach}, we defined a $c$-family of analytic functions $y_c(x)$. Now, considering $c\geqslant0$ as a parameter, we study how this family depends on $c$ and prove some useful technical lemmata.

\subsection{Dependence of the stable manifold on parameter $c$}\label{subsect-c-dependence}

The vector field $X_c$ rotates in function of the parameter $c$, in the sense of Duff (see \cite{D}). We take advantage of this property in the following lemma:
\begin{lemma}\label{lem-yc-increasing}
For each $x>0$, the map $c\mapsto y_c(x) $ is increasing.
\end{lemma}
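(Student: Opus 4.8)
The plan is to exploit the rotated-vector-field structure of the family $X_c$ in the sense of Duff, together with the uniqueness of the stable manifold established in Lemma \ref{lem-unicity}, to obtain a strict monotonicity of $c \mapsto y_c(x)$. First I would fix $x>0$ and two parameters $0 \leqslant c_1 < c_2$, and consider the two stable manifolds $\{y = y_{c_1}(x)\}$ and $\{y = y_{c_2}(x)\}$, each analytic on $(0,+\infty)$, continuous at $0$ with value $0$, and lying strictly between $L_0$ and the $Ox$-axis. The idea is to compare the slopes of the two orbits. On the open quadrant $\{x>0,\, y<0\}$, the orbit of $X_c$ that is a graph $y = y(x)$ satisfies the scalar ODE
\begin{equation*}
\frac{dy}{dx} = \frac{c}{\Lambda} + \frac{1}{\Lambda}\frac{x^\alpha}{y},
\end{equation*}
and the right-hand side is \emph{strictly increasing in $c$} for each fixed $(x,y)$ with $y<0$ (it increases by $(c_2-c_1)/\Lambda > 0$). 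This is precisely the statement that $X_c$ rotates monotonically; geometrically, at every point of $Q\setminus\{Oy\cup Ox\}$ the vector $X_{c_2}$ points strictly ``above'' $X_{c_1}$ (larger $dy/dx$), so an orbit of $X_{c_2}$ cannot cross an orbit of $X_{c_1}$ from below to above as $x$ increases.

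Next I would run the standard rotated-field comparison argument. Suppose for contradiction that $y_{c_2}(x_\ast) \leqslant y_{c_1}(x_\ast)$ at some $x_\ast>0$. Since both curves tend to $0$ as $x\to 0^+$ and are distinct trajectories (they belong to different vector fields and cannot coincide on an interval, for a shared arc would force equal slopes hence $c_1=c_2$), I want to show the inequality $y_{c_2}(x) < y_{c_1}(x)$ propagates for all $x$ in the relevant range in a way that contradicts the common limit $0$ at the origin. Concretely: let $w = y_{c_1} - y_{c_2}$; wherever $w(x)=0$ (a crossing point $x_0>0$) we have
\begin{equation*}
w'(x_0) = \frac{dy_{c_1}}{dx}(x_0) - \frac{dy_{c_2}}{dx}(x_0) = \frac{c_1}{\Lambda} + \frac{1}{\Lambda}\frac{x_0^\alpha}{y_{c_1}(x_0)} - \frac{c_2}{\Lambda} - \frac{1}{\Lambda}\frac{x_0^\alpha}{y_{c_2}(x_0)} = \frac{c_1 - c_2}{\Lambda} < 0,
\end{equation*}
because at a crossing the two $x^\alpha/y$ terms cancel. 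Thus $w$ can only cross zero transversally from positive to negative as $x$ increases; equivalently, once $y_{c_2}$ drops below $y_{c_1}$ it stays below, and $y_{c_1}$ can never drop below $y_{c_2}$. Hence if $y_{c_2}(x_\ast)<y_{c_1}(x_\ast)$ for some $x_\ast$, then $y_{c_1}(x) - y_{c_2}(x) \geqslant y_{c_1}(x_\ast) - y_{c_2}(x_\ast) =: \delta > 0$ for all $0 < x \leqslant x_\ast$, contradicting $y_{c_1}(x) - y_{c_2}(x) \to 0$ as $x\to 0^+$. The degenerate case $y_{c_2}(x_\ast) = y_{c_1}(x_\ast)$ at an isolated point is handled by the same sign computation of $w'$ at that point: $w' < 0$ there forces $w>0$ just to the left and $w<0$ just to the right, and then the previous paragraph's argument near $0$ again yields a contradiction; and $w\equiv 0$ on an interval is excluded as noted. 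Therefore $y_{c_1}(x) < y_{c_2}(x)$ for every $x>0$, which is the claim.

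The main obstacle, and the point deserving the most care, is the ``no crossing persists down to the origin'' step: one must rule out the pathological possibility that the difference $w = y_{c_1} - y_{c_2}$ oscillates in sign infinitely often as $x\to 0^+$. This is precisely where the sign computation $w'(x_0) = (c_1-c_2)/\Lambda < 0$ at \emph{every} zero $x_0 > 0$ is decisive: it shows zeros of $w$ are isolated on $(0,+\infty)$ and $w$ changes sign the same way at each, so between consecutive zeros $w$ has a fixed sign pattern and, reading from any crossing back toward $0$, $w$ is bounded away from $0$ on the whole interval to its left — contradicting $w(0^+)=0$ unless there are no zeros at all. I would also need to confirm at the outset that both $y_{c_1}$ and $y_{c_2}$ are indeed defined and negative on a common interval $(0,v_0]$, which follows from Lemma \ref{lem-exist-global-s-m} (each stable manifold is a graph over all of $(0,+\infty)$), and that the strict inequality $y_{c_i}(x) < 0$ holds for $x>0$ so that division by $y_{c_i}$ in the ODE is legitimate; both are already recorded in the excerpt.
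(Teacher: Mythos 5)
Your argument is correct in substance but follows a genuinely different route from the paper's. The paper proves this lemma geometrically: it checks that $X_{c_0}$ is transverse to the stable manifold $L_{c_1}$ of $X_{c_1}$ and points upward along it, builds trapping triangles with $L^u=L_{c_1}$ and $L^d=L_0$, and invokes Lemma \ref{lem-general-trap-triangle} to trap the unique stable manifold of $X_{c_0}$ strictly between $L_0$ and $L_{c_1}$. You instead run a scalar comparison on the graph ODE $\frac{dy}{dx}=\frac{c}{\Lambda}+\frac{1}{\Lambda}\frac{x^\alpha}{y}$, which is essentially the paper's own uniqueness argument (Lemma \ref{lem-unicity}) adapted to two different values of $c$; the extra term $(c_1-c_2)/\Lambda<0$ only reinforces the monotonicity used there. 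Both proofs rest on the same prerequisites (the stable manifolds are negative graphs over $(0,+\infty)$ tending to $0$ at the origin); yours trades the construction of new trapping triangles for an explicit differential inequality.

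One step needs tightening. You attribute the decisive estimate --- that $w=y_{c_1}-y_{c_2}$ stays bounded away from $0$ to the left of a putative crossing --- to the sign computation $w'(x_0)=(c_1-c_2)/\Lambda<0$ at zeros of $w$. That computation only shows the zeros are transversal with a fixed crossing direction, hence that $w>0$ on $(0,x_0)$; it does not by itself exclude $w\to 0$ as $x\to 0^+$ (a positive bump vanishing at both endpoints is perfectly consistent with it), so no contradiction with $w(0^+)=0$ is yet obtained. What closes the argument is the sign of $w'$ on the whole region where $y_{c_2}(x)\leqslant y_{c_1}(x)<0$: there $\frac{1}{y_{c_1}}-\frac{1}{y_{c_2}}\leqslant 0$, hence
\begin{equation*}
w'(x)=\frac{c_1-c_2}{\Lambda}+\frac{x^\alpha}{\Lambda}\Big(\frac{1}{y_{c_1}(x)}-\frac{1}{y_{c_2}(x)}\Big)\leqslant \frac{c_1-c_2}{\Lambda}<0,
\end{equation*}
so $w$ is strictly decreasing wherever $w\geqslant 0$. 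This yields at once that $\{w\geqslant 0\}$ is invariant as $x$ decreases and that $w(x)\geqslant w(x_1)>0$ for all $0<x\leqslant x_1$ as soon as $w(x_1)>0$, which contradicts $w(0^+)=0$. This is exactly inequality \eqref{aep} of the paper augmented by the negative constant $(c_1-c_2)/\Lambda$; with that one line added, your proof is complete.
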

\begin{proof}
We consider two values of the parameter: $c_0<c_1.$  The stable manifolds are
$L_{c_0}:= \{y=y_{c_0}(x)\}$ and $L_{c_1}:= \{y=y_{c_1}(x)\}$, respectively. Take any value $x>0$; then, the vector
\begin{equation*}
 X_{c_1}(x,y_{c_1}(x))=\Big(y_{c_1}(x),\frac{c_1}{\Lambda}y_{c_1}(x)+\frac{1}{\Lambda}x^\alpha\Big)
 \end{equation*}
 is tangent to $L_{c_1}$  at the point $(x,y_{c_1}(x))$ for all $x>0.$  At the same point
 $(x,y_{c_1}(x)),$ the value of the vector field  $X_{c_0}$ is~:
\begin{equation*}
X_{c_0}(x,y_{c_1}(x))=\Big(y_{c_1}(x),\frac{c_0}{\Lambda}y_{c_1}(x)+\frac{1}{\Lambda}x^\alpha\Big).
\end{equation*}
As $\frac{c_0}{\Lambda}y_{c_1}(x)>\frac{c_1}{\Lambda}y_{c_1}(x),$ the vector field
$X_{c_0}$ is, for any $x>0$, transverse to the curve $L_{c_1}$ and is directed upward.
Then,  we construct trapping triangles for any $v_0$  (see Definition \ref{def-trap-triangle}), by using the curves $L^d=L_0$ and  $L^u=L_{c_1}.$ As a consequence of Lemma \ref{lem-general-trap-triangle},  the unique stable manifold for the vector field $X_{c_0}$, i.e. $L_{c_0}$, is located
strictly between $L_0$ and $L_{c_1}$ meaning that, for any $x>0,$ $y_0(x)<y_{c_0}(x)<y_{c_1}(x)$.
\end{proof}

Now look at the continuity of the stable manifold with respect to $c$.
\begin{lemma}\label{lem-yc-continuous}
For each $x>0$, the map $c \mapsto y_c(x) $ is continuous.
\end{lemma}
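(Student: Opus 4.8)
The strategy is to deduce continuity from the monotonicity already established in Lemma \ref{lem-yc-increasing} together with a squeezing argument based on trapping triangles. Fix $x_0>0$ and a parameter value $\bar c\geqslant 0$. Since $c\mapsto y_c(x_0)$ is increasing, the one-sided limits $y_{\bar c^-}(x_0):=\lim_{c\uparrow\bar c}y_c(x_0)$ and $y_{\bar c^+}(x_0):=\lim_{c\downarrow\bar c}y_c(x_0)$ exist and satisfy $y_{\bar c^-}(x_0)\leqslant y_{\bar c}(x_0)\leqslant y_{\bar c^+}(x_0)$. The goal is to show both inequalities are equalities; by the monotone structure it suffices to rule out a jump, i.e. to show $y_{\bar c^-}(x_0)=y_{\bar c^+}(x_0)$ and that this common value is $y_{\bar c}(x_0)$. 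I would argue this by contradiction in two symmetric halves (the left limit and the right limit), using continuous dependence of solutions of the analytic ODE on parameters away from the $y$-axis.

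First I would set up the comparison. Recall from Section \ref{sect-topological-approach} that on $\{x>0\}$ the stable manifold of $X_c$ is the graph of the solution $y_c(x)$ of $\frac{dy}{dx}=\frac{c}{\Lambda}+\frac{1}{\Lambda}\frac{x^\alpha}{y}$, and that $X_c$ is analytic in $(x,y,c)$ on the region $\{x>0,\ y<0\}$. Take a small $v_0>0$ and consider the value $y_{\bar c}(v_0)<0$. For $c$ close to $\bar c$, solve the above ODE \emph{backward from $x=v_0$} with initial value $y_{\bar c}(v_0)$; by the standard continuous-dependence theorem for analytic (hence locally Lipschitz on $\{y<0\}$) vector fields, these solutions converge uniformly on any compact subinterval of $(0,v_0]$ to $y_{\bar c}(\cdot)$ as $c\to\bar c$. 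The delicate point is the behaviour near $x=0$, where the Lipschitz constant blows up; this is exactly where the trapping-triangle machinery is needed to keep the perturbed trajectories pinned to the origin.

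Concretely, to prove right-continuity I would fix $\varepsilon>0$ and, using the curve $L^u:=L_{\bar c+\delta}$ (for small $\delta>0$) above and $L^d:=L_0$ below, build a trapping triangle $\mathcal{T}_{v_0}$ in the sense of Definition \ref{def-trap-triangle}; by Lemma \ref{lem-yc-increasing} the unique stable manifold $L_{\bar c}$ lies strictly between $L_0$ and $L_{\bar c+\delta}$, so $y_{\bar c}(x)<y_{\bar c+\delta}(x)$ and the triangle closes up at $O$. Since the width $y_{\bar c+\delta}(x_0)-y_0(x_0)$ of the admissible band at $x_0$ can be made $<\varepsilon$ once we also know (from the same monotone squeeze applied with $\delta\to 0$) that $y_{\bar c+\delta}(x_0)\to y_{\bar c^+}(x_0)$, the only thing left is to show $y_{\bar c^+}(x_0)=y_{\bar c}(x_0)$, not something strictly larger. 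Here I would invoke Lemma \ref{lem-general-trap-triangle}: the stable manifold of $X_{\bar c}$ must be the unique graph inside the triangle cut out by $L_0$ and $L_{\bar c+\delta}$, and any limit of the graphs $L_{\bar c+\delta}$ as $\delta\downarrow 0$ is again a trajectory of $X_{\bar c}$ (by continuous dependence on the open set $\{x>0,y<0\}$) lying in $Q$ and converging to $O$ — so by the uniqueness statement of Lemma \ref{lem-unicity} it must coincide with $L_{\bar c}$, giving $y_{\bar c^+}(x_0)=y_{\bar c}(x_0)$. The left-continuity is entirely symmetric, using $L^d:=L_{\bar c-\delta}$ and $L^u:=L_{\bar c}$ (or $L_c$ for $c$ slightly above $\bar c$, kept fixed while $\bar c-\delta$ varies).

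The main obstacle I anticipate is the loss of the Lipschitz property of $X_c$ along $\{x=0\}$ when $\alpha<1$: one cannot simply quote continuous dependence up to the endpoint $x=0$, and one must instead argue that the limiting graph still \emph{reaches} the origin rather than, say, detaching from it or developing a corner. This is handled precisely by the trapping-triangle / Poincar\'e--Bendixson argument, which forces any trajectory entering a sufficiently thin triangle $\mathcal{T}_\varepsilon$ to stay trapped there and hence tend to $O$, combined with the uniqueness Lemma \ref{lem-unicity} that identifies the limit with $y_{\bar c}$. Everything else — the existence of one-sided limits, the squeeze, continuous dependence on $\{x>0\}$ — is routine.
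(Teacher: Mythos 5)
Your plan is correct and rests on the same two pillars as the paper's proof --- continuous dependence on $(m,c)$ away from the axis $\{x=0\}$ (Theorem \ref{th-Cauchy}) and the uniqueness of the stable manifold (Lemma \ref{lem-unicity}) --- but the way you identify the one-sided limit differs. The paper argues by contradiction through the dichotomy of Lemma \ref{lem-exist-local-s-m}: if the $X_{\bar c}$-trajectory through the limit point $(x,y^{\pm}_{\bar c}(x))$ were not a local stable manifold it would cross $L_0^+$ or $Ox^+$ at some finite time $t_0$, and then by continuous dependence the trajectories of $X_c$ through $(x,y_c(x))$ for $c$ near $\bar c$ would cross as well, contradicting the fact that these are stable manifolds; uniqueness then forces $y^{\pm}_{\bar c}(x)=y_{\bar c}(x)$. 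You instead identify the monotone limit of the whole graphs directly: it is a trajectory of $X_{\bar c}$ on $\{x>0,\,y<0\}$ (continuous dependence plus a continuation argument, since the limit stays in a compact subset of $\{y<0\}$ over every compact subinterval of $(0,v_0]$), and the squeeze $y_0(x)\leqslant y^{-}_{\bar c}(x)\leqslant y_{\bar c}(x)<0$ (resp.\ $y_{\bar c}(x)\leqslant y^{+}_{\bar c}(x)\leqslant y_{\bar c+\delta_0}(x)<0$) forces this trajectory to tend to $O$, so Lemma \ref{lem-unicity} identifies it with $L_{\bar c}$. Your version avoids the escape/contradiction step and makes the role of monotonicity more transparent; the paper's version avoids having to justify that a pointwise limit of solutions is a solution. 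Two minor slips that do not affect the argument: the sentence claiming the ``width $y_{\bar c+\delta}(x_0)-y_0(x_0)$ can be made $<\varepsilon$'' is not right as written ($y_0(x_0)$ is fixed strictly below $y_{\bar c}(x_0)$), and the auxiliary trapping triangle bounded by $L_0$ and $L_{\bar c+\delta}$ only re-derives Lemma \ref{lem-yc-increasing}; the actual work is carried by the limit-of-graphs claim, which you should spell out via Theorem \ref{th-Cauchy} applied to the converging initial points $(v_0,y_{\bar c\pm\delta}(v_0))$.
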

\begin{proof}
As  the map $c \mapsto y_c(x)$ increases, left- and right-hand limits exist at any $\bar c$, namely $y^-_{\bar c}(x)=\lim_{c\rightarrow {\bar c}_-}y_c(x)$ and $y^+_{\bar c}(x)=\lim_{c\rightarrow {\bar c}_+}y_c(x),$  with the property that $y^-_{\bar c}(x)\leqslant y_{\bar c}(x)\leqslant y^+_{\bar c}(x)$. To prove that the function $y_c$ is continuous at $\bar c$, we must check that $y^-_{\bar c}(x)=y^+_{\bar c}(x).$

We claim that the positive half-orbit of $X_{\bar c}$ by $y^-_{\bar c}(x)$ is a local stable manifold. If this is not the case, it follows from Lemma \ref{lem-exist-local-s-m} that the trajectory must cut (transversally)  $L_0^+=L_0\setminus\{O\}$ or $Ox^+=Ox\setminus\{O\}$ after a finite time $t_0$.
Choosing some $t_1>t_0$, we apply Theorem \ref{th-Cauchy} at  the segment  of trajectory $\varphi([0,t_1],(x,y^-_{\bar c}),\bar c)$.   Theorem \ref{th-Cauchy} implies that the map
$c \mapsto \{t\in [0,t_1]\mapsto \varphi(t,(x,y_c(x)),c)\}$
from  $(\bar c-\delta,\bar c]$ to ${C}^1([0,t_1],\R^2)$ exists and  is continuous at the endpoint $\bar c$ if $\delta$ small enough. Then, the orbit of $X_c$ by the point $(x,y_c(x))$ for $c\in (\bar c-\delta,\bar c)$ and $\delta$ small enough, cuts also  $L_0^+$ or $Ox^+.$ However, this is impossible because $\{y=y_c(x)\}$ is the stable manifold of $X_c.$ Then, the only possibility for the trajectory through the point $(x,y^-_{\bar c}(x))$ is the invariant manifold of $X_{\bar c}.$
A similar argument proves that the trajectory through the point $(x,y^+_{\bar c}(x))$ has also to be the invariant manifold of $X_{\bar c}.$ As the invariant manifold of $X_{\bar c}$ is unique, we obtain $y^-_{\bar c}(x)=y^+_{\bar c}(x).$
\end{proof}
Finally, the family of maps $y_c(x)$ enjoys the following global properties:
\begin{lemma}\label{lem-global-yc}
The mapping $(x,c)\in [0,+\infty)\times  [0,+\infty)\mapsto y_c(x)\in\R$ is continuous as a function of two variables and for all $c$, one has $y_c(0)=0.$ For all $c$, the function $x\mapsto y_c(x)$ is analytic for $x>0$.  Recall that, for all $x>0$, the map $c\mapsto y_c(x)$ increases continuously.
\end{lemma}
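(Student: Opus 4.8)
The plan is to derive the joint continuity from the three pointwise facts already established and to read off the remaining assertions directly from earlier results. Indeed, $y_c(0)=0$ for every $c$ and the analyticity of $x\mapsto y_c(x)$ on $(0,+\infty)$ are part of Lemma \ref{lem-exist-global-s-m} (well-definedness being guaranteed by the uniqueness in Lemma \ref{lem-unicity}), while the increasing and continuous dependence of $c\mapsto y_c(x)$ at each fixed $x>0$ is exactly Lemmas \ref{lem-yc-increasing} and \ref{lem-yc-continuous}. So the only new content is that separate continuity in $x$ and in $c$ must be promoted to continuity in the pair $(x,c)$; the device for this is the monotonicity in $c$.

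First I would fix $\bar c\geqslant 0$, $\bar x\geqslant 0$ and a radius $\delta_0>0$, and set $K:=[\max(0,\bar x-\delta_0),\ \bar x+\delta_0]$, a compact subinterval of $[0,+\infty)$ on which, by Lemma \ref{lem-exist-global-s-m}, every $y_c$ is continuous. For $\eta>0$ and any admissible $c$ with $|c-\bar c|\leqslant\eta$, the monotonicity of $c\mapsto y_c(x)$ (Lemma \ref{lem-yc-increasing}) gives the sandwich $y_{\bar c-\eta}(x)\leqslant y_c(x)\leqslant y_{\bar c+\eta}(x)$ for every $x\in K$ (when $\bar c=0$ only the upper inequality, with $c\leqslant\eta$, is used), hence $|y_c(x)-y_{\bar c}(x)|\leqslant\max\{g^+_\eta(x),g^-_\eta(x)\}$, where $g^\pm_\eta(x):=\pm\big(y_{\bar c\pm\eta}(x)-y_{\bar c}(x)\big)\geqslant 0$ on $K$. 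Along any sequence $\eta_n\downarrow 0$ the sequences $(g^\pm_{\eta_n})_n$ are monotone decreasing in $n$ (again Lemma \ref{lem-yc-increasing}), each $g^\pm_{\eta_n}$ is continuous on the compact set $K$, and $g^\pm_{\eta_n}(x)\to 0$ pointwise by the continuity in $c$ of Lemma \ref{lem-yc-continuous}. Since the pointwise limit $0$ is continuous, Dini's theorem yields $\sup_{x\in K}g^\pm_{\eta_n}(x)\to 0$. As this holds along every sequence $\eta_n\downarrow 0$, we obtain a modulus
\[
\omega(\eta):=\sup_{x\in K}\ \sup_{|c-\bar c|\leqslant\eta}\big|y_c(x)-y_{\bar c}(x)\big|\longrightarrow 0\quad\text{as }\eta\to 0^+ .
\]

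The conclusion is then a one-line estimate: for $(x,c)$ with $x\in K$ and $|c-\bar c|\leqslant\eta$,
\[
|y_c(x)-y_{\bar c}(\bar x)|\ \leqslant\ |y_c(x)-y_{\bar c}(x)|\ +\ |y_{\bar c}(x)-y_{\bar c}(\bar x)|\ \leqslant\ \omega(\eta)+|y_{\bar c}(x)-y_{\bar c}(\bar x)|,
\]
and the last term tends to $0$ as $x\to\bar x$ by the continuity of the single function $y_{\bar c}$ on $K$ from Lemma \ref{lem-exist-global-s-m} (at $\bar x=0$ this uses precisely $y_{\bar c}(0)=0$). Letting first $x\to\bar x$ and then $\eta\to 0^+$ gives continuity of $(x,c)\mapsto y_c(x)$ at $(\bar x,\bar c)$, hence on all of $[0,+\infty)\times[0,+\infty)$; the statements $y_c(0)=0$, analyticity for $x>0$, and monotone continuity in $c$ have been quoted above.

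I expect the only delicate point to be verifying the hypotheses of Dini's theorem — that the auxiliary families $g^\pm_\eta$ are genuinely monotone in $\eta$, continuous on a fixed compact $x$-interval, and converge pointwise to a continuous limit — which all follow cleanly from Lemmas \ref{lem-yc-increasing}, \ref{lem-yc-continuous} and \ref{lem-exist-global-s-m}; after that the argument is just a triangle inequality. An alternative for $x>0$ would be to invoke continuous dependence of ODE solutions on data: write $y_c$ near $\bar x$ as the solution of $\tfrac{dy}{dx}=\tfrac{c}{\Lambda}+\tfrac{1}{\Lambda}\tfrac{x^\alpha}{y}$ issued from a fixed abscissa $x=v_0>\bar x$ with value $y_c(v_0)$, use Lemma \ref{lem-yc-continuous} for continuity of that value and Theorem \ref{th-Cauchy} for the flow, and near $x=0$ squeeze $y_c$ between $L_0$ (see \eqref{eq-L0}) and the $x$-axis via $y_0(x)<y_c(x)<0$ with $y_0(x)\to 0$; but the Dini route avoids any reference to the non-$C^1$ character of $X_c$.
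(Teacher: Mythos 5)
Your proof is correct, but it follows a genuinely different route from the paper's. The paper treats the interior points $x>0$ by introducing the transition map $y=F(x_1,y_1,x,c)$ of the flow between vertical sections, which is analytic by Theorem \ref{th-Cauchy} and the Implicit Function Theorem; writing $y_c(x)=F(x_0,y_c(x_0),x,c)$ and invoking Lemma \ref{lem-yc-continuous} for the single quantity $y_c(x_0)$ then gives joint continuity (and re-derives analyticity in $x$) at once, while continuity at $x=0$ is handled separately by the squeeze $y_0(x)\leqslant y_c(x)<0$, which forces $y_c(x)\to 0$ uniformly in $c$. You instead upgrade the separate continuity to joint continuity via Dini's theorem: the monotonicity of $c\mapsto y_c(x)$ from Lemma \ref{lem-yc-increasing} makes the families $g^\pm_\eta$ monotone and nonnegative, their pointwise decay to $0$ is Lemma \ref{lem-yc-continuous}, and their continuity on the compact $x$-interval is Lemma \ref{lem-exist-global-s-m}; the resulting uniform modulus plus a triangle inequality closes the argument, with $x=0$ absorbed into the same estimate since all $y_c(0)=0$. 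Your route is more elementary (no flow box, no implicit function theorem, no separate treatment of the boundary point) and cleanly isolates exactly which earlier lemmas carry the load; the paper's route buys, as a byproduct, the analyticity of $x\mapsto y_c(x)$ without having to quote Lemma \ref{lem-exist-global-s-m} for it, and indeed local analyticity of the transition in all four variables, which you correctly sidestep by citing the earlier lemma. The only points worth double-checking in your write-up — the one-sided sandwich at $\bar c=0$ and the passage from a sequence $\eta_n\downarrow 0$ to the full limit $\eta\to 0^+$ — are both handled correctly, the latter because $\eta\mapsto\sup_K g^\pm_\eta$ is itself monotone.
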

\begin{proof}
Consider a fixed value $x_0>0$ and write $y_0=y_{c_0}(x_0)$. The trajectory
$X_{c_0}$ by the point $(x_0,y_0)$ cuts transversally the vertical line $d_x$ passing by any $(x,0)$ with $x>0,$ at the point $(x,y_{c_0}(x)).$ Then, if $(x_1,y_1,c)$ is near $(x_0,y_0,c_0)$, the trajectory of
$X_c$ by the point $(x_1,y_1)$  cuts also $d_x$ transversally. This defines a local function $y=F(x_1,y_1,x,c)$ in a neighborhood of the point $(x_0,y_0,x,c)\in \R^4.$ Using Theorem \ref{th-Cauchy} and the Implicit Function Theorem we have $F$ is analytic.
Now, we observe that $y_c(x)=F(x_0,y_c(x_0),x,c)$,  where  $x_0$ is fixed. By Lemma \ref{lem-yc-continuous}, the function $c\mapsto y_c(x_0)$ is continuous. Then,  the composed function $(x,c)\mapsto y_c(x)$ is also continuous.  This proves the continuity at $(x,c)$ for any $x>0$.  As $y_0(x)\leqslant y_c(x)<0$, the function $y_c(x)$ converges uniformly in $c$ toward $0$ when $x\rightarrow 0_+.$

The  function $x\mapsto y_c(x)=F(x_0,y_c(x_0),x,c)$ is analytic as a partial function
of $F.$
\end{proof}
\subsection{An asymptotic expression for $y_c(x)$}\label{asymptotic}
\begin{lemma}
For all $c>0$ and $x>0$, the following holds:
\begin{equation}\label{eq-lc-trapping}
-\Big(\frac{2}{(1+\alpha)\Lambda}\Big)^{1/2}x^\frac{1+\alpha}{2}<y_c(x)<-\Big(\frac{2}{(1+\alpha)\Lambda}\Big)^{1/2}x^\frac{1+\alpha}{2}+\frac{c}{\Lambda}x.
\end{equation}
Then, $y_c$ has the following asymptotic expansion at the origin:
\begin{equation}\label{eq-yc-asympt}
 y_c(x)=-\Big(\frac{2}{(1+\alpha)\Lambda}\Big)^{1/2}x^\frac{1+\alpha}{2}+O(x),
 \end{equation}
 where   the term $O(x)$ is uniform in $c$ when $c$ belongs to some compact subset  of $[0,+\infty)$.
\end{lemma}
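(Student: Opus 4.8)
The plan is to prove the two‑sided estimate (\ref{eq-lc-trapping}) first; the asymptotic expansion (\ref{eq-yc-asympt}) is then immediate. The left inequality $y_0(x)<y_c(x)$ is already contained in Lemma \ref{lem-exist-global-s-m}, so only the right inequality $y_c(x)<y_0(x)+\frac{c}{\Lambda}x$ needs an argument.

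For the right inequality I would invoke the trapping‑triangle machinery of Subsection \ref{subsect-local-existence}. Put $\phi(x):=y_0(x)+\frac{c}{\Lambda}x$ and $L^u:=\{y=\phi(x)\}$. Because $\frac{1+\alpha}{2}<1$, the function $\phi$ has a unique positive zero $x^*$, with $\phi<0$ on $(0,x^*)$ and $\phi\geqslant 0$ on $[x^*,+\infty)$. Fix $x\in(0,x^*)$ and choose $v_0\in(x,x^*)$. Then $L^d:=L_0$ and $L^u$ bound a trapping triangle in the sense of Definition \ref{def-trap-triangle}: transversality of $X_c$ downward along $L_0\setminus\{O\}$ is the computation $X_c\cdot H=\frac{c}{\Lambda}y^2>0$ recalled in Subsection \ref{subsect-local-existence}, while transversality upward along $L^u\setminus\{O\}$ follows from the identity $\Lambda y_0y_0'=x^\alpha$ (immediate from (\ref{eq-L0})), since a one‑line computation gives, along $L^u$,
\[
X_c\cdot\bigl(y-\phi(x)\bigr)=-y\phi'(x)+\tfrac1\Lambda\bigl(cy+x^\alpha\bigr)=-\tfrac{c}{\Lambda}\,x\,y_0'(x)>0,
\]
as $y_0'(x)<0$ for $x>0$. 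By Lemma \ref{lem-general-trap-triangle} this triangle contains a stable manifold, which by uniqueness (Lemma \ref{lem-unicity}) is $\{y=y_c(x)\}$; transversality of $X_c$ to $L_0$ and $L^u$ off the origin then forces $y_0<y_c<\phi$ throughout $(0,v_0)$, in particular at the point $x$. For $x\geqslant x^*$ there is nothing to prove: $y_c(x)<0\leqslant\phi(x)$ by Lemma \ref{lem-exist-global-s-m}. This establishes (\ref{eq-lc-trapping}).

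Finally, (\ref{eq-lc-trapping}) reads $0<y_c(x)-y_0(x)<\frac{c}{\Lambda}x$ for every $x>0$, whence $y_c(x)=y_0(x)+O(x)$ with implied constant $\frac{c}{\Lambda}$; in particular, for $c$ in a compact subset $[0,c_{\max}]$ of $[0,+\infty)$ the remainder is bounded by $\frac{c_{\max}}{\Lambda}\,x$, which is the asserted uniformity. The only point requiring care, and the main (minor) obstacle, is that $L^u$ crosses the $Ox$‑axis at $x^*$ and hence bounds a bona fide trapping triangle only on $(0,x^*)$; this is exactly why the argument is split at $x^*$, the complementary range being disposed of for free by the sign information $y_c<0$. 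Everything else rests on the displayed transversality identity.
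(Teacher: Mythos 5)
Your proof is correct and follows essentially the same route as the paper: the comparison curve $\phi=y_0+\frac{c}{\Lambda}x$ is exactly the paper's $l_c$, the trapping triangle between $L_0$ and $l_c$ with the case split at $x^*=x_1(c)=\big(\tfrac{2\Lambda}{(1+\alpha)c^2}\big)^{1/(1-\alpha)}$ is the paper's argument, and the uniformity of the $O(x)$ term is obtained identically. The only difference is cosmetic: you verify the upward transversality via $X_c\cdot(y-\phi(x))=-\tfrac{c}{\Lambda}xy_0'(x)>0$ using $\Lambda y_0y_0'=x^\alpha$, whereas the paper computes the scalar product of $X_c$ with a rotated tangent vector — the two computations give the same quantity.
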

\begin{proof}We claim that along the curve
\begin{equation}\label{cl}
l_c:= \{y=\tilde y_c(x)=-\Big(\frac{2}{(1+\alpha)\Lambda}\Big)^{1/2}x^\frac{1+\alpha}{2}+\frac{c}{\Lambda}x\},
\end{equation}
the vector field $X_c$ is transverse, with an upward direction. To prove this claim, we consider the field $N_1(x)$ obtained by rotating the tangent field to the graph of $l_c$ at the point  $(x,\tilde y_c(x))$ by the angle $\pi/2$:
$$N_1(x)=\Big(\Big(\frac{1+\alpha}{2\Lambda}\Big)x^\frac{\alpha-1}{2}-\frac{c}{\Lambda}, 1\Big),$$
and we compute its scalar product   with the vector $X_c(x)=X_c(x,\tilde y_c(x))$ which is equal to:
$$X_1(x)=\Big(-\Big(\frac{2}{(1+\alpha)\Lambda}\Big)^{1/2}x^\frac{1+\alpha}{2}+\frac{c}{\Lambda}x,-\frac{c}{\Lambda}\Big(\frac{2}{(1+\alpha)\Lambda}\Big)^{1/2}x^\frac{1+\alpha}{2}+\frac{c^2}{\Lambda^2}x+\frac{1}{\Lambda}x^\alpha\Big).$$
We obtain
$$<X_1(x),N_1(x)> =\frac{c}{\Lambda}\Big(\frac{1+\alpha}{2\Lambda}\Big)^{1/2}x^\frac{1+\alpha}{2},$$
which implies the upward transversality all along $l_c\setminus\{0\}$.

Notice that the curve $l_c$ cuts the $Ox$-axis at the value  $x_1(c)=\Big(\mfrac{2\Lambda}{(1+\alpha)c^2}\Big)^\frac{1}{1-\alpha}>0$ and remains  in the quadrant $Q$ only for $x\in [0,x_1(c)]$. Nevertheless, we can construct trapping triangles, using the curves $L_0$ and $l_c,$ with a vertical side in $\{x=v_0\}$ when $0<v_0\leqslant x_1(c)$.
It follows from Lemma \ref{lem-general-trap-triangle} that $y_c(x)$ verifies (\ref{eq-lc-trapping}) for $0<x\leqslant x_1(x).$
As the graph of $l_c$ is above the $Ox$ axis, (\ref{eq-lc-trapping}) is trivially verified if $x\geqslant x_1(c).$

The asymptotic formula (\ref{eq-yc-asympt}) follows directly from (\ref{eq-lc-trapping}).
\end{proof}

\subsection{Estimate from below}
The following estimate is significant in view of Lemma \ref{convex}.
\begin{lemma}\label{from_below}
The stable manifold  is located above the curve $\{y=-\frac{1}{c}x^{\alpha}\}.$ This means that for all $x>0$ it holds $y_c(x)>-\frac{1}{c}x^{\alpha}$.
\end{lemma}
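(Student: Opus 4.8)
The lemma says precisely that $c\,y_c(x)+x^\alpha>0$ for $x>0$, i.e. that $v''(\xi)=y'(t)=\frac1\Lambda\big(c\,y_c(x)+x^\alpha\big)$ is positive along the trajectory (equivalently, $v$ is convex), which is why it feeds into Lemma~\ref{convex}. The plan is to run a first-contact comparison in the phase plane against the explicit barrier curve $l:=\{y=\psi(x)\}$ with $\psi(x):=-\frac1c x^\alpha$, using that $y=y_c(x)$ solves, for $x>0$, the scalar equation $\frac{dy}{dx}=\frac c\Lambda+\frac1\Lambda\frac{x^\alpha}{y}$ recorded in the proof of Lemma~\ref{lem-unicity}. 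The crucial algebraic fact is that along $l$ the nonlinearity cancels the linear term:
\[
\frac c\Lambda+\frac1\Lambda\cdot\frac{x^\alpha}{\psi(x)}=\frac c\Lambda+\frac1\Lambda\cdot\frac{x^\alpha}{-x^\alpha/c}=0,
\]
equivalently $X_c(x,\psi(x))=(\psi(x),0)$: along $l\setminus\{O\}$ the field $X_c$ is horizontal, pointing in the $-x$ direction. Hence any integral curve of $X_c$ that meets $l$ does so with horizontal tangent, while $l$ itself has strictly negative slope $\psi'(x)=-\frac\alpha c x^{\alpha-1}<0$; this mismatch is what will produce a contradiction.

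First I would check that $y_c(x)>\psi(x)$ for small $x>0$. By Lemma~\ref{lem-exist-global-s-m}, $y_0(x)<y_c(x)<0$ with $y_0$ as in \eqref{eq-L0}, and by the asymptotic expansion \eqref{eq-yc-asympt}, $y_c(x)=-\big(\frac{2}{(1+\alpha)\Lambda}\big)^{1/2}x^{\frac{1+\alpha}2}+O(x)$ near $0$; since $0<\alpha<1$ gives $\frac{1+\alpha}2>\alpha$ and $1>\alpha$, the function $w(x):=y_c(x)-\psi(x)=\frac1c x^\alpha\,(1+o(1))$ is strictly positive on some interval $(0,x_1)$. Now suppose $w$ vanishes somewhere on $(0,+\infty)$ and set $x^\ast:=\inf\{x>0:\ w(x)\le 0\}$; then $x^\ast\in(0,+\infty)$, $w>0$ on $(0,x^\ast)$ and $w(x^\ast)=0$ by continuity, so $w'(x^\ast)\le 0$ (recall $y_c$, hence $w$, is analytic on $(0,+\infty)$). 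But $y_c(x^\ast)=\psi(x^\ast)$, so the cancellation above forces $y_c'(x^\ast)=0$, whence $w'(x^\ast)=y_c'(x^\ast)-\psi'(x^\ast)=\frac\alpha c(x^\ast)^{\alpha-1}>0$ — a contradiction. Therefore $w>0$ throughout $(0,+\infty)$, i.e. $y_c(x)>-\frac1c x^\alpha$ for all $x>0$.

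I do not expect a genuine obstacle. The only point needing a bit of care is the sign of $w$ near the origin, which is handled by \eqref{eq-yc-asympt} (the cruder bound $y_0(x)<y_c(x)$ also suffices, since $|y_0(x)|/|\psi(x)|\to 0$ as $x\to0^+$). Note that the trapping-triangle machinery of Subsection~\ref{subsect-local-existence} does not apply verbatim here, precisely because $X_c$ is only \emph{tangent} to $l$ rather than strictly transverse in the downward direction; this degeneracy is exactly why a direct ODE comparison is the natural tool.
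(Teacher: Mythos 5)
Your proof is correct, and it does take a different route from the paper's. The paper observes that along $l=\{y=-\tfrac1c x^\alpha\}$ the field $X_c$ reduces to $y\,\partial/\partial x$ (horizontal, leftward), hence is transverse to $l$ and directed downward, and then builds a trapping triangle with $L^d=l$ and $L^u=Ox$; Lemma \ref{lem-general-trap-triangle} plus uniqueness of the stable manifold (Lemma \ref{lem-unicity}) then traps $y_c$ above $l$. You instead run a first-contact comparison on the scalar equation $\frac{dy}{dx}=\frac c\Lambda+\frac1\Lambda\frac{x^\alpha}{y}$: the sign of $w=y_c-\psi$ near $0$ (correctly extracted from \eqref{eq-yc-asympt}, or just from $y_0<y_c$ since $(1+\alpha)/2>\alpha$), then at a first zero $x^\ast$ the cancellation $y_c'(x^\ast)=0$ against $\psi'(x^\ast)<0$ gives $w'(x^\ast)>0$, contradicting $w'(x^\ast)\le 0$. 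All steps check out ($y_c<0$ for $x>0$ so the slope equation is legitimate, $y_c$ is analytic on $(0,\infty)$, and the infimum argument is sound). The two arguments rest on the same geometric fact — the field is horizontal on the barrier while the barrier has nonzero slope — but yours trades the Poincar\'e--Bendixson/trapping-triangle package for an elementary ODE comparison, at the price of needing the asymptotics of $y_c$ at the origin as an input; the paper's version needs only the transversality computation and the already-established triangle lemma.

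One side remark of yours is mistaken: you say the trapping-triangle machinery ``does not apply verbatim here, precisely because $X_c$ is only tangent to $l$ rather than strictly transverse in the downward direction.'' In fact $X_c(x,\psi(x))=(\psi(x),0)$ is horizontal while the tangent to $l$ is proportional to $(1,\psi'(x))$ with $\psi'(x)\neq 0$, so the two are linearly independent: $X_c$ \emph{is} transverse to $l$ and (since it points leftward and $l$ is decreasing) crosses it in the downward direction. This is exactly the ``mismatch'' you yourself exploit at the contact point, and it is why the paper's trapping-triangle construction goes through without modification. The misconception does not affect the validity of your own argument.
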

\begin{proof}
Along the curve  $\{y=-\frac{1}{c}x^{\alpha}\}$, the vector field $X_c$ is equal to $y\frac{\partial}{\partial x}$: it is horizontal and directed toward the left for all $x>0.$  As the tangent vector field to the curve  $\{y=-\frac{1}{c}x^{\alpha}\}$
has a non-zero vertical component equal to $-\alpha x^{\alpha-1}$ for all $x>0$, it follows that $X_c$ is transverse to the curve $\{y=-\frac{1}{c}x^{\alpha}\}$ and directed downward all along it.  Then, for any $v_0>0$,  we can construct  a trapping triangle $\mathcal{T}_{v_0}$, by using the curves  $L^d=\{y=-\frac{1}{c}x^{\alpha}\}$ and  the $Ox$-axis as $L^u$.  By Lemma \ref{lem-general-trap-triangle}, it follows that the stable trajectory is trapped inside $\mathcal{T}_{v_0}$ for any $v_0>0.$ This concludes the proof.
\end{proof}

\section{Proof of Theorem \ref{theorem_scalar}}\label{proof_thm_scalar}
We are now in the position of proving Theorem \ref{theorem_scalar}. Let $0<\alpha<1$, $\Lambda>0$ be fixed, $0<v_0<1$ according to the physical framework.

\subsection{Existence of $c(v_0)$}\label{initial-th_scalar}
Recall that the initial condition \eqref{IC} reads
\begin{equation}\label{IC_bis}
x(0)=v_0, \quad y(0) =\displaystyle -\frac{c}{\Lambda}(1-v_{0}).
\end{equation}
With the notation of Section \ref{sect-topological-approach}, fulfilling the initial condition \eqref{IC_bis} is equivalent to the existence of a $c=c(v_0)$ such that
\begin{equation}\label{IC_ter}
y_{c(v_0)}(v_0)=-\frac{c(v_0)}{\Lambda}(1-v_0).
\end{equation}
The difficulty seems to be that $c(v_0)$
appears in both sides of the equation \eqref{IC_ter}, i.e., in the expression of the stable manifold and in the initial condition itself. However, taking into account the above properties of $y_c(x)$, this is not really an issue, as we will see below.
\begin{lemma}\label{lem-init-cond}
For any $v_0$, $0<v_0<1,$ there exists a unique value $c(v_0)$ such that
$$y_{c(v_0)}(v_0)=-\frac{c(v_0)}{\Lambda}(1-v_0).$$
Moreover, the functions $v_0\mapsto c(v_0)$ and $v_0 \mapsto y_{c(v_0)}(x)$ are continuous on $(0,1)$.
\end{lemma}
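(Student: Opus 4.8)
The plan is to analyze the single scalar equation
\begin{equation*}
y_{c}(v_0)=-\frac{c}{\Lambda}(1-v_0)
\end{equation*}
as an equation in the unknown $c$, with $v_0\in(0,1)$ fixed, by studying the auxiliary function
\begin{equation*}
G(c):=y_{c}(v_0)+\frac{c}{\Lambda}(1-v_0),\qquad c\geqslant 0.
\end{equation*}
I want to show $G$ has a unique zero on $(0,+\infty)$. First I would record the endpoint behaviour: at $c=0$ we have $y_0(v_0)<0$ (since $y_0(x)<0$ for $x>0$) while $\frac{c}{\Lambda}(1-v_0)=0$, so $G(0)=y_0(v_0)<0$. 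Hence $G$ starts strictly negative. Continuity of $c\mapsto G(c)$ is immediate from Lemma \ref{lem-yc-continuous} (or the stronger joint-continuity Lemma \ref{lem-global-yc}) together with the elementary continuity of the affine term.

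Next I would establish that $G$ is \emph{strictly increasing} in $c$. The term $\frac{c}{\Lambda}(1-v_0)$ is strictly increasing because $1-v_0>0$. The term $c\mapsto y_c(v_0)$ is increasing by Lemma \ref{lem-yc-increasing}. The sum of an increasing function and a strictly increasing function is strictly increasing, so $G$ is strictly monotone; this gives uniqueness of the zero as soon as existence is known.

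For existence it remains to show $G(c)>0$ for $c$ large enough. Here I would use the upper bound from Lemma \ref{eq-lc-trapping}, namely $y_c(v_0)>-\big(\tfrac{2}{(1+\alpha)\Lambda}\big)^{1/2}v_0^{(1+\alpha)/2}$ for all $c>0$ (the left inequality in \eqref{eq-lc-trapping}), which bounds $y_c(v_0)$ \emph{below} by a constant independent of $c$. Therefore
\begin{equation*}
G(c)>-\Big(\frac{2}{(1+\alpha)\Lambda}\Big)^{1/2}v_0^{(1+\alpha)/2}+\frac{c}{\Lambda}(1-v_0)\xrightarrow[c\to+\infty]{}+\infty,
\end{equation*}
so $G(c)>0$ for $c$ sufficiently large. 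By the intermediate value theorem $G$ has a zero, and by strict monotonicity it is unique; call it $c(v_0)$.

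Finally, for the continuity of $v_0\mapsto c(v_0)$ and $v_0\mapsto y_{c(v_0)}(x)$: I would argue that $c(v_0)$ stays in a fixed compact interval for $v_0$ in any compact $I\subset(0,1)$ — the a priori bounds $c_-(v_0)<c(v_0)<c_+(v_0)$ from \eqref{estimate_c}, or directly the estimate just used, give $c(v_0)\leqslant \Lambda\big(\tfrac{2}{(1+\alpha)\Lambda}\big)^{1/2}v_0^{(1+\alpha)/2}/(1-v_0)$, locally bounded away from $0$ and $\infty$. Then, if $v_0^{(n)}\to v_0$, any subsequential limit $c^*$ of $c(v_0^{(n)})$ satisfies $y_{c^*}(v_0)+\frac{c^*}{\Lambda}(1-v_0)=0$ by the joint continuity of $(x,c)\mapsto y_c(x)$ (Lemma \ref{lem-global-yc}); by uniqueness $c^*=c(v_0)$, so the whole sequence converges and $v_0\mapsto c(v_0)$ is continuous. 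Continuity of $v_0\mapsto y_{c(v_0)}(x)$ then follows by composing with the joint-continuous map $(x,c)\mapsto y_c(x)$. The only mildly delicate point — the ``main obstacle'' — is the apparent circularity of \eqref{IC_ter} noted in the text, but it dissolves once one treats the equation as $G(c)=0$ and exploits that both the stable-manifold term and the affine term move monotonically in $c$ in the \emph{same} direction.
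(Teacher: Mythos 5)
Your proposal is correct and follows essentially the same route as the paper: your $G$ is exactly the paper's auxiliary function $\psi(v_0,c)=y_c(v_0)+\frac{c}{\Lambda}(1-v_0)$, and existence/uniqueness is obtained in both cases by the Intermediate Value Theorem applied to this strictly increasing map, which is negative at $c=0$ and becomes positive once $\frac{c}{\Lambda}(1-v_0)$ exceeds $|y_0(v_0)|$ (using the uniform lower bound $y_c(v_0)>y_0(v_0)$). The only cosmetic difference is in the continuity step, where you use a priori compactness of $\{c(v_0)\}$ plus a subsequential-limit-and-uniqueness argument instead of the paper's direct $\varepsilon$--$\delta$ localization via the sign of $\psi$ at $c(v_0^0)\pm\varepsilon/2$; both rest on the same ingredients (joint continuity of $(x,c)\mapsto y_c(x)$ from Lemma \ref{lem-global-yc} and uniqueness of the root), so this is not a substantive divergence.
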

\begin{proof}
For simplicity, we denote $y(c)=y_c(v_0)$ and $\bar y(c)= -\frac{c}{\Lambda}(1-v_0)$ for a fixed $v_0$.
 At $c=0$ it holds  $y(0)=y_0(v_0)=-\Big(\mfrac{2}{(1+\alpha)\Lambda}\Big)^{1/2}v_0^\frac{1+\alpha}{2}<\bar y(0)=0$. When $c$ increases, $y(c)$ increases and $\bar y(c)$ decreases. When $\bar y(c)$ reaches the value $y_0(v_0)$,  $y(c)$ is larger than $y_0(v_0)$.

Consider the function
 \begin{equation*}
 \psi(v_0,c)=y_c(v_0)+\frac{c}{\Lambda}(1-v_0).
 \end{equation*}
To prove the existence and uniqueness of $c(v_0),$ we just need to apply the Intermediate Value Theorem  to the increasing map $c \mapsto\psi(v_0,c)$, which rises from a negative value for $c=0$ to a positive one when $c$ is large enough.

 We also use  the function $\psi$ to prove the continuity of the map $v_0\mapsto c(v_0).$ To this end,
 we take any $v_0^0\in (0,1)$
 and any $\varepsilon>0$ such that $[c(v_0^0)-\frac{\varepsilon}{2},c(v_0^0)+\frac{\varepsilon}{2}]\subset \R^+$. We have that
 \begin{equation*}
 \psi\Big(v_0^0,c(v_0^0)-\frac{\varepsilon}{2}\Big)<\psi\Big(v_0^0,c(v_0^0)\Big)=0<
 \psi\Big(v_0^0,c(v_0^0)+\frac{\varepsilon}{2}\Big).
 \end{equation*}
 As the map $v_0\mapsto \psi(v_0,c)$ is continuous by Lemma \ref{lem-global-yc}, a $\delta(\varepsilon)>0$ exists, such that, if $|v_0-v_0^0| <\delta(\varepsilon)$, we have that $\psi\Big(v_0,c(v_0^0)-\frac{\varepsilon}{2}\Big)<0$ and $\psi\Big(v_0,c(v_0^0)-\frac{\varepsilon}{2}\Big)>0.$ Then, as a consequence of the  Intermediate Value Theorem, the unique solution $c(v_0)$ of the equation $\psi(v_0,c)=0$ is contained in the open interval  $(c(v_0^0)-\frac{\varepsilon}{2},c(v_0^0)+\frac{\varepsilon}{2}).$ As $\varepsilon$ can be taken arbitrarily small, this completes the proof of the continuity of $c(v_0)$ at the point $v_0^0.$
 The continuity of the map $v_0\mapsto y_{c(v_0)}(v_0)$ follows from the continuity of the map $c(v_0)$ and  the continuity of the map  $(c,v_0)\mapsto y_c(v_0)$ proved in Lemma \ref{lem-global-yc}.
 \end{proof}
\subsection{Estimate of $c(v_0)$}\label{estimate_c(v_0)}
To estimate $c(v_0),$ it is interesting to introduce
 the control line $l_c$ defined by \eqref{cl} which we have already used in the previous subsection. We now consider the motions of the three points $y(c)=y_c(v_0)$, $\tilde y(c)= \tilde y_c(v_0)$ and $\bar y(c)$. We have that
 $y(c)<\tilde y(c)$. Recall that $\bar y(c)$ is decreasing and  $y(c)$, $\tilde y(c)$ are increasing, with $y(c)<\tilde y(c)$ for $c>0$ .

 At $c=0$ we have the inequality $y(0)=\tilde y(0)< \bar y(0)=0$.  Subsequently, $\bar y(c)$ crosses $\tilde y(c)$ at a value $c=c_-(v_0)$ at which it holds $y(c_-(v_0))< \tilde y(c_-(v_0))=\bar y(c_-(v_0))$. When $c$ eventually reaches the value $c_+$ where $\bar y(c_+)=y_0(v_0),$ we have the inequality $\bar y(c_+)<y(c_+)<\tilde y(c_+)$.
 This yields the following bounds for $c(v_0)$:
 \begin{equation}\label{bounds}
 c_-(v_0)<c(v_0)<c_+(v_0),
 \end{equation}
 where $c_+=c_+(v_0)$ is given by
  \begin{equation}
 -\frac {c_+}{\Lambda} (1-v_0)=-\Big(\frac{2}{(1+\alpha)\Lambda}\Big)^{1/2}v_0^\frac{1+\alpha}{2},
  \end{equation}
 and $c_-=c_-(v_0)$ by

  \begin{equation}
 -\frac {c_-}{\Lambda} (1-v_0)=-\Big(\frac{2}{(1+\alpha)\Lambda}\Big)^{1/2}v_0^\frac{1+\alpha}{2}+\frac{c_-}{\Lambda}v_0.
  \end{equation}
 We  find that $c_+(v_0)=\Big(\frac{2\Lambda}{1+\alpha}\Big)^{1/2}\frac{v_0^\frac{1+\alpha}{2}}{1-v_0}$ and  $c_-(v_0)=\Big(\frac{2\Lambda}{1+\alpha}\Big)^{1/2}v_0^\frac{1+\alpha}{2}$. These values coincide with the a priori estimates (\ref{upper-c2}) and  (\ref{lower-c2}) obtained by a different method in Subsection \ref{formal_estimate}.

Finally, we observe that Lemma \ref{from_below} provides us with the following new estimate:
\begin{equation}\label{new_estimate}
 \displaystyle \frac{c}{\Lambda}(v_0-1)>-\frac{1}{c}v_0^{\alpha} \quad\; \textrm{or equivalently} \quad\; c(v_0)< \Big(\frac{\Lambda}{1-v_0}\Big)^{1/2} v_0^{\frac{\alpha}{2}}.
 \end{equation}
 It is readily seen that
 $\Big(\mfrac{\Lambda}{1-v_0}\Big)^{1/2} v_0^{\frac{\alpha}{2}}>c_-(v_0)=\Big(\mfrac{2\Lambda}{1+\alpha}\Big)^{1/2}v_0^\frac{1+\alpha}{2}$,
 because it holds
 $\displaystyle v_0(1-v_0)<(1+\alpha)/2$ as $0<v_0<1$ and $0<\alpha<1$.
 Therefore, combining \eqref{new_estimate} with the bounds \eqref{bounds} reveals that
 \begin{equation}
c_-(v_0)<c(v_0)<\min \left(c_+(v_0),\Big(\frac{\Lambda}{1-v_0}\Big)^{1/2} v_0^{\frac{\alpha}{2}} \right).
 \end{equation}

 \subsection{Estimations for the time $R(v_0)$} \label{estimations_time}
Taking into account the existence and uniqueness of the stable manifold $L_c := \{y=y_c\}$ obtained in Subsection \ref{subsect-c-dependence} (see Lemma \ref{lem-global-yc}), it remains to prove that the settling time $R(v_0)$, along the stable manifold, to proceed from the initial condition $(v_0, y_{c(v_0)}(v_0))$ to the origin is finite.

We first consider the settling time $T(x,c)$ to go from $(x,y_c(x))$ to the origin, along the stable manifold $L_c$ of $X_c.$
 \begin{lemma}\label{lem-time-T}
 The settling time  $T(x,c)$  is finite for any $(x,c)\in \R^2_+=  [0,+\infty)\times [0,+\infty)$. Moreover, $T(x,c)$ is continuous on $\R^2_+$ and such that $T(0,c)= 0$. In the interval
  $0<x<x_1(c)=\Big(\mfrac{2\Lambda}{(1+\alpha)c^2}\Big)^\frac{1}{1-\alpha}$, we have the following bounds for $T(x,c)~$

  \begin{equation}\label{eq-T-framing}
 \frac{(2(1+\alpha)\Lambda)^{1/2}}{1-\alpha}x^\frac{1-\alpha}{2}<T(x,c)< \frac{(2(1+\alpha)\Lambda)^{1/2}}{1-\alpha}x^\frac{1-\alpha}{2}\Big(1-c\Big(\frac{1+\alpha}{2\Lambda}\Big)^{1/2}x^\frac{1-\alpha}{2}\Big)^{-1},
 \end{equation}
 which gives the following asymptotic expansion at $(0,c)$:
 \begin{equation}\label{eq-T-asympt}
 T(x,c)= \frac{(2(1+\alpha)\Lambda)^{1/2}}{1-\alpha}x^\frac{1-\alpha}{2}+O(cx^{1-\alpha}),
 \end{equation}
 where the remainder is uniform in $c$ when $c$ is restricted to a compact subset of $[0,+\infty)$.
 \end{lemma}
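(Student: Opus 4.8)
The plan is to realize the settling time as a quadrature along the stable manifold and then push the two-sided control of $y_c$ furnished by \eqref{eq-lc-trapping} through the integral. On $L_c=\{y=y_c(x)\}$ the flow obeys $x'=y_c(x)$, and since $y_c(\xi)<0$ for $\xi>0$, the abscissa $x$ decreases monotonically to $0$; separating variables from the starting value $x$ down to $0$ gives
\begin{equation*}
T(x,c)=\int_0^x \frac{d\xi}{-y_c(\xi)}.
\end{equation*}
Writing $K=\big(\tfrac{2}{(1+\alpha)\Lambda}\big)^{1/2}$, inequality \eqref{eq-lc-trapping} is exactly
\begin{equation*}
K\xi^{\frac{1+\alpha}{2}}-\frac{c}{\Lambda}\xi<-y_c(\xi)<K\xi^{\frac{1+\alpha}{2}},
\end{equation*}
the left member being positive on $(0,x_1(c))$. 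Finiteness of $T$ I would read off the lower bound for $-y_c$: near $\xi=0$ the integrand is dominated by $\big(K\xi^{(1+\alpha)/2}-\tfrac{c}{\Lambda}\xi\big)^{-1}$, which behaves like $K^{-1}\xi^{-(1+\alpha)/2}$ and is integrable precisely because $\alpha<1$; away from $0$, $-y_c$ is bounded below by a positive constant, so no other singularity occurs. For $x\geqslant x_1(c)$ I would split $T(x,c)=T(x_0,c)+\int_{x_0}^x(-y_c)^{-1}\,d\xi$ at a fixed $x_0\in(0,x_1(c))$, both terms being finite.

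For the framing \eqref{eq-T-framing} I would integrate the two bounds on $-y_c$. The upper bound $-y_c<K\xi^{(1+\alpha)/2}$ yields at once
\begin{equation*}
T(x,c)>\frac1K\int_0^x \xi^{-\frac{1+\alpha}{2}}\,d\xi=\frac{2}{K(1-\alpha)}x^{\frac{1-\alpha}{2}}=\frac{(2(1+\alpha)\Lambda)^{1/2}}{1-\alpha}x^{\frac{1-\alpha}{2}},
\end{equation*}
using $2/K=(2(1+\alpha)\Lambda)^{1/2}$; this is the lower bound. For the upper bound I would write the lower estimate on $-y_c$ as $K\xi^{(1+\alpha)/2}\big(1-\beta\xi^{(1-\alpha)/2}\big)$ with $\beta=\tfrac{c}{\Lambda K}=c\big(\tfrac{1+\alpha}{2\Lambda}\big)^{1/2}$, and exploit that the surplus factor $\big(1-\beta\xi^{(1-\alpha)/2}\big)^{-1}$ is increasing on $[0,x]\subset[0,x_1(c))$, so it may be pulled out at its maximal value attained at $\xi=x$:
\begin{equation*}
T(x,c)<\frac{1}{1-\beta x^{\frac{1-\alpha}{2}}}\cdot\frac1K\int_0^x\xi^{-\frac{1+\alpha}{2}}\,d\xi=\frac{(2(1+\alpha)\Lambda)^{1/2}}{1-\alpha}x^{\frac{1-\alpha}{2}}\Big(1-c\Big(\tfrac{1+\alpha}{2\Lambda}\Big)^{1/2}x^{\frac{1-\alpha}{2}}\Big)^{-1}.
\end{equation*}
This is precisely \eqref{eq-T-framing}; the asymptotic \eqref{eq-T-asympt} follows by expanding $(1-\beta x^{(1-\alpha)/2})^{-1}=1+O(\beta x^{(1-\alpha)/2})$, so that the gap between the two bounds is $O\big(\beta x^{(1-\alpha)/2}\cdot x^{(1-\alpha)/2}\big)=O(cx^{1-\alpha})$, uniformly for $c$ in a compact set; letting $x\to0$ in either bound gives $T(0,c)=0$.

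The step I expect to be the main obstacle is the joint continuity of $T$ on $\R^2_+$, since it requires passing a limit under an integral with a singularity at $\xi=0$. The plan is to split $T(x,c)=\int_0^\delta(-y_c)^{-1}\,d\xi+\int_\delta^x(-y_c)^{-1}\,d\xi$. By Lemma \ref{lem-global-yc} the map $(\xi,c)\mapsto y_c(\xi)$ is jointly continuous and $-y_c$ is bounded below by a positive constant on any set $[\delta,M]\times[0,C]$, so the second integral is continuous in $(x,c)$. For the tail I would use the lower bound on $-y_c$: for $(x,c)$ in a fixed compact set $\beta$ stays bounded, hence $\delta$ can be chosen so small that $1-\beta\xi^{(1-\alpha)/2}\geqslant\tfrac12$ on $(0,\delta]$, and then the integrand is dominated by the fixed integrable function $2K^{-1}\xi^{-(1+\alpha)/2}$. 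Thus the tail is uniformly small as $\delta\to0$, exhibiting $T$ as a locally uniform limit of continuous functions; joint continuity, and in particular continuity at the points $(0,c)$ where $T=0$, then follows, completing the plan.
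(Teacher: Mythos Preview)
Your proof is correct and follows essentially the same approach as the paper: the same quadrature $T(x,c)=\int_0^x|y_c(\xi)|^{-1}\,d\xi$, the same two-sided control from \eqref{eq-lc-trapping}, the same factoring $-y_c(\xi)\geqslant K\xi^{(1+\alpha)/2}\big(1-\beta\,x^{(1-\alpha)/2}\big)$ for $\xi\leqslant x$ to get the upper bound, and the same expansion for \eqref{eq-T-asympt}. Your argument for joint continuity via a $\delta$-split and a uniform integrable majorant is more explicit than the paper's (which simply appeals to the locally uniform equivalence $|y_c(\xi)|\sim K\xi^{(1+\alpha)/2}$), but the underlying idea is identical.
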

 \begin{proof}
 The settling time $T(x,c)$ is obtained by integrating the equation $dt=\mfrac{dx}{y_c(x)}$ from $x$ to $0$. Changing the sense of integration, we arrive that $T(x,c)=\int_0^x\mfrac{ds}{|y_c(s)|}.$ This integral is improper at $x=0$, but as by (\ref{eq-yc-asympt}) the function $|y_c(x)|$ is equivalent to $\Big(\mfrac{2}{(1+\alpha)\Lambda}\Big)^{1/2}x^\frac{1+\alpha}{2},$ with $\mfrac{1+\alpha}{2}<1,$ the integral converges and is continuous in $x$. To have continuity in $(x,c)$, we require the above equivalence to be uniform in $c$ when $c$ belongs to a compact subset of $[0,+\infty)$.

 We have that
  \begin{equation}\label{eq-framing}
 \Big(\frac{2}{(1+\alpha)\Lambda}\Big)^{1/2}s^\frac{1+\alpha}{2}
 -\frac{c}{\Lambda}s<|y_c(s)|<
 \Big(\frac{2}{(1+\alpha)\Lambda}\Big)^{1/2}s^\frac{1+\alpha}{2}.
  \end{equation}
Moreover, if  $s\leqslant x<x_1(c)=\Big(\mfrac{2\Lambda}{(1+\alpha)c^2}\Big)^\frac{1}{1-\alpha},$ we have:
$$ \Big(\frac{2}{(1+\alpha)\Lambda}\Big)^{1/2}s^\frac{1+\alpha}{2}
 -\frac{c}{\Lambda}s>\Big(\frac{2}{(1+\alpha)\Lambda}\Big)^{1/2}s^\frac{1+\alpha}{2}\Big(1-c\Big(\frac{1+\alpha}{2\Lambda}\Big)^{1/2}\Big)x^\frac{1-\alpha}{2}\Big).
 $$
 As $\int_0^x s^{-\frac{1+\alpha}{2}}ds
 =\mfrac{2}{1-\alpha}x^\frac{1-\alpha}{2},$ the last inequality and (\ref{eq-framing}) implies the estimates (\ref{eq-T-framing}).
The asymptotic expansion follows directly from (\ref{eq-T-framing}).
\end{proof}

We now consider  the time $R(v_0)=T(v_0,c(v_0))$ corresponding to the initial conditions \eqref{IC_bis}. A direct consequence of Lemma \ref{lem-time-T} and the continuity of the mapping $v_0\mapsto c(v_0)$  is the following:
\begin{lemma}\label{lem-time-R}
The time  $R(v_0)$ to proceed from $\Big(v_0,-\mfrac{c}{\Lambda}(1-v_{0})\Big)$ to the origin,  along the stable manifold  of $X_{c(v_0)},$ is finite for any $v_0\in [0,1).$  Moreover, $R(v_0)$ is continuous on $[0,1)$  and $R(0)= 0.$
\end{lemma}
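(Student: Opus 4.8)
The plan is to obtain Lemma \ref{lem-time-R} directly from Lemma \ref{lem-time-T}, together with the properties of $c(v_0)$ recorded in Lemma \ref{lem-init-cond} and in Subsection \ref{estimate_c(v_0)}. By the very definition of $R$ one has $R(v_0)=T(v_0,c(v_0))$, so everything reduces to controlling the composition $v_0\mapsto(v_0,c(v_0))\mapsto T(v_0,c(v_0))$, and in particular its behaviour at the endpoint $v_0=0$.

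First I would recall from Lemma \ref{lem-time-T} that $T$ is finite and continuous on the whole quadrant $\R^2_+=[0,+\infty)\times[0,+\infty)$, with $T(0,c)=0$ for every $c\geqslant 0$. For a fixed $v_0\in(0,1)$, the a priori bounds $0<c_-(v_0)<c(v_0)<c_+(v_0)$ of Subsection \ref{estimate_c(v_0)} show that $c(v_0)$ is finite, so $R(v_0)=T(v_0,c(v_0))$ is finite; and by Lemma \ref{lem-init-cond} the map $v_0\mapsto c(v_0)$ is continuous on $(0,1)$, whence $v_0\mapsto(v_0,c(v_0))$ is continuous from $(0,1)$ into $\R^2_+$ and, composing with $T$, $R$ is continuous on $(0,1)$.

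It then remains to handle the endpoint $v_0=0$. Since $c_-(v_0)=\big(\tfrac{2\Lambda}{1+\alpha}\big)^{1/2}v_0^{(1+\alpha)/2}$ and $c_+(v_0)=\big(\tfrac{2\Lambda}{1+\alpha}\big)^{1/2}v_0^{(1+\alpha)/2}(1-v_0)^{-1}$ both tend to $0$ as $v_0\to 0^+$, the squeeze $c_-(v_0)<c(v_0)<c_+(v_0)$ forces $c(v_0)\to 0$; hence $c$ extends continuously to $[0,1)$ with $c(0)=0$, and $v_0\mapsto(v_0,c(v_0))$ is continuous on all of $[0,1)$. Composing once more with the continuous function $T$ gives that $R(v_0)=T(v_0,c(v_0))$ is finite and continuous on $[0,1)$, and $R(0)=T(0,c(0))=T(0,0)=0$ by the normalization $T(0,c)=0$.

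I do not expect any real obstacle in this argument: all the analytic effort has already been invested in Lemma \ref{lem-time-T} (finiteness of the improper integral $T(x,c)=\int_0^x ds/|y_c(s)|$ and its joint continuity). The only point deserving an explicit line is the continuous extension of $c$ at $v_0=0$, which is immediate from the explicit expressions of $c_\pm$ given above.
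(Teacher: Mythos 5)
Your argument is correct and follows essentially the same route as the paper: the authors likewise write $R(v_0)=T(v_0,c(v_0))$, use the bound $c(v_0)<c_+(v_0)=\big(\tfrac{2\Lambda}{1+\alpha}\big)^{1/2}v_0^{(1+\alpha)/2}(1-v_0)^{-1}$ to get $c(v_0)=O\big(v_0^{(1+\alpha)/2}\big)$ and hence the continuous extension $c(0)=0$, and then invoke the joint continuity of $T$ with $T(0,c)=0$ from Lemma \ref{lem-time-T}. Your explicit mention of the composition with the continuous map $v_0\mapsto(v_0,c(v_0))$ is just a slightly more detailed rendering of the same proof.
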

\begin{proof} As $c(v_0)<c_+(v_0)=\Big(\frac{2\Lambda}{1+\alpha}\Big)^{1/2}\frac{v_0^\frac{1+\alpha}{2}}{1-v_0},$ we have that $c(v_0)=O\Big(v_0^\frac{1+\alpha}{2}\Big).$ Then, $c(v_0)$ is continuous on $[0,1)$ with $c(0)=0.$  It follows that $R(v_0)$ is continuous on $[0,1)$ with the value $R(0)=0.$
\end{proof}
As the curve $L_c$ is above the curve $L_0,$ the inequality $ \frac{(2(1+\alpha)\Lambda)^{1/2}}{1-\alpha}x^\frac{1-\alpha}{2}<T(x,c)$ in (\ref{eq-T-framing}) is in fact true for all $x>0$. Then we have the following lower bound for $R(v_0):$
\begin{equation}\label{eq-lower-bound}
R(v_0)>\frac{(2(1+\alpha)\Lambda)^{1/2}}{1-\alpha}v_0^\frac{1-\alpha}{2}
\end{equation}
for $0<v_0<1$ and $0\leqslant \alpha<1$.

To obtain an upper bound for $R(v_0)$ which would be valid for any $0<v_0<1,$ it would be better to replace the curve $l_{c(v_0)}$ by one which remains below $\{y=0\}$.
The simplest solution is a curve $d_k:=\{ y=\bar y_k=-k x^\frac{1+\alpha}{2}\}$, with $k=k(v_0)$ chosen so that the vector field $X_{c(v_0)}$ is upward transverse to $d_k$, at least for $x\in (0,v_0].$ A first condition on $k$ is clearly $d_k$ above $L_0$, i.e.
\begin{equation}\label{eq-cond1-k}
k^2< \frac{2}{(1+\alpha)\Lambda}.
\end{equation}
To express the transversality condition, we write the vector field  $X_c$ along $d_k~:$

$$X(x)=(-kx^\frac{1+\alpha}{2}, -k\frac{c}{\Lambda}x^\frac{1+\alpha}{2}+\frac{1}{\Lambda}x^\alpha),$$
and the orthogonal vector field to $d_k$ in the upward direction, given by~:
$$N(x)=(\frac{1+\alpha}{2}kx^\frac{\alpha-1}{2},1).$$
Then,  we compute their scalar product~:
\begin{equation*}
<X(x),N(x)>=\Big(\frac{1}{\Lambda}-\frac{1+\alpha}{2}k^2\Big)x^\alpha-k\frac{c}{\Lambda}x^\frac{1+\alpha}{2}.
\end{equation*}
Upon factoring, it comes:
\begin{equation*}
<X(x),N(x)>=\Big(\frac{1}{\Lambda}-\frac{1+\alpha}{2}k^2\Big)x^\alpha\Big[1-k\frac{c}{\Lambda}\Big(\frac{1}{\Lambda}-\frac{1+\alpha}{2}k^2\Big)^{-1}x^\frac{1-\alpha}{2}\Big].
\end{equation*}
The condition (\ref{eq-cond1-k}) implies that the factor $\Big(\frac{1}{\Lambda}-\frac{1+\alpha}{2}k^2\Big)$ is positive. Then, for the scalar product to be positive for $c=c(v_0)<c_+(v_0)$ and $x\leqslant v_0$,
it is sufficient that the following condition be fulfilled:
\begin{equation*}
k\frac{c_+(v_0)}{\Lambda}\Big(\frac{1}{\Lambda}-\frac{1+\alpha}{2}k^2\Big)^{-1}v_0^\frac{1-\alpha}{2}<1.
\end{equation*}
Taking into account the expression of $c_+(v_0)$ this condition reads as follows:
\begin{equation}\label{eq-cond2_k}
k\Big(\frac{2\Lambda}{1+\alpha}\Big)^{1/2}<\frac{1-v_0}{v_0}\Big(1-\frac{(1+\alpha)\Lambda}{2}k^2\Big).
\end{equation}
From now on, we also assume  that
\begin{equation}\label{eq-cond2-k}
k\leqslant \Big(\frac{1}{(1+\alpha)\Lambda}\Big)^{1/2},
\end{equation}
which implies the condition (\ref{eq-cond1-k}).
As $\frac{1}{v_0}>1,$ to have (\ref{eq-cond2_k}), assuming that condition (\ref{eq-cond2-k}) is fulfilled,  it suffices to have the following condition:
\begin{equation}\label{eq-cond3-k}
k\leqslant \frac{1}{2}\Big(\frac{1+\alpha}{2\Lambda}\Big)^{1/2}(1-v_0).
\end{equation}
As
\begin{equation}\label{eq-A}
A(\alpha)=\frac{2^{3/2}}{(1+\alpha)^{1/2}}> (1+\alpha)^{1/2},
\end{equation}
it is easy to see that conditions (\ref{eq-cond2-k}) and (\ref{eq-cond3-k}) are verified if
\begin{equation}\label{eq-value-k}
 k=\frac{1-v_0}{A(\alpha)\Lambda^{1/2}}.
\end{equation}
For this value of $k,$ the vector field $X_{c(v_0)}$ is transverse in the upward direction along the curve $d_k$ for all $x\in (0,v_0]$. Accordingly, we construct  a trapping triangle, which implies that, for all $x\in (0,v_0]$, it holds $y_{c(v_0)}< \bar y_k(x)=-k x^\frac{1+\alpha}{2}$.  This  implies that $R(v_0)$ has an upper bound equal  to $\frac{1}{k} \frac{1}{1-\alpha}v_0^\frac{1-\alpha}{2}$ for all $v_0\in [0,1)$.  Thanks to (\ref{eq-lower-bound}) and (\ref{eq-value-k}), we have the following bounds:
\begin{lemma}\label{lem-bounds-v0}
 For $0<v_0<1$ and $0\leqslant \alpha <1$, we have that
\begin{equation}\label{eq-bounds-v0}
\frac{(2(1+\alpha)\Lambda)^{1/2}}{1-\alpha}v_0^\frac{1-\alpha}{2}<R(v_0)< \frac{2\Lambda^{1/2}A(\alpha)}{1-\alpha}\frac{v_0^\frac{1-\alpha}{2}}{1-v_0},
\end{equation}
where $A(\alpha)$ is defined by (\ref{eq-A}).
\end{lemma}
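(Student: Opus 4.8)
The plan is to prove the two inequalities in \eqref{eq-bounds-v0} separately, each by the trapping-triangle machinery of Section~\ref{sect-topological-approach} applied to the vector field $X_{c(v_0)}$, using the a priori bound $c(v_0)<c_+(v_0)$ from Subsection~\ref{estimate_c(v_0)}. Recall throughout that $R(v_0)=T(v_0,c(v_0))$ and $T(v_0,c(v_0))=\int_0^{v_0}\frac{ds}{|y_{c(v_0)}(s)|}$.

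\emph{Lower bound.} I would first observe that the left inequality in \eqref{eq-T-framing} is in fact valid for \emph{every} $x>0$, not only for $x<x_1(c)$. Indeed, by Lemma~\ref{lem-global-yc} the stable manifold lies strictly above $L_0$, so $|y_c(s)|<\big(\tfrac{2}{(1+\alpha)\Lambda}\big)^{1/2}s^{\frac{1+\alpha}{2}}$ for all $s>0$; inserting this into $T(x,c)=\int_0^x \frac{ds}{|y_c(s)|}$ and using $\int_0^x s^{-\frac{1+\alpha}{2}}\,ds=\frac{2}{1-\alpha}x^{\frac{1-\alpha}{2}}$ gives $T(x,c)>\frac{(2(1+\alpha)\Lambda)^{1/2}}{1-\alpha}x^{\frac{1-\alpha}{2}}$. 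Specializing to $x=v_0$, $c=c(v_0)$ yields the left inequality of \eqref{eq-bounds-v0} (this is exactly \eqref{eq-lower-bound}), and it remains valid for $\alpha=0$.

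\emph{Upper bound.} The idea is to trap $L_{c(v_0)}$ from below by $L_0$ and from above by the explicit power curve $d_k:=\{y=-kx^{\frac{1+\alpha}{2}}\}$, which — unlike $l_{c(v_0)}$ — stays strictly inside the open quadrant for all $x>0$. The scalar-product computation carried out above shows that, under \eqref{eq-cond1-k}, $X_c$ is transverse to $d_k$ in the upward direction at $x$ precisely when $1-k\frac{c}{\Lambda}\big(\frac{1}{\Lambda}-\frac{1+\alpha}{2}k^2\big)^{-1}x^{\frac{1-\alpha}{2}}>0$; using $c(v_0)<c_+(v_0)$ and $x\leqslant v_0$, this reduces to \eqref{eq-cond2_k}, which in turn follows from \eqref{eq-cond3-k} once \eqref{eq-cond2-k} holds (because $1/v_0>1$). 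One then checks, with $A(\alpha)>(1+\alpha)^{1/2}$ from \eqref{eq-A}, that the explicit value $k=k(v_0)=\frac{1-v_0}{A(\alpha)\Lambda^{1/2}}$ of \eqref{eq-value-k} satisfies \eqref{eq-cond2-k} and \eqref{eq-cond3-k}, hence \eqref{eq-cond1-k} too, so $X_{c(v_0)}$ is upward-transverse along $d_k$ for all $x\in(0,v_0]$ and $d_k$ lies above $L_0$ there. Now apply Definition~\ref{def-trap-triangle} and Lemma~\ref{lem-general-trap-triangle} to the trapping triangle with $L^u=d_k$, $L^d=L_0$ and vertical side $\{x=v_0\}$: by uniqueness of the stable manifold (Lemma~\ref{lem-unicity}), $y_{c(v_0)}$ stays inside it, i.e. $y_0(x)<y_{c(v_0)}(x)<-kx^{\frac{1+\alpha}{2}}<0$ for $x\in(0,v_0]$. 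Hence $|y_{c(v_0)}(x)|>kx^{\frac{1+\alpha}{2}}$ on $(0,v_0]$, and integrating gives $R(v_0)=\int_0^{v_0}\frac{ds}{|y_{c(v_0)}(s)|}<\frac1k\int_0^{v_0}s^{-\frac{1+\alpha}{2}}\,ds=\frac{2}{k(1-\alpha)}v_0^{\frac{1-\alpha}{2}}$; substituting $1/k=A(\alpha)\Lambda^{1/2}/(1-v_0)$ produces the right inequality of \eqref{eq-bounds-v0}.

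\emph{Main obstacle.} The only genuinely delicate step is securing upward transversality of $X_{c(v_0)}$ along the \emph{entire} segment $x\in(0,v_0]$ with a single constant $k$; this is where the a priori estimate $c(v_0)<c_+(v_0)$ and the somewhat fiddly chain of sufficient conditions \eqref{eq-cond1-k}, \eqref{eq-cond2-k}, \eqref{eq-cond3-k} are used, and where one must verify that the explicit choice \eqref{eq-value-k} meets all of them simultaneously. Everything else is a routine trapping-triangle argument combined with an elementary convergent improper integral at $x=0$, whose finiteness is already guaranteed by Lemma~\ref{lem-time-T}.
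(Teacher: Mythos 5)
Your proposal is correct and follows essentially the same route as the paper: the lower bound by integrating $dt=ds/|y_{c}(s)|$ against the bound $|y_{c}(s)|<|y_0(s)|$ coming from $L_c$ lying above $L_0$, and the upper bound by verifying upward transversality of $X_{c(v_0)}$ along the power curve $d_k$ with the explicit choice \eqref{eq-value-k} via the chain \eqref{eq-cond1-k}--\eqref{eq-cond3-k} and the a priori estimate $c(v_0)<c_+(v_0)$, then trapping the stable manifold below $d_k$ and integrating. (You even correct a harmless typo in the paper's text, which drops the factor $2$ from $\int_0^{v_0}s^{-\frac{1+\alpha}{2}}ds$ before restoring it in the final statement.)
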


\subsection{End of the proof of Theorem \ref{theorem_scalar} and Corollary \ref{corollary_thm_scalar}}
To complete the proof of Theorem \ref{theorem_scalar}, the two following points remain to be established. We return to the original notation of the free boundary problem \eqref {scalar-v}, see \eqref{notation_syst_dyn}.
\begin{lemma}\label{holder}
For $0<\alpha<1$, it holds $v\in C^{\infty}([0,R))\cap C^{2+[\beta],\beta-[\beta]}([0,R])$, $\beta=\mfrac{2\alpha}{1-\alpha}$.
\end{lemma}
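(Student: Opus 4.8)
The plan is to establish analyticity of $v$ on $[0,R)$ by a bootstrap, and then to analyze precisely the behaviour of $v$ near the free boundary $\xi=R$, where the regularity is governed by the exponent $\mu:=\frac{2}{1-\alpha}=2+\beta$. Since $v(0)=v_0>0$ and $v(\xi)>0$ on $[0,R)$ by Theorem~\ref{theorem_scalar}, the right-hand side of $v''=\frac1\Lambda(cv'+v^\alpha)$ is an analytic function of $(v,v')$ along the solution; the analytic Cauchy--Lipschitz theorem then gives $v\in C^\omega([0,R))\subset C^\infty([0,R))$. Only the behaviour at $\xi=R$ remains to be studied.

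Near $\xi=R$, put $t=R-\xi\in(0,\delta]$ and $w(t)=v(R-t)$; by \eqref{scalar-v} together with the signs $v>0$, $v'<0$, $v''>0$ on $[0,R)$ from Theorem~\ref{theorem_scalar} one has $w(0)=w'(0)=0$, $w>0$, $w'>0$ on $(0,\delta]$, and $\Lambda w''+cw'=w^\alpha$. Note the scaling identities $\alpha\mu=\mu-2$ and $\frac{1+\alpha}{2}\mu=\mu-1$. Integrating the equation twice with the vanishing initial data yields $w(t)=\frac1\Lambda\int_0^t e^{-c\sigma/\Lambda}\big(\int_0^\sigma e^{c\tau/\Lambda}w(\tau)^\alpha\,d\tau\big)\,d\sigma$, and combining this with the asymptotic expansion \eqref{eq-yc-asympt} of the stable manifold (recall $v'=y_c(v)$, hence $w'=-y_c(w)$) gives, as $t\to0^+$,
\begin{equation*}
w(t)=C_0\,t^{\mu}\bigl(1+o(1)\bigr),\qquad w'(t)=\frac{C_0^{\alpha}}{\Lambda(\mu-1)}\,t^{\mu-1}\bigl(1+o(1)\bigr),\qquad C_0:=\bigl(\Lambda\mu(\mu-1)\bigr)^{-1/(1-\alpha)}>0 .
\end{equation*}

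Now write $w(t)=t^{\mu}u(t)$. Using $\alpha\mu=\mu-2$, the equation becomes the regular--singular equation $\Lambda t^2u''+(2\Lambda\mu+ct)t\,u'+(\Lambda\mu(\mu-1)+c\mu t)u=u^{\alpha}$, and, with $p:=tu'$, the first-order singular system $t\frac{d}{dt}(u,p)=G(t,u,p)$ with $G$ \emph{analytic} near $(0,C_0,0)$ --- the key point being that, once the factor $t^{\mu}$ is removed, the argument of $x\mapsto x^{\alpha}$ stays near $C_0>0$, so the non-Lipschitz term has become smooth. By the previous step $u(t)\to C_0$ and $p(t)=tu'(t)\to0$, so the trajectory converges to the equilibrium $G(0,C_0,0)=0$, whose linearization $D_{(u,p)}G(0,C_0,0)$ has characteristic polynomial $\lambda^2+(2\mu-1)\lambda+2(\mu-1)=(\lambda+1)(\lambda+2(\mu-1))$; its eigenvalues $-1$ and $-2(\mu-1)$ are negative (as $\mu>2$), hence not positive integers. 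By the Briot--Bouquet theorem there is a unique analytic solution of the system through $(C_0,0)$, and since both eigenvalues are negative a Gronwall-type estimate shows it is the only solution converging to $(C_0,0)$; therefore $u$ is analytic near $t=0$ with $u(0)=C_0\neq0$, i.e.\ $v(\xi)=(R-\xi)^{\mu}u(R-\xi)$ near $\xi=R$ with $u$ analytic and $u(R)\neq0$. Finally set $n:=2+[\beta]$ and $\gamma:=\beta-[\beta]\in[0,1)$, so that $\mu-n=\gamma$; Leibniz' formula gives, for $0\le k\le n$, $\frac{d^k}{dt^k}(t^{\mu}u)=\sum_{j=0}^{k}\binom{k}{j}\big(\prod_{i=0}^{j-1}(\mu-i)\big)t^{\mu-j}u^{(k-j)}(t)$, and every exponent satisfies $\mu-j\ge\mu-k\ge\gamma$. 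For $k\le n-1$ each term is $O(t^{\gamma+1})$, hence $C^1$, so $v^{(k)}$ extends continuously to $[0,R]$ with $v^{(k)}(R)=0$; for $k=n$ the only possibly non-$C^1$ term is $\big(\prod_{i=0}^{n-1}(\mu-i)\big)t^{\gamma}u(t)\in C^{0,\gamma}$, so $v^{(n)}$ extends continuously to $[0,R]$ and is $\gamma$-Hölder near $\xi=R$; combined with the analyticity on $[0,R)$ this yields $v\in C^\infty([0,R))\cap C^{2+[\beta],\,\beta-[\beta]}([0,R])$ (and the exponent is sharp, because of the $t^{\gamma}u(t)$ term).

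The delicate point is the smooth factorization $w=t^{\mu}u(t)$: this is precisely where the non-Lipschitz nonlinearity $v^{\alpha}$ could obstruct regularity. The resolution is that after pulling out $t^{\mu}$ the map $x\mapsto x^{\alpha}$ is evaluated near $x=C_0>0$ and hence is analytic, so the matter reduces to the classical theory of singular points of ODEs (Briot--Bouquet) with a non-resonant --- in fact purely negative --- spectrum; the remaining work is the bookkeeping of the asymptotics and the Leibniz expansion above.
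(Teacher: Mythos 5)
Your proof is correct, and it takes a genuinely stronger route than the paper's. The paper's own argument is two lines long: $v$ is smooth wherever it is positive, and the stable-manifold asymptotics \eqref{eq-yc-asympt} give $v'\sim -\left(\tfrac{2}{(1+\alpha)\Lambda}\right)^{1/2}v^{\frac{1+\alpha}{2}}$ near $\xi=R$, whence ``an elementary integration'' yields $v(\xi)\sim C_0\,(R-\xi)^{2/(1-\alpha)}$ and the Hölder class is read off from the exponent $2/(1-\alpha)=2+\beta$. You derive this same first-order asymptotic from the same source, and your constant $C_0=(\Lambda\mu(\mu-1))^{-1/(1-\alpha)}$ is the correct one (the constant displayed in the paper is missing the outer power $2/(1-\alpha)$). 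But the paper's integration only controls $v$ itself, not $v^{(2+[\beta])}$, so strictly speaking it does not justify membership of the top derivative in $C^{0,\beta-[\beta]}$. Your Briot--Bouquet step supplies exactly the missing content: by peeling off $t^{\mu}$ the non-Lipschitz term $x\mapsto x^{\alpha}$ is evaluated near $u=C_0>0$, the equation becomes a regular singular system whose linearization at $(C_0,0)$ has spectrum $\{-1,-2(\mu-1)\}$ (I checked the identities $\alpha\mu=\mu-2$, the equilibrium relation $\Lambda\mu(\mu-1)C_0=C_0^{\alpha}$, and the factorization $(\lambda+1)(\lambda+2(\mu-1))$ of the characteristic polynomial --- all correct), and the resulting factorization $v(\xi)=(R-\xi)^{\mu}u(R-\xi)$ with $u$ analytic and $u(R)=C_0\neq0$ gives the Hölder class of every derivative up to order $2+[\beta]$ and, as a bonus, its sharpness. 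Two small points would merit one more sentence each in a write-up: the terms $t^{\mu-j}u^{(k-j)}$ with $\mu-j\geq\gamma+1$ are $C^1$ because their derivative carries the exponent $\mu-j-1\geq\gamma\geq0$, which is still continuous at $t=0$ (being $O(t^{\gamma+1})$ alone does not give $C^1$); and the identification of your trajectory with the distinguished Briot--Bouquet solution is cleanest in the variable $s=-\log t$, where the equilibrium is repelling with rates $1$ and $2(\mu-1)$ and the explicit $s$-dependence decays like $e^{-s}$, so that at most one solution can converge to $(C_0,0)$. Neither point is a gap.
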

\begin{proof}
It is clear that $v$ is smooth as long as it does not vanish. We need to establish the H\"older regularity of the function $v(\xi)$ near $\xi=R$. It follows from Subsection \ref{asymptotic} that near the origin, it holds
\begin{equation}\label{equiv}
y_c(x)\sim -\Big(\frac{2}{(1+\alpha)\Lambda}\Big)^{1/2}x^\frac{1+\alpha}{2},
\end{equation}
that is, near $\xi=R_-$,
\begin{equation}
v'(\xi) \sim -\Big(\frac{2}{(1+\alpha)\Lambda}\Big)^{1/2}v^\frac{1+\alpha}{2}(\xi).
\end{equation}
An elementary integration yields
\begin{equation}
v(\xi) \sim \frac{1-\alpha}{2}\Big(\frac{2}{(1+\alpha)\Lambda}\Big)^{1/2}\Big(R-\xi\Big)^{\mfrac{2}{1-\alpha}}, \quad {\mfrac{2}{1-\alpha}}=2+\beta.
\end{equation}
\end{proof}

\begin{lemma}\label{convex}
The function $v(\xi)$ is decreasing, convex on the interval $[0,R)$.
\end{lemma}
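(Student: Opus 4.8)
The plan is to read off both statements directly from the identification of $v$ with the trajectory on the stable manifold and from the two location estimates for that manifold established in Section~\ref{technical}. By construction (Subsection~\ref{scheme} together with Lemma~\ref{lem-init-cond}), the solution $v$ of \eqref{scalar-v} is the $x$-component of the trajectory of $X_{c(v_0)}$ running along its stable manifold $L_{c(v_0)}=\{y=y_{c(v_0)}(x)\}$, starting from $(v_0,y_{c(v_0)}(v_0))$ and reaching $O$ at time $R(v_0)$. Equivalently, $v'(\xi)=y_{c(v_0)}(v(\xi))$ for all $\xi\in[0,R)$, while $\xi\mapsto v(\xi)$ runs from $v_0$ down to $0$ with $v(\xi)\in(0,v_0]$ and $v(\xi)>0$ on $[0,R)$.

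For the monotonicity I would simply invoke that $y_c(x)<0$ for every $x>0$ (Lemma~\ref{lem-exist-global-s-m}, recalled in Lemma~\ref{lem-global-yc}): since $v(\xi)>0$ on $[0,R)$ this forces $v'(\xi)=y_{c(v_0)}(v(\xi))<0$ there, so $v$ is strictly decreasing on $[0,R)$, with $v'(R)=0$ from \eqref{scalar-v}.

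For the convexity the essential work has already been carried out in Lemma~\ref{from_below}, which says precisely that the stable manifold lies strictly above the curve $\{y=-\tfrac1c x^\alpha\}$, i.e. $y_{c(v_0)}(x)>-\tfrac{1}{c(v_0)}x^\alpha$ for all $x>0$. Evaluating this at $x=v(\xi)>0$ and multiplying by $c(v_0)>0$ gives $c(v_0)v'(\xi)+v(\xi)^\alpha>0$. Inserting this into the differential equation written as $v''(\xi)=\tfrac1\Lambda\bigl(c(v_0)v'(\xi)+v(\xi)^\alpha\bigr)$ yields $v''(\xi)>0$ for every $\xi\in[0,R)$, which is the claimed convexity; at $\xi=0$ this is also exactly the estimate \eqref{new_estimate}.

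In short, no genuine obstacle remains at this stage: Lemma~\ref{convex} is a direct corollary of Lemma~\ref{from_below} and of the basic negativity $y_c<0$ of the stable manifold. The only mild point worth recording is that all three sign statements ($v>0$, $v'<0$, $v''>0$) hold on the half-open interval $[0,R)$ and degenerate to $0$ as $\xi\to R^-$, consistently with the asymptotics $v'(\xi)\sim -\bigl(\tfrac{2}{(1+\alpha)\Lambda}\bigr)^{1/2}v(\xi)^{(1+\alpha)/2}$ of Lemma~\ref{holder}, since $\alpha<\tfrac{1+\alpha}{2}$ guarantees that $v(\xi)^\alpha$ dominates $|v'(\xi)|$ near $R$, so $c v'+v^\alpha$ indeed stays positive there.
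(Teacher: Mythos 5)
Your proof is correct and rests on the same key ingredient as the paper's, namely Lemma~\ref{from_below}; the paper argues slightly more indirectly (the stable manifold never meets the curve $\{y=-\tfrac1c x^\alpha\}$, so $v''$ never vanishes, and then the sign is fixed by the local convexity near $\xi=R$ coming from Lemma~\ref{holder}), whereas you read the strict one-sided inequality $y_{c}(x)>-\tfrac1c x^\alpha$ directly into the equation $v''=\tfrac1\Lambda(cv'+v^\alpha)$ to get $v''>0$ pointwise. This is a mild streamlining, not a different route, and your treatment of the monotonicity via $y_c(x)<0$ is likewise exactly what the construction provides.
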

\begin{proof}
On one hand, it follows from Lemma \ref{from_below} that the stable manifold $y_c$ never cuts the curve $y= -\mfrac{1}{c}x^{\alpha}$, that is, $y'_c$ does not vanish. In other words, $v(\xi)$ has no inflection points. On the other hand, Lemma \ref{holder} yields that $v(\xi)$ is convex near $\xi=R$, $\xi<R$, hence the convexity of $v(\xi)$ on $(0,R)$.
\end{proof}	

Finally, we prove Corollary \ref{corollary_thm_scalar}. Let $0<a<b<1$, $I=[a,b]$. According to Lemma \ref {lem-init-cond} and Lemma \ref{lem-time-R}, respectively, the mappings $v_0 \mapsto c(v_0)$ and $v_0 \mapsto R(v_0)$ are continuous from $I \rightarrow \R$. Thanks to Lemma \ref{lem-bounds-v0}, we have the uniform estimate:
\begin{equation}\label{uniform_R}
\frac{(2(1+\alpha)\Lambda)^{1/2}}{1-\alpha}a^{\frac{1-\alpha}{2}}<R(v_0)\leqslant R_{max}= \frac{2\Lambda^{1/2}A(\alpha)}{1-\alpha}\frac{b^\frac{1-\alpha}{2}}{1-a}.
\end{equation}
Let $\widetilde{v}(v_0)$ be the extension by $0$ of the function $\xi \mapsto v(v_0;\xi)$ to the interval $[0,R_{max}]$, clearly $\widetilde{v}(v_0) \in C^1([0,R_{max}])$.

\begin{lemma}\label{continuity}
The mapping $v_0 \mapsto \widetilde{v}(v_0)$ is continuous from $I$ to $C^1([0,R_{max}])$.
\end{lemma}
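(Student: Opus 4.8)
The plan is to prove the $C^1$-continuity of $v_0\mapsto\widetilde v(v_0)$ on $I=[a,b]$ by splitting $[0,R_{max}]$ into a \emph{bulk} part $[0,\xi^\ast]$, on which the trajectories of $X_{c(v_0)}$ stay uniformly (in $v_0$) away from the singular line $\{x=0\}$ and where one may invoke the classical continuous dependence of flows on the initial condition and on the parameter, and a \emph{tail} part $[\xi^\ast,R_{max}]$, on which $\widetilde v(v_0;\cdot)$ and $\widetilde v'(v_0;\cdot)$ are uniformly small. I expect the genuine obstacle to be precisely the lack of $C^1$ regularity of $X_c$ along $\{x=0\}$: it forbids applying continuous dependence up to the endpoint $\xi=R(v_0)$, and it is what forces the tail to be controlled separately, using the monotonicity of $v$ together with the uniform framing $y_0(x)<y_c(x)<0$ of Lemma \ref{lem-exist-global-s-m} and the bound $c(v_0)<c_+(b)$ coming from \eqref{bounds}.

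First I would record the reductions. By \eqref{IC_ter} and the construction of Section \ref{sect-topological-approach}, the pair $(v(v_0;\xi),v'(v_0;\xi))$ is the stable-manifold trajectory $\varphi_{c(v_0)}(\xi,m(v_0))$ of $X_{c(v_0)}$ issued from $m(v_0)=\bigl(v_0,-\tfrac{c(v_0)}{\Lambda}(1-v_0)\bigr)$, which reaches $O$ at $\xi=R(v_0)\in(0,R_{max}]$ and is extended by $0$ for $\xi>R(v_0)$; by Theorem \ref{theorem_scalar} it is strictly decreasing on $[0,R(v_0)]$, so $\widetilde v(v_0;\cdot)$ is nonincreasing and nonnegative on $[0,R_{max}]$, and for $\xi<R(v_0)$ one has $\widetilde v'(v_0;\xi)=y_{c(v_0)}\bigl(\widetilde v(v_0;\xi)\bigr)$. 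The maps $v_0\mapsto c(v_0)$ and $v_0\mapsto R(v_0)$ are continuous on $I$ (Lemmata \ref{lem-init-cond} and \ref{lem-time-R}), and $c(v_0)$ stays in a compact subinterval of $(0,+\infty)$. For $\rho\in(0,a/2)$ and each $v_0\in I$, the equation $v(v_0;\xi)=\rho$ has a unique root $\xi_\rho(v_0)\in(0,R(v_0))$.

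Next, the tail estimate. Given $\varepsilon>0$, I would choose $\rho=\rho(\varepsilon)\in(0,a/2)$ so small that $2\rho<\varepsilon$ and $\bigl(\tfrac{2}{(1+\alpha)\Lambda}\bigr)^{1/2}(2\rho)^{(1+\alpha)/2}<\varepsilon/2$. Then, for every $v_0\in I$ and every $\xi\geqslant\xi_{2\rho}(v_0)$ (equivalently, every $\xi$ with $\widetilde v(v_0;\xi)\leqslant 2\rho$), we get $0\leqslant\widetilde v(v_0;\xi)\leqslant 2\rho<\varepsilon$, while $\widetilde v'(v_0;\xi)$ is either $0$ or equals $y_{c(v_0)}\bigl(\widetilde v(v_0;\xi)\bigr)$, so that $|\widetilde v'(v_0;\xi)|\leqslant|y_0(2\rho)|=\bigl(\tfrac{2}{(1+\alpha)\Lambda}\bigr)^{1/2}(2\rho)^{(1+\alpha)/2}<\varepsilon/2$, using $y_0(x)<y_c(x)<0$ and the monotonicity of $x\mapsto|y_0(x)|$. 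The key feature is that these bounds are uniform in $v_0\in I$.

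Finally, the bulk part and the matching. Fix $v_0^\ast\in I$ and set $\xi^\ast:=\xi_\rho(v_0^\ast)\in(0,R(v_0^\ast))\subseteq(0,R_{max}]$. On $[0,\xi^\ast]$ the trajectory through $m(v_0^\ast)$ lies in the compact set $K=\{(x,y):\rho\leqslant x\leqslant b,\ y_0(b)\leqslant y\leqslant 0\}\subset\{x>0\}$, where $X_c$ is analytic in $(x,y)$ and linear in $c$. Since $v_0\mapsto(m(v_0),c(v_0))$ is continuous (Lemma \ref{lem-init-cond}), continuous dependence of the flow on the initial point and on the parameter (Theorem \ref{th-Cauchy}) gives $\delta>0$ such that, for $|v_0-v_0^\ast|<\delta$, the trajectory through $m(v_0)$ remains in a fixed neighborhood of $K$ on $[0,\xi^\ast]$, is $\varepsilon$-close to the reference one in $C^1([0,\xi^\ast],\R^2)$ (hence $v(v_0;\cdot)$ and $v'(v_0;\cdot)$ are uniformly $\varepsilon$-close to $v(v_0^\ast;\cdot)$ and $v'(v_0^\ast;\cdot)$), and satisfies $0<v(v_0;\xi^\ast)<2\rho$ — the first inequality meaning $\xi^\ast<R(v_0)$, so $\widetilde v(v_0;\cdot)=v(v_0;\cdot)$ on $[0,\xi^\ast]$, exactly as for $v_0^\ast$. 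Thus $\widetilde v$ and $\widetilde v'$ are controlled by $\varepsilon$ on $[0,\xi^\ast]$. On $[\xi^\ast,R_{max}]$, monotonicity gives $\widetilde v(v_0;\xi)\leqslant v(v_0;\xi^\ast)<2\rho$ and $\widetilde v(v_0^\ast;\xi)\leqslant\rho<2\rho$, so the tail estimate applies to both, yielding $|\widetilde v(v_0;\xi)-\widetilde v(v_0^\ast;\xi)|<2\varepsilon$ and $|\widetilde v'(v_0;\xi)-\widetilde v'(v_0^\ast;\xi)|<\varepsilon$. Hence $\|\widetilde v(v_0)-\widetilde v(v_0^\ast)\|_{C^1([0,R_{max}])}<2\varepsilon$ whenever $|v_0-v_0^\ast|<\delta$; since $\varepsilon>0$ and $v_0^\ast\in I$ are arbitrary, the map $v_0\mapsto\widetilde v(v_0)$ is continuous. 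The only routine points I would still need to fill in are the well-definedness of $\xi_\rho(v_0)$ in $(0,R(v_0))$ and the elementary bookkeeping of the constants.
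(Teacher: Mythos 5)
Your proof is correct, and it is more quantitative than the one in the paper. The paper argues via joint continuity of $(v_0,\xi)\mapsto\widetilde{v}(v_0;\xi)$ on the compact rectangle $[0,R_{max}]\times I$, splitting it along the curve $\xi=R(v_0)$ into the two closed pieces $T_1=\{\xi\leqslant R(v_0)\}$ and $T_2=\{\xi\geqslant R(v_0)\}$ and invoking the gluing lemma: on $T_2$ the function vanishes, on $T_1$ it is a component of the flow, and the two agree on the separating curve. You instead split the time interval at the level set $\{v=\rho\}$, treat the bulk $[0,\xi^\ast]$ by continuous dependence (Theorem \ref{th-Cauchy}) on a compact set away from the singular line $\{x=0\}$, and control the tail uniformly in $v_0$ via monotonicity and the framing $y_0(x)<y_c(x)<0$. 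Your route buys two things that the paper's proof leaves implicit: first, it explicitly handles the derivative $\widetilde{v}'$, whereas the paper's asserted equivalence between joint continuity of $\widetilde{v}$ and continuity into $C^1([0,R_{max}])$ is really an equivalence with continuity into $C^0$, and the $C^1$ statement requires running the same argument for the $y$-component of the trajectory; second, it supplies the missing uniform estimate needed for continuity of the $T_1$-piece up to the curve $\xi=R(v_0)$, where the trajectory approaches the non-$C^1$ singularity at the origin and Theorem \ref{th-Cauchy} no longer applies directly. The price is a longer, $\varepsilon$--$\delta$ style bookkeeping, but all the steps (well-definedness of $\xi_\rho(v_0)$, the inequality $\lvert y_{c}(x)\rvert\leqslant\lvert y_0(x)\rvert$ with $\lvert y_0\rvert$ increasing, the persistence of $\xi^\ast<R(v_0)$ for nearby $v_0$) are justified by results already established in Sections \ref{sect-topological-approach}--\ref{proof_thm_scalar}, so the argument is complete.
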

\begin{proof} 
The mapping $\tilde v(v_0)$ can be seen as the function of two variables
 $\tilde v(v_0;\xi)$. The mapping  $v_0 \mapsto \widetilde{v}(v_0)$ is continuous from $I \to C^1([0,R_{max}])$ if and only if   $\tilde v(v_0;\xi)$ is continuous on the rectangle $T=[0,R_{max}]\times I$, which  is cut by the curve $l_R:=v_0\rightarrow R(v_0)$ into two closed subsets $T_1=\{v_0\in I, 0\leqslant \xi\leqslant R(v_0)\}$ and $T_2=\{v_0\in I, R(v_0)\leqslant \xi\leqslant R_{max}\}$. The function $\tilde v(v_0;\xi)$ is continuous on $T_2$ where it is identically $0$ and on $T_1$ as it is equal to the $x$-component $x(\xi,(v_0,y_{c(v_0)}(v_0)))$ of the trajectory  of $X_{c(v_0)}$ with initial conditions $(v_0,y_{c(v_0)}(v_0))$. These two definitions have the same value $0$ along $l_R=T_1\cup T_2$. Then, they  define a continuous function on $T$.
\end{proof}


\section{The free boundary problem as an integro-differential system}\label{well-posedness}
 Here, we revisit the one-phase free boundary problem \eqref{free_boundary_problem}-\eqref{free_boundary_conditions} that we rewrite in the form of two coupled subsystems:
\begin{equation}\label{scalar-v-bis}
\begin{cases}
\Lambda v''-cv'=v^{\alpha},\quad 0<\xi<R,\\[1mm]
0<v(0)<1, \, v'(0)= -\displaystyle\frac{c}{\Lambda}(1-v(0)),\\[1mm]
v(R) = v'(R)=0,
\end{cases}
\end{equation}
and
\begin{equation}\label{scalar-u}
\begin{cases}
u''-cu'=-v^{\alpha},\quad 0<\xi<R,\\[1mm]
u(0)=\theta,\, u'(0)=c\theta,\\[1mm]
u(R)=1,\quad u'(R)=0.
\end{cases}
\end{equation}
\begin{remark} A first sight, subsystems \eqref{scalar-v-bis} and \eqref{scalar-u} seem uncoupled because $v$ appears in the right hand side of \eqref{scalar-u}. However, the linkage is through $c$ (see the explicit computation in Subsection \ref{Lambda=0} when $\Lambda=0$).
\end{remark}

\subsection{Equivalence with an integro-differential system}\label{equivalence}
We have the following proposition.
\begin{proposition}\label{integral}
Assume there is a solution $(c,R,u(\xi),v(\xi))$ to system \eqref{scalar-v-bis}-\eqref{scalar-u} such that $c>0$, $0<R<+\infty$, $u$ and $v$ in $C^{\infty}([0,R))\cap C^1([0,R])$. Then,
$(c,R,v(\xi))$ verifies the system consisting of \eqref{scalar-v-bis} and one of the following integral equations:
\begin{align}\label{equiv-1}
\theta+\Lambda v(0)&=1-c(1-\Lambda)\int_0^Re^{-cs}v(s)\, ds,\\
\label{equiv-2}
\theta+v(0)&=1-(1-\Lambda)\int_0^Re^{-cs}v'(s)\, ds.
\end{align}
Furthermore, the three systems \eqref{scalar-v-bis}-\eqref{scalar-u}, \eqref{scalar-v-bis}\&\eqref{equiv-1} and \eqref{scalar-v-bis}\&\eqref{equiv-2} are equivalent.
\end{proposition}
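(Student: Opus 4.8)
The plan is to pin down exactly what the $u$‑subsystem \eqref{scalar-u} adds to the data already encoded in \eqref{scalar-v-bis}: I claim it is a single scalar identity, which will turn out to be \eqref{equiv-1} (equivalently \eqref{equiv-2}). Once $c$, $R$ and $v$ are fixed, the equation $u''-cu'=-v^{\alpha}$ with the two conditions $u(0)=\theta$, $u'(0)=c\theta$ determines $u$ on $[0,R]$ by integration, so the two free boundary conditions $u(R)=1$ and $u'(R)=0$ are honest constraints; the heart of the matter is that, in the presence of \eqref{scalar-v-bis}, they collapse to one.

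First I would record the effect of the $u$‑equation. Multiplying $u''-cu'=-v^{\alpha}$ by $e^{-c\xi}$ and integrating over $[0,R]$ using $u'(0)=c\theta$ gives
\begin{equation*}
u'(R)\,e^{-cR}=c\theta-\int_0^R e^{-cs}v^{\alpha}(s)\,ds ,
\end{equation*}
and integrating once more, interchanging the order of integration in the resulting double integral, gives the closed formula
\begin{equation*}
u(R)=\theta e^{cR}-\frac{e^{cR}}{c}\int_0^R e^{-cs}v^{\alpha}(s)\,ds+\frac1c\int_0^R v^{\alpha}(s)\,ds .
\end{equation*}
Because \eqref{integral_v}, namely $\int_0^R v^{\alpha}=c$, already holds whenever \eqref{scalar-v-bis} does, both conditions $u'(R)=0$ and $u(R)=1$ are equivalent to the single identity $\int_0^R e^{-cs}v^{\alpha}(s)\,ds=c\theta$. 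This is the quantitative form of the equivalence of the two terminal conditions on $u$ noted after \eqref{free_boundary_conditions}, and it shows that all of \eqref{scalar-u} reduces to that one identity.

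Next I would turn this identity into an identity for $v$ alone. Substituting $v^{\alpha}=\Lambda v''-cv'$ from \eqref{scalar-v-bis} and integrating by parts, using $\int_0^R e^{-cs}v''\,ds=-v'(0)+c\int_0^R e^{-cs}v'\,ds$ (the boundary term at $R$ vanishing by $v'(R)=0$) and $-\Lambda v'(0)=c(1-v(0))$, the identity becomes $c(1-v(0))+c(\Lambda-1)\int_0^R e^{-cs}v'(s)\,ds=c\theta$, which after division by $c$ is precisely \eqref{equiv-2}. A further integration by parts, $\int_0^R e^{-cs}v'\,ds=-v(0)+c\int_0^R e^{-cs}v\,ds$ (boundary term at $R$ killed by $v(R)=0$), converts \eqref{equiv-2} into \eqref{equiv-1}; since this step uses only $v(R)=0$, the two integral equations are equivalent as soon as \eqref{scalar-v-bis} holds. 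For the converse I would reverse the whole chain: given $(c,R,v)$ solving \eqref{scalar-v-bis}\&\eqref{equiv-2}, define $u$ on $[0,R]$ as the solution of $u''-cu'=-v^{\alpha}$, $u(0)=\theta$, $u'(0)=c\theta$ — which lies in $C^{\infty}([0,R))\cap C^1([0,R])$ because $v\geqslant 0$ makes $v^{\alpha}$ continuous up to $R$ — and note that \eqref{equiv-2} together with the $v$‑equation restores $\int_0^R e^{-cs}v^{\alpha}=c\theta$, whence $u'(R)=0$ and, through the closed formula above and \eqref{integral_v}, $u(R)=1$. Combining the two directions with the equivalence \eqref{equiv-1}$\Leftrightarrow$\eqref{equiv-2} yields the asserted equivalence of the three systems.

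The calculations are elementary; the only delicate point — the one I would flag — is the choice of integrating factor. The $u$‑equation must be integrated against the weight $e^{-c\xi}$ belonging to its own linear part $\frac{d}{d\xi}-c$, not against the weight $e^{-(c/\Lambda)\xi}$ natural for the operator $\Lambda\frac{d}{d\xi}-c$ in the $v$‑equation, before one substitutes $v^{\alpha}=\Lambda v''-cv'$; it is exactly this mismatch between the two first‑order parts that produces the coefficient $1-\Lambda$ in \eqref{equiv-1}--\eqref{equiv-2}, and it is also why the two terminal conditions on $u$ merge into one only after the integral relation \eqref{integral_v} is invoked.
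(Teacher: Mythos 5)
Your proof is correct, and it takes a genuinely different route from the paper's. The paper introduces the enthalpy variables $w=u+v$ and $\widetilde w=u+\Lambda v$, observes that $w''-cw'=(1-\Lambda)v''$ with data at $\xi=R$, integrates this first-order-reducible equation backwards from $R$, and reads off \eqref{equiv-2} (resp.\ \eqref{equiv-1}) at $\xi=0$; the converse is obtained by reconstructing $w$ from the same formula and setting $u=w-v$. You instead integrate the $u$-equation against its own integrating factor $e^{-c\xi}$ starting from $\xi=0$, obtain closed formulas for $u'(R)$ and $u(R)$ in terms of $\int_0^R e^{-cs}v^{\alpha}\,ds$, and show that \emph{both} terminal conditions on $u$ collapse — via the relation $\int_0^R v^{\alpha}=c$ of \eqref{integral_v} — to the single identity $\int_0^R e^{-cs}v^{\alpha}(s)\,ds=c\theta$, which you then rewrite as \eqref{equiv-2} and \eqref{equiv-1} by substituting $v^{\alpha}=\Lambda v''-cv'$ and integrating by parts. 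I checked the computations (the interchange of integration order, the boundary terms killed by $v(R)=v'(R)=0$ and $-\Lambda v'(0)=c(1-v(0))$, and both directions of the equivalence) and they are all sound. What your approach buys is a quantitative, self-contained proof of the redundancy of $u(R)=1$ and $u'(R)=0$ (which the paper only asserts in passing in Proposition \ref{proposition_simplification}) and a transparent accounting of why the $u$-subsystem contributes exactly one scalar constraint; what the paper's approach buys is the physically meaningful enthalpy variable (cf.\ Remark \ref{physical_remark}) and a slightly shorter computation, since the backward integration from $R$ absorbs the free boundary conditions in one step.
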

\begin{proof}
	(i) Let $(c,R,u(\xi),v(\xi))$ be a solution to \eqref{scalar-v-bis}-\eqref{scalar-u} as above.
	Set $w=u+v$, one infers from system \eqref{scalar-v-bis}-\eqref{scalar-u} that $w$ satisfies
	\begin{equation}\label{scalar-w}
	\begin{cases}
	w''-cw'=(1-\Lambda)v'',\quad 0<\xi<R,\\
	w(R)=1,\, w'(R)=0.
	\end{cases}
	\end{equation}
	For any $\xi\in (0,R)$, we integrate \eqref{scalar-w} from $\xi$ to $R$. Using the free boundary conditions in \eqref{scalar-v-bis} and in \eqref{scalar-w}, we arrive at
	\begin{equation}\label{scalar-w1}
	w'-cw=(1-\Lambda)v'-c, \quad w(R)=1.
	\end{equation}
	It is easy to solve \eqref{scalar-w1} and obtain:
	\begin{equation}\label{soluw}
	w(\xi)=e^{-c(R-\xi)}-\int_{\xi}^Re^{-c(s-\xi)}\big((1-\Lambda)v'(s)-c \big) ds.
	\end{equation}
	Taking $\xi=0$ in \eqref{soluw}, it is not difficult to recover \eqref{equiv-2} using the boundary conditions in \eqref{scalar-v-bis} and \eqref{scalar-u}. Likewise, setting $\widetilde{w}=u+\Lambda v$, one proceeds in the same manner to obtain \eqref{equiv-1} (or equivalently we may integrate by part the integral in \eqref{equiv-2}).
	
	(ii) Assume that $(c,R,v(\xi))$ verifies \eqref{scalar-v-bis}, $v \in C^{\infty}([0,R))\cap C^1([0,R])$, and in addition \eqref{equiv-2} holds. We construct a unique solution to \eqref{scalar-w} as in \eqref{soluw}. Next, setting $u=w-v$, $u$ is in $C^{\infty}([0,R))\cap C^1([0,R])$ and satisfies the equation in \eqref{scalar-u}. From the free boundary conditions in \eqref{scalar-v-bis} and \eqref{scalar-w}, we obtain $u(R)=1,u'(R)=0$. Letting $\xi=0$ in \eqref{soluw}, one has
	\begin{align}\label{w0}
	w(0)=&e^{-cR}-(1-\Lambda)\int_{0}^Re^{-cs}v'(s) \, ds+\int_{0}^R ce^{-cs}\, ds \nonumber\\
	=&1-(1-\Lambda)\int_{0}^Re^{-cs}v'(s) \, ds,
	\end{align}
	which together with \eqref{equiv-2} gives $u(0)=\theta$. Finally, we easily retrieve $u'(0)=c\theta$.
\end{proof}

\begin{remark}\label{physical_remark} $w=u+v$ and $\widetilde w=u +\Lambda v$ are the only linear relations between $u$ and $v$ (see \cite[Remark 8.5]{BSN85}). The physical meaning of $w$ is the normalized enthalpy
(see, e.g., \cite{BL82}). In the equidiffusion case, i.e. $\Lambda=1$, $w \equiv 1$ is a Shvab-Zeldovich variable which eliminates $u=1-v$ (see Subsection \ref{Lambda=1}).
\end{remark}

\subsection{Existence of a solution}\label{existence}

Based on Proposition \ref{integral}, we need to focus only on system \eqref{scalar-v-bis}\&\eqref{equiv-1} or \eqref{scalar-v-bis}\&\eqref{equiv-2} to study system \eqref{scalar-v-bis}-\eqref{scalar-u}. More specifically, we will  prove the existence of a solution via a fixed point method.

Let $a<b$ and $I=[a,b]\subset \mathbb{R}$. We denote by $P$ the projection from $\mathbb{R}$ to $I$ defined by $P(x)= x$ if $x\in \mathrm{int}I$, $P(x)=a$ if $x\leqslant a$ and $P(x)=b$ if $ x\geqslant b$.
Obviously, $P$ is a continuous mapping.

\subsubsection{Case $0<\Lambda<1$}
We first consider the case where $0<\Lambda<1$ and prove by a fixed point argument that there exists a solution to system \eqref{scalar-v-bis}\&\eqref{equiv-2}.

For $0<\theta<1$, $0<\Lambda<1$, we set $I=[1-\theta,1-\Lambda\theta]$. For  $v_0 \in I$, let $(c(v_0),R(v_0), v(v_0;\xi))$ be the unique solution of system \eqref{scalar-v}
given by Theorem \ref{theorem_scalar}.
We define the mapping $\Phi: I \to I$ by
  \begin{equation}\label{definition_Phi}
  \Phi(v_0)=  P\left(1-\theta+(1-\Lambda)\int_0^{R(v_0)} e^{-c(v_0)s}(-v'(v_0;s))\, ds\right),
  \end{equation}
 where $P$ is the projection from $\R$ to $I$ as above.

\begin{theorem}\label{thm_fixpt_Phi}
Let $0<\theta<1$, $0<\Lambda<1$ and $I=[1-\theta,1-\Lambda\theta]$. Then, the mapping $\Phi: I \to I$ defined by \eqref{definition_Phi} has a fixed point $v^*_0$ which solves \eqref{scalar-v-bis}\&\eqref{equiv-2} with $v(0)=v^*_0$.
\end{theorem}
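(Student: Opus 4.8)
The plan is to obtain the fixed point from a soft continuity-plus-Intermediate-Value argument, the genuine analytic content being already packaged in Theorem~\ref{theorem_scalar}, Corollary~\ref{corollary_thm_scalar} and Lemma~\ref{convex}. Introduce the auxiliary map
\[
g(v_0)=1-\theta+(1-\Lambda)\int_0^{R(v_0)}e^{-c(v_0)s}\bigl(-v'(v_0;s)\bigr)\,ds ,\qquad v_0\in I,
\]
so that $\Phi=P\circ g$. The first step is to check that $g$ is continuous on $I$. Since $v'(v_0;R(v_0))=0$, the extension $\widetilde v(v_0)$ of Corollary~\ref{corollary_thm_scalar} satisfies $\widetilde v'(v_0;\cdot)\equiv 0$ on $[R(v_0),R_{max}]$, hence the integral equals $\int_0^{R_{max}}e^{-c(v_0)s}(-\widetilde v'(v_0;s))\,ds$; continuity then follows from the continuity of $v_0\mapsto c(v_0)$, $v_0\mapsto R(v_0)$ and $v_0\mapsto\widetilde v(v_0)\in C^1([0,R_{max}])$ given by Corollary~\ref{corollary_thm_scalar}. (Alternatively, integrating by parts and using $v(R)=0$, one may write $g(v_0)=1-\theta+(1-\Lambda)\bigl(v_0-c(v_0)\int_0^{R(v_0)}e^{-c(v_0)s}v(v_0;s)\,ds\bigr)$, which only involves $v$ itself.) In particular $\Phi:I\to I$ is continuous.

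The second step, which is the crux of the argument, is to show that $g$ points strictly inward at the two endpoints of $I=[1-\theta,\,1-\Lambda\theta]$; this is exactly why this interval is the right choice. At $v_0=a=1-\theta$ it is immediate: by Theorem~\ref{theorem_scalar} one has $v'(a;\cdot)<0$ on $[0,R(a))$, and $1-\Lambda>0$, so the integral is strictly positive and $g(a)>1-\theta=a$. At $v_0=b=1-\Lambda\theta$ the boundary condition in \eqref{scalar-v-bis} yields $v'(b;0)=-\tfrac{c(b)}{\Lambda}(1-b)=-c(b)\theta$, and the convexity of $v(b;\cdot)$ (Lemma~\ref{convex}) makes $v'(b;\cdot)$ non-decreasing, so $-v'(b;s)\le c(b)\theta$ for all $s\in[0,R(b))$. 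Therefore
\[
\int_0^{R(b)}e^{-c(b)s}\bigl(-v'(b;s)\bigr)\,ds\le c(b)\theta\int_0^{R(b)}e^{-c(b)s}\,ds=\theta\bigl(1-e^{-c(b)R(b)}\bigr)<\theta,
\]
whence $g(b)<1-\theta+(1-\Lambda)\theta=1-\Lambda\theta=b$.

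The third and final step is the conclusion. The function $v_0\mapsto g(v_0)-v_0$ is continuous on $[a,b]$, strictly positive at $a$ and strictly negative at $b$, so by the Intermediate Value Theorem it vanishes at some $v_0^*\in(a,b)$. Since $v_0^*\in\mathrm{int}\,I$, the projection is inactive: $\Phi(v_0^*)=P(g(v_0^*))=P(v_0^*)=v_0^*$, so $v_0^*$ is a fixed point of $\Phi$. Rearranging $g(v_0^*)=v_0^*$ gives precisely the integral equation \eqref{equiv-2} for the triple $(c(v_0^*),R(v_0^*),v(v_0^*;\xi))$, so this triple solves \eqref{scalar-v-bis}\&\eqref{equiv-2} with $v(0)=v_0^*$; Proposition~\ref{integral} then reconstructs the accompanying $u$ and produces a full solution of \eqref{scalar-v-bis}-\eqref{scalar-u}.

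The only non-routine ingredient is the upper endpoint estimate $g(b)<b$: it is what prevents the fixed point from being trapped at the boundary of $I$, where the projection $P$ would spoil the integral identity. That estimate rests on the convexity of $v$ from Lemma~\ref{convex} together with the explicit value $v'(0)=-c\theta$ forced by $v_0=b$; the remaining lower endpoint inequality is trivial, and the rest is the standard Schauder/IVT scheme powered by the continuous dependence in Corollary~\ref{corollary_thm_scalar}.
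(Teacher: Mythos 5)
Your proof is correct and follows essentially the same route as the paper: continuity via Corollary \ref{corollary_thm_scalar}, strict positivity of the integral at the lower endpoint, and the convexity bound $-v'(s)\leqslant -v'(0)$ at the upper endpoint are exactly the estimates the paper uses to rule out boundary fixed points. The only difference is cosmetic — you apply the Intermediate Value Theorem directly to $g(v_0)-v_0$, which makes the projection $P$ superfluous, whereas the paper first invokes a fixed point of $P\circ g$ and then excludes the two boundary cases by contradiction.
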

\begin{proof}
It follows from Corollary \ref{corollary_thm_scalar} that the mapping $\Phi: I \to I$ is continuous, hence it has a fixed point $v^*_0$. We denote by $c^*=c(v^*_0)$, $R^*=R(v^*_0)$ and $v^*(\xi)=v(v^*_0,\xi)$ the solution to \eqref{scalar-v} associated with $v^*_0$. We must prove that \eqref{equiv-2} holds, namely
\begin{equation}\label{equiv-2-bis}
v^*_0= 1-\theta+(1-\Lambda)\int_0^{R^*} e^{-c^*s}(-{v^*}'(s))\, ds.
\end{equation}

From the definition of the projection $P$ on $I$, there are 3 cases:
\begin{enumerate}[label=(\arabic*), wide, labelwidth=!, labelindent=0pt]
	\item [\rm (i)] If
$$
1-\theta+(1-\Lambda)\int_0^{R^*} e^{-c^*s}(-{v^*}'(s))\, ds\in (1-\theta,1-\Lambda\theta),
$$
then it holds that $v^*_0\in (1-\theta,1-\Lambda\theta)$ and $P$ is the identity. Therefore, \eqref{equiv-2-bis} is satisfied.

\item [\rm (ii)] If
$$
1-\theta+(1-\Lambda)\int_0^{R^*} e^{-c^*s}(-{v^*}'(s))\, ds\leqslant 1-\theta,
$$
it comes obviously
$$
(1-\Lambda)\int_0^{R^*} e^{-c^*s}(-{v^*}'(s))\, ds\leqslant 0,
$$
which is in contradiction to Theorem \ref{theorem_scalar} which states that ${v^*}'(\xi)<0$, for all $\xi\in [0,R^*)$. Then, case (ii) is ruled out.

\item [\rm (iii)] Finally, let us consider the case where
\begin{equation}\label{rightb}
1-\theta+(1-\Lambda)\int_0^{R^*} e^{-c^*s}(-{v^*}'(s))\, ds\geqslant 1-\Lambda\theta,
\end{equation}
hence $v^*_0=1-\Lambda\theta$.
It follows from Theorem \ref{theorem_scalar} that $v^*$ is decreasing and convex, hence $|{v^*}'(\xi)|\leqslant |{v^*}'(0)|$, $\forall \xi\in[0,R^*)$, therefore:
\begin{equation}\label{rightb2}
\theta\leqslant \int_0^{R^*} e^{-cs}(-{v^*}'(s))\, ds\leqslant \int_0^{R^*} \frac{c^*}{\Lambda}(1-v^*_0)e^{-c^*s}\, ds<\frac{1-v^*_0}{\Lambda}.
\end{equation}
It comes $\Lambda\theta<1-v^*_0$ which is in contradiction to $v^*_0=1-\Lambda\theta$. Then, case (iii) is also ruled out.
\end{enumerate}
\end{proof}

\subsubsection{Case $\Lambda>1$}
Likewise, we prove via a fixed point argument that there exists a solution to system \eqref{scalar-v-bis}\&\eqref{equiv-1} in the case $\Lambda>1$. Here, we take $I=[(1-\theta)/\Lambda,1-\theta/\Lambda]$. This time, we define the mapping $\Psi: I \to I$ by
\begin{equation}\label{definition_Psi}
\Psi(v_0)= P\left(\frac{1}{\Lambda}\left[1-\theta-c(v_0)(1-\Lambda)\int_0^{R(v_0)} e^{-c(v_0)s}v(v_0;s)\, ds\right]\right).
\end{equation}
\begin{theorem}\label{thm_fixpt_Psi}
	Let $0<\theta<1$, $\Lambda>1$ and $I=[(1-\theta)/\Lambda,1-\theta/\Lambda]$. Then, the mapping $\Psi: I \to I$ defined by \eqref{definition_Psi} has a fixed point $v^*_0$ which solves \eqref{scalar-v-bis}\&\eqref{equiv-2} with $v(0)=v^*_0$.
\end{theorem}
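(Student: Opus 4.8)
The plan is to carry over the proof of Theorem~\ref{thm_fixpt_Phi} almost verbatim, replacing the integral equation \eqref{equiv-2} by \eqref{equiv-1} and keeping track of the fact that $1-\Lambda<0$ now.

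First I would record that $I=[(1-\theta)/\Lambda,\,1-\theta/\Lambda]$ is a compact subinterval of $(0,1)$: indeed $\Lambda>1$ and $0<\theta<1$ give $0<(1-\theta)/\Lambda<1-\theta/\Lambda<1$. Hence Theorem~\ref{theorem_scalar} and Corollary~\ref{corollary_thm_scalar} apply for every $v_0\in I$, so the maps $v_0\mapsto c(v_0)$, $v_0\mapsto R(v_0)$ and $v_0\mapsto\widetilde v(v_0)\in C^1([0,R_{max}])$ are continuous. Since $\int_0^{R(v_0)}e^{-c(v_0)s}v(v_0;s)\,ds=\int_0^{R_{max}}e^{-c(v_0)s}\widetilde v(v_0;s)\,ds$ depends continuously on $v_0$ and $P$ is continuous, $\Psi:I\to I$ is continuous. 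As $I$ is a compact interval, the intermediate value theorem (equivalently Brouwer's theorem in dimension one) furnishes a fixed point $v_0^*=\Psi(v_0^*)$; set $c^*=c(v_0^*)$, $R^*=R(v_0^*)$, $v^*=v(v_0^*;\cdot)$ and $J=\int_0^{R^*}e^{-c^*s}v^*(s)\,ds$.

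The heart of the argument is then to show that the projection $P$ is inactive at $v_0^*$, i.e. that $Z:=\frac1\Lambda\bigl[1-\theta-c^*(1-\Lambda)J\bigr]$ lies in the open interval $\bigl((1-\theta)/\Lambda,\,1-\theta/\Lambda\bigr)$; granting this, $\Psi(v_0^*)=Z=v_0^*$ is exactly \eqref{equiv-1} with $v(0)=v_0^*$, and the equivalence of \eqref{scalar-v-bis}\&\eqref{equiv-1} with \eqref{scalar-v-bis}\&\eqref{equiv-2} established in Proposition~\ref{integral} finishes the proof. I would treat the three cases as in Theorem~\ref{thm_fixpt_Phi}. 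If $Z$ is interior, $P$ is the identity and we are done. If $Z\leqslant(1-\theta)/\Lambda$, then (after multiplying by $\Lambda$) $c^*(1-\Lambda)J\geqslant0$; but $c^*>0$, $1-\Lambda<0$ and $v^*(\xi)>0$ on $[0,R^*)$ by Theorem~\ref{theorem_scalar} force $c^*(1-\Lambda)J<0$, a contradiction, so this case cannot occur. If $Z\geqslant1-\theta/\Lambda$, then $v_0^*=1-\theta/\Lambda$ and rearranging gives $c^*(\Lambda-1)J\geqslant\Lambda-1$, i.e. $c^*J\geqslant1$; on the other hand $v^*$ is decreasing with $v^*(0)=v_0^*=1-\theta/\Lambda$, whence
\[
c^*J\leqslant\Bigl(1-\tfrac{\theta}{\Lambda}\Bigr)c^*\!\int_0^{R^*}\!e^{-c^*s}\,ds=\Bigl(1-\tfrac{\theta}{\Lambda}\Bigr)\bigl(1-e^{-c^*R^*}\bigr)<1,
\]
again a contradiction, so this case is ruled out as well.

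I do not expect a genuine obstacle here: the substantive inputs --- continuous dependence of $(c(v_0),R(v_0),\widetilde v(v_0))$ on $v_0$ (Corollary~\ref{corollary_thm_scalar}) and the equivalence of the integro-differential formulations (Proposition~\ref{integral}) --- are already in hand. The only thing that needs care is bookkeeping: the interval $I$ has its endpoints swapped compared with the case $0<\Lambda<1$, and the sign reversal of $1-\Lambda$ is precisely why one must work with \eqref{equiv-1} (an integral of $v$, which is positive) rather than \eqref{equiv-2} (an integral of $v'$, which is negative), so that both endpoint cases still lead to sign contradictions.
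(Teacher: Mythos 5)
Your proposal is correct and follows essentially the same route as the paper: obtain a fixed point of $\Psi$ from the continuity furnished by Corollary \ref{corollary_thm_scalar}, then rule out the two endpoint cases using $v^*>0$ (for the lower endpoint) and the monotonicity bound $c^*\int_0^{R^*}e^{-c^*s}v^*(s)\,ds<(1-\theta/\Lambda)(1-e^{-c^*R^*})<1$ (for the upper endpoint), exactly as in the paper's cases (ii) and (iii). Your explicit appeal to Proposition \ref{integral} to pass from \eqref{equiv-1} to \eqref{equiv-2} is a small but welcome clarification that the paper leaves implicit.
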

\begin{proof}
The proof follows the same pattern as the proof of Theorem \ref{thm_fixpt_Phi}.
As before, we denote by $(c^*,R^*,v^*)$ the solution to \eqref{scalar-v} associated with $v^*_0$. This time, we must prove that \eqref{equiv-1} holds, which is equivalent to
\begin{equation}\label{equiv-1-bis}
v^*_0= \frac{1}{\Lambda}\left[1-\theta-c^*(1-\Lambda)\int_0^{R^*} e^{-c^*s}v^*(s)\, ds\right].
\end{equation}
\begin{enumerate}[label=(\arabic*), wide, labelwidth=!, labelindent=0pt]
	\item [\rm (i)] If
$$
\frac{1}{\Lambda}\left[1-\theta-c^*(1-\Lambda)\int_0^{R^*} e^{-c^*s}v^*(s)\, ds\right]\in (\frac{1-\theta}{\Lambda},1-\frac{\theta}{\Lambda}),
$$
then it holds that $v^*_0\in (\mfrac{1-\theta}{\Lambda},1-\mfrac{\theta}{\Lambda})$ and $P$ is the identity. Therefore, \eqref{equiv-1-bis} is satisfied.

\item [\rm (ii)] If
$$
\frac{1}{\Lambda}\left[1-\theta-c^*(1-\Lambda)\int_0^{R^*} e^{-c^*s}v^*(s)\, ds\right]\leqslant \frac{1-\theta}{\Lambda},
$$
it comes
$$
c^*(\Lambda-1)\int_0^{R^*} e^{-c^*s}v^*(s)\, ds\leqslant 0,
$$
in contradiction to Theorem \ref{theorem_scalar} which states that $v^{*}(\xi)>0$, for all $\xi\in [0,R^*)$. Case (ii) is then ruled out.

\item [\rm (iii)] If
\begin{equation}\label{rightb3}
\frac{1}{\Lambda}\left[1-\theta-c(1-\Lambda)\int_0^{R^*} e^{-c^*s}v^*(s)\, ds\right]\geqslant 1-\frac{\theta}{\Lambda},
\end{equation}
then it holds $v^*_0=1-\frac{\theta}{\Lambda}$.
From \eqref{rightb3}, we infer that
\begin{equation}\label{rightb4}
c\int_0^{R^*} e^{-c^*s}v^*(s)\, ds\geqslant 1.
\end{equation}
On the other hand, noticing from Theorem \ref{theorem_scalar} that ${v^*}'(\xi)<0$, for all $\xi\in [0,R^*)$, it comes
\begin{equation*}
c^*\int_0^{R^*} e^{-c^*s}v^*(s)\, ds< c^*\int_0^{R^*} e^{-c^*s}v^*(0)\, ds=c^*\int_0^{R^*} e^{-c^*s}(1-\frac{\theta}{\Lambda})\, ds=(1-\frac{\theta}{\Lambda})(1-e^{-c^*R^*})<1,
\end{equation*}
in contradiction to \eqref{rightb3}.
\end{enumerate}
\end{proof}

\subsection{Final results}\label{final}
We are now in position to summarize the results of Subsections \ref{equivalence} and \ref{existence} for $0<\Lambda<1$ and $\Lambda>1$ (see Subsection \ref{Lambda=1} for $\Lambda=1$).
\begin{theorem}
Let $0<\alpha<1$, $0<\theta<1$ and $\Lambda>0$ be fixed. We define the interval $I\subset(0,1)$ as:
$I=[1-\theta,1-\Lambda\theta]$ if $0<\Lambda<1$,  $I=[(1-\theta)/\Lambda,1-\theta/\Lambda]$ if $\Lambda>1$. Then, the one-phase free boundary problem \eqref{free_boundary_problem}-\eqref{free_boundary_conditions} has a solution $(c,R,u(\xi),v(\xi))$ such that $c>0$, $0<R<+\infty$, $v(0)\in I$, $u$ and $v$ in $C^{\infty}([0,R))\cap C^{2+[\beta],\beta-[\beta]}([0,R])$, $\beta=2\alpha/(1-\alpha)$.
\end{theorem}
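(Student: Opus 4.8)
The plan is to assemble three results already established in this section and the previous ones: the equivalence of the coupled free boundary problem with an integro-differential system (Proposition \ref{integral}), the scalar existence-and-regularity theorem (Theorem \ref{theorem_scalar}), and the two fixed-point theorems (Theorems \ref{thm_fixpt_Phi} and \ref{thm_fixpt_Psi}). Since the coupled system \eqref{scalar-v-bis}-\eqref{scalar-u} is merely a rewriting of \eqref{free_boundary_problem}-\eqref{free_boundary_conditions}, it suffices to exhibit a solution $(c,R,u,v)$ of \eqref{scalar-v-bis}-\eqref{scalar-u} with $c>0$, $0<R<+\infty$, $v(0)\in I$, and the announced smoothness.

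I would first treat the case $0<\Lambda<1$, with $I=[1-\theta,1-\Lambda\theta]$. Theorem \ref{thm_fixpt_Phi} provides a fixed point $v_0^*\in I$ of the map $\Phi$, and Theorem \ref{theorem_scalar} attaches to it a triple $(c^*,R^*,v^*)=(c(v_0^*),R(v_0^*),v(v_0^*;\cdot))$ that solves \eqref{scalar-v-bis} together with the integral equation \eqref{equiv-2}, with $v^*(0)=v_0^*\in I$, $c^*>0$ and $0<R^*<+\infty$. By Proposition \ref{integral}, from such a solution one recovers $w^*$ via formula \eqref{soluw} and sets $u^*=w^*-v^*$; then $(c^*,R^*,u^*,v^*)$ solves the coupled system \eqref{scalar-v-bis}-\eqref{scalar-u}, hence the one-phase free boundary problem \eqref{free_boundary_problem}-\eqref{free_boundary_conditions}. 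For $\Lambda>1$ the argument is identical with $I=[(1-\theta)/\Lambda,1-\theta/\Lambda]$, using Theorem \ref{thm_fixpt_Psi} and the integral equation \eqref{equiv-1}; since Proposition \ref{integral} states that the three systems \eqref{scalar-v-bis}-\eqref{scalar-u}, \eqref{scalar-v-bis}\&\eqref{equiv-1} and \eqref{scalar-v-bis}\&\eqref{equiv-2} are equivalent, the reconstruction of $u^*$ is unchanged. The borderline value $\Lambda=1$ is postponed to Subsection \ref{Lambda=1}.

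There remains the regularity. For $v^*$ there is nothing to add: Theorem \ref{theorem_scalar}, via Lemma \ref{holder}, already gives $v^*\in C^{\infty}([0,R^*))\cap C^{2+[\beta],\beta-[\beta]}([0,R^*])$ with $\beta=2\alpha/(1-\alpha)$. For $u^*$, on $[0,R^*)$ the function $v^*$ is positive and $C^{\infty}$, so $(v^*)^{\alpha}\in C^{\infty}$ and the linear equation ${u^*}''-c{u^*}'=-(v^*)^{\alpha}$ gives $u^*\in C^{\infty}([0,R^*))$. Near $\xi=R^*$, Lemma \ref{holder} yields $v^*(\xi)\sim c_0(R^*-\xi)^{2/(1-\alpha)}$, hence $(v^*)^{\alpha}(\xi)\sim c_0^{\alpha}(R^*-\xi)^{\beta}$ because $\alpha\cdot\frac{2}{1-\alpha}=\frac{2\alpha}{1-\alpha}=\beta$, so $(v^*)^{\alpha}\in C^{[\beta],\beta-[\beta]}$ in a left neighbourhood of $R^*$. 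Since ${u^*}'(R^*)=0$, integrating ${u^*}''=c{u^*}'-(v^*)^{\alpha}$ from $R^*$ shows ${u^*}'$ is of order $(R^*-\xi)^{1+\beta}$, one order smoother than $(v^*)^{\alpha}$; therefore ${u^*}''$ inherits the H\"older regularity of $(v^*)^{\alpha}$, and $u^*\in C^{2+[\beta],\beta-[\beta]}([0,R^*])$. This establishes all the assertions.

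In fact there is no genuine obstacle left at this stage, the hard analysis being entirely contained in Theorem \ref{theorem_scalar} and in the two fixed-point theorems; the only point deserving care is the sharp H\"older exponent, i.e.\ checking that the power $\beta-[\beta]$ announced for $u$ and $v$ is exactly the one produced, which reduces to the identity $\alpha\cdot\frac{2}{1-\alpha}=\beta$ and to the fact that the lower-order term $c\,{u^*}'$ in ${u^*}''=c\,{u^*}'-(v^*)^{\alpha}$ does not lower the regularity.
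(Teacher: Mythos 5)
Your proof follows essentially the same route as the paper: invoke Theorem \ref{thm_fixpt_Phi} (resp. Theorem \ref{thm_fixpt_Psi}) to obtain a fixed point $v_0^*\in I$, use Proposition \ref{integral} to reconstruct $u^*$ and pass back to \eqref{free_boundary_problem}-\eqref{free_boundary_conditions}, and apply Theorem \ref{theorem_scalar} together with Lemma \ref{holder} for the H\"older regularity at the free boundary. The only difference is that you spell out the transfer of the $C^{2+[\beta],\beta-[\beta]}$ regularity from $v^*$ to $u^*$ (via the behaviour of $(v^*)^{\alpha}\sim(R^*-\xi)^{\beta}$ and the observation that the lower-order term $c\,{u^*}'$ does not degrade it), a step the paper asserts without detail; your argument for it is correct.
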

\begin{proof}
	According to the value of $\Lambda$ against $1$, Theorem \ref{thm_fixpt_Phi} and Theorem \ref{thm_fixpt_Psi} provide the existence of a solution to system \eqref{scalar-v-bis}\&\eqref{equiv-2}, or to system \eqref{scalar-v-bis}\&\eqref{equiv-1}, such that $v(0)\in I$. We proved in Proposition \ref{integral} that these systems are equivalent to the free boundary problem \eqref{free_boundary_problem}-\eqref{free_boundary_conditions}.
	
	Next, we apply Theorem \ref{theorem_scalar} which gives the (optimal) H\"older regularity of $v(\xi)$ at the free boundary $R$. More precisely, this result follows from Lemma \ref{holder} which is related to the asymptotic expansion of the stable manifold near the origin. Thus, the same regularity result holds for $u(\xi)$.
	\end{proof}

Finally, we are able to prove the existence of a solution to the free interface problem \eqref{TW_alpha}-\eqref{interface_conditions} thanks to Proposition \ref{proposition_simplification}. The last part of the proof of Theorem \ref{FBP_alpha} concerning the limit cases where $\alpha\to 1$ and $\alpha\to 0$ is deferred to Subsections \ref{lemma_limit_1} and \ref{lemma_limit_0}.

\section{Limit cases}
\subsection{Equidiffusion}\label{Lambda=1}
In the case $\Lambda=1$ (Lewis number equal to unity), equation \eqref{scalar-w} yields that the normalized enthalpy $w=u+v$ is equal to $1$; hence $u=1-v$. The system for $v$ reads as follows:	
\begin{equation}\label{equidiff}
\begin{cases}
v''-cv'=v^{\alpha},\quad 0<\xi<R,\\
v(0)=1-\theta,\quad
v'(0)=-c\theta,\\
v(R)=v'(R)=0.
\end{cases}
\end{equation}
It follows from Theorem \ref{theorem_scalar}, taking $v_0=1-\theta$, that system \eqref{equidiff} has a unique solution $(c,R,v(\xi))$.

\subsection{Infinite Lewis number}\label{Lambda=0}
Here, we consider the limit case $\Lambda=0$, namely an infinite Lewis number. In this case $v(0)=1$.
Integrating $cv'+v^{\alpha}=0$ yields explicitly $R=c/(1-\alpha)$ and
$$
	v(\xi)=\left(1-\xi\frac{1-\alpha}{c}\right)^{\frac{1}{1-\alpha}}.$$
It comes the formula
	\begin{equation}\label{c_formula}
	\theta =1-c\int_0^{\frac{c}{1-\alpha}}e^{-cs}\left(1-\xi\frac{1-\alpha}{c}\right)^{\frac{1}{1-\alpha}}ds,
	\end{equation}
which coincides with \eqref{equiv-1} when $\Lambda=0$.
	\begin{example}
		(i) In the case $\alpha=0$, the formula \eqref{c_formula} yields (see \eqref{speed_zero})
		\begin{equation}
		\theta=\frac{1-e^{-c^2}}{c^2}.
		\end{equation}
		(ii) In the case  $\alpha=1/2$, the formula becomes
		\begin{equation}
		\theta=\frac{1}{2}c^{-4}e^{-2c^{2}}-c^{-2}+\frac{1}{2}c^{-4}.
		\end{equation}	
	\end{example}

	However, for general $\alpha\in(0,1)$, the integral in \eqref{c_formula} can not be computed explicitly.
	For $c>0$, $0<\alpha<1$ and $0<\theta<1$, let us define:
	\begin{equation}\label{FC}
	f(c,\alpha,\theta):=\theta-1+c\int_0^{\frac{c}{1-\alpha}}e^{-cs}\left(1-s\frac{1-\alpha}{c}\right)^{\frac{1}{1-\alpha}}\, ds.
	\end{equation}
We have
	\begin{lemma} For fixed $0<\alpha<1$ and $0<\theta<1$, the equation $f(c,\alpha,\theta)=0$ has a unique positive root.
	\end{lemma}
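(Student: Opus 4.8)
The plan is to collapse the three occurrences of $c$ in \eqref{FC} into a single variable and then argue by monotonicity. In the integral defining $f$ I would substitute $\tau=\frac{(1-\alpha)s}{c}$; this turns the upper limit $s=\frac{c}{1-\alpha}$ into $\tau=1$ and gives $cs=\mu\tau$ with $\mu:=\frac{c^{2}}{1-\alpha}$, so that
\[
f(c,\alpha,\theta)=\theta-1+g(\mu),\qquad g(\mu):=\mu\int_{0}^{1}e^{-\mu\tau}(1-\tau)^{\frac{1}{1-\alpha}}\,d\tau .
\]
Since $c\mapsto\mu=c^{2}/(1-\alpha)$ is a strictly increasing bijection of $(0,+\infty)$ onto itself, the lemma is equivalent to showing that the equation $g(\mu)=1-\theta$ has exactly one root $\mu\in(0,+\infty)$.

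Next I would put $g$ in a more convenient form. Writing $p:=\frac{1}{1-\alpha}>1$ and integrating by parts (with $\mu e^{-\mu\tau}=-\frac{d}{d\tau}e^{-\mu\tau}$, the boundary term surviving only at $\tau=0$), one gets
\[
g(\mu)=1-p\int_{0}^{1}e^{-\mu\tau}(1-\tau)^{p-1}\,d\tau .
\]
From this expression $g$ is smooth on $(0,+\infty)$; the elementary identity $p\int_{0}^{1}(1-\tau)^{p-1}\,d\tau=1$ gives $g(0^{+})=0$, while the crude bound $0\leqslant\int_{0}^{1}e^{-\mu\tau}(1-\tau)^{p-1}\,d\tau\leqslant\int_{0}^{1}e^{-\mu\tau}\,d\tau=\frac{1-e^{-\mu}}{\mu}$ gives $g(+\infty)=1$. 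Differentiating under the integral sign,
\[
g'(\mu)=p\int_{0}^{1}\tau\,e^{-\mu\tau}(1-\tau)^{p-1}\,d\tau>0 ,
\]
so $g$ is strictly increasing.

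Finally, since $g$ is continuous and strictly increasing on $(0,+\infty)$ with $g(0^{+})=0$, $g(+\infty)=1$, and since $0<\theta<1$ forces $1-\theta\in(0,1)$, the intermediate value theorem together with strict monotonicity produces a unique $\mu^{\ast}\in(0,+\infty)$ with $g(\mu^{\ast})=1-\theta$; the unique positive root of $f(\cdot,\alpha,\theta)$ is then $c^{\ast}=\bigl((1-\alpha)\mu^{\ast}\bigr)^{1/2}$. As a sanity check, for $\alpha=0$ one has $p=1$, $\mu=c^{2}$, $g(\mu)=1-\frac{1-e^{-\mu}}{\mu}$, and $g(\mu)=1-\theta$ is exactly $\theta=\frac{1-e^{-c^{2}}}{c^{2}}$, consistent with \eqref{speed_zero}.

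I do not anticipate a genuine obstacle here: the only points needing (routine) care are the interchange of limit and derivative with the integral, both justified by dominated convergence with the integrable dominating function $(1-\tau)^{p-1}$ on $(0,1)$. The conceptual content is entirely in the substitution, which replaces the awkward $c$-dependence of the integrand and the integration limit by the single monotone parameter $\mu$.
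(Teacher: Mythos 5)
Your proof is correct. It follows the same overall strategy as the paper --- strict monotonicity in the wave speed combined with the endpoint limits $\theta-1<0$ and $\theta>0$ (equivalently $g(0^+)=0$, $g(+\infty)=1$) and the intermediate value theorem --- but the execution is genuinely cleaner. The paper works with $f$ as a function of $c$ directly: it uses one change of variables plus L'Hospital's rule to compute $\lim_{c\to+\infty}f=\theta$, and a second, different change of variables to show $f_c>0$. You instead observe that all the $c$-dependence factors through the single parameter $\mu=c^2/(1-\alpha)$, and the integration by parts $g(\mu)=1-p\int_0^1 e^{-\mu\tau}(1-\tau)^{p-1}\,d\tau$ then makes both endpoint limits elementary (no L'Hospital, just the crude bound $(1-\tau)^{p-1}\leqslant 1$ and dominated convergence) and renders $g'(\mu)=p\int_0^1\tau e^{-\mu\tau}(1-\tau)^{p-1}\,d\tau>0$ immediate. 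What the paper's route buys is that it never leaves the original variable $c$, so the sign of $f_c$ is read off directly; what yours buys is a one-variable reduction in which monotonicity and the limits are visible at a glance, plus the built-in consistency check with \eqref{speed_zero} at $\alpha=0$. The only points requiring care in your version --- differentiation under the integral sign and the $\mu\to\infty$ limit --- are, as you say, justified by the integrable dominating function $(1-\tau)^{p-1}$ on $(0,1)$.
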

\begin{proof}
	(i) We first look for the existence of a root $c>0$.
	It is easy to check that
	\begin{equation}\label{negative}
	\lim_{c\to 0^+}f(c,\alpha,\theta)=\theta-1.
	\end{equation}
	Next, let us prove that
	\begin{equation}\label{positive}
	\lim_{c\to +\infty}f(c,\alpha,\theta)=\theta.
	\end{equation}
	Indeed, with the change of variable $y=1-s(1-\alpha)/c$, \eqref{FC} becomes
	\begin{align}\label{fc1}
	f(c,\alpha,\theta)=\theta-1+\frac{c^2}{1-\alpha}\int_0^1 e^{-\frac{c^2(1-y)}{1-\alpha}}y^{\frac{1}{1-\alpha}}\, dy.
	\end{align}
	Next, let $z=c^2y/(1-\alpha)$. It comes:
	\begin{align*}
	f(c,\alpha,\theta)
	=\theta-1+\left(\frac{1-\alpha}{c^2}\right)^{\frac{1}{1-\alpha}}e^{-\frac{c^2}{1-\alpha}}\int_0^{\frac{c^2}{1-\alpha}} e^z z^{\frac{1}{1-\alpha}}\, dz.
	\end{align*}
	By using L'Hospital's rule, one obtains
	\begin{align*}
	\lim_{c\to+\infty}\left(\frac{1-\alpha}{c^2}\right)^{\frac{1}{1-\alpha}}e^{-\frac{c^2}{1-\alpha}}\int_0^{\frac{c^2}{1-\alpha}} e^z z^{\frac{1}{1-\alpha}}\, dz=1.
	\end{align*}
	Thus, \eqref{positive} holds. \\
	(ii) Next, to prove the uniqueness of a positive root, we compute the sign of the derivative $f_c(c,\alpha,\theta)$. To this end, we make the convenient change of variable $z=c^2(1-y)/(1-\alpha)$ in \eqref{fc1}:
	\begin{align*}
	f(c,\alpha,\theta)
	=\theta-1+\int_0^{\frac{c^2}{1-\alpha}} e^{-z} \left[1-\frac{(1-\alpha)z}{c^2}\right]^{\frac{1}{1-\alpha}}\, dz.
	\end{align*}
	Then,
	\begin{equation}
	f_c(c,\alpha,\theta)
	=\int_0^{\frac{c^2}{1-\alpha}} e^{-z} \left[1-\frac{(1-\alpha)z}{c^2}\right]^{\frac{\alpha}{1-\alpha}}\frac{2z}{c^3}\, dz>0.
	\end{equation}

\end{proof}

\subsection{Limit $\alpha \to 1$}\label{limit_1}
Here, $\Lambda>0$ and $\theta\in(0,1)$ are fixed. With each $\alpha$, $0<\alpha<1$, we associate a solution $(c,R,u(\xi),v(\xi))$ of the free boundary problem \eqref{free_boundary_problem}-\eqref{free_boundary_conditions}. However, we recall that problem \eqref{free_boundary_problem}-\eqref{free_boundary_conditions} is equivalent to the system
\begin{equation}\label{scalar-v-ter}
\begin{cases}
\Lambda v''-cv'=v^{\alpha},\quad \xi>0,\\[1mm]
v(0) \in I,\,v'(0)= -\displaystyle\frac{c}{\Lambda}(1-v(0)),\\[1mm]
v(R) = v'(R)=0,
\end{cases}
\end{equation}
together with
\begin{equation}\label{equiv-1_bis}
\theta+\Lambda v(0)=1-c(1-\Lambda)\int_0^Re^{-cs}v(s)\gap ds,
\end{equation}
 where the interval $I=[a,b]$ is given in Theorem \ref{thm_fixpt_Phi} or \ref{thm_fixpt_Psi}, according to the value of $\Lambda$, and depends only on $\Lambda$ and $\theta$.

\begin{lemma}\label{lemma_limit_1}
	Let $\Lambda>0$ and $0<\theta<1$ be fixed. Let $(\alpha_n)_{n\in\N}$ be an increasing sequence such that $0<\alpha_n<1$, $\alpha_n \to 1$ as $n \to \infty$; the corresponding solution of \eqref{scalar-v-ter}-\eqref{equiv-1_bis} is denoted by $(c_n,R_n,v_n(\xi))$. Then, $R_n \to +\infty$ as $n \to \infty$.
\end{lemma}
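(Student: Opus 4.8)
The plan is to deduce $R_n\to+\infty$ directly from the lower bound for the settling time established in Lemma~\ref{lem-bounds-v0}, namely the left inequality in \eqref{eq-bounds-v0} (equivalently \eqref{eq-lower-bound}), combined with the fact that the initial value $v_n(0)$ stays bounded away from $0$ along the sequence.

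First I would record where $v_n(0)$ lives. For each $n$, the triple $(c_n,R_n,v_n)$ is produced, according to the size of $\Lambda$, by the fixed point of the map $\Phi$ of Theorem~\ref{thm_fixpt_Phi} (case $0<\Lambda<1$), by the fixed point of $\Psi$ of Theorem~\ref{thm_fixpt_Psi} (case $\Lambda>1$), or by the equidiffusion reduction of Subsection~\ref{Lambda=1} (case $\Lambda=1$). In all three situations $v_n(0)$ belongs to a fixed interval $I=[a,b]\subset(0,1)$ that depends only on $\Lambda$ and $\theta$ --- respectively $I=[1-\theta,1-\Lambda\theta]$, $I=\{1-\theta\}$, or $I=[(1-\theta)/\Lambda,1-\theta/\Lambda]$ --- and the essential point is that $a>0$ with $a,b$ independent of $\alpha_n$.

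Next, since $R_n=R\big(v_n(0)\big)$ is the settling time of the scalar problem \eqref{scalar-v} with $\alpha=\alpha_n$ and $v_0=v_n(0)$, the bound \eqref{eq-lower-bound}, which holds for every $v_0\in(0,1)$ and every $0\le\alpha_n<1$, gives
\[
R_n>\frac{(2(1+\alpha_n)\Lambda)^{1/2}}{1-\alpha_n}\,\big(v_n(0)\big)^{\frac{1-\alpha_n}{2}}.
\]
As $\tfrac{1-\alpha_n}{2}>0$, the function $x\mapsto x^{(1-\alpha_n)/2}$ is increasing on $(0,\infty)$, so $\big(v_n(0)\big)^{(1-\alpha_n)/2}\ge a^{(1-\alpha_n)/2}$ and therefore
\[
R_n>\frac{(2(1+\alpha_n)\Lambda)^{1/2}}{1-\alpha_n}\,a^{\frac{1-\alpha_n}{2}}.
\]
Finally I would let $n\to\infty$: since $\alpha_n\to1$ one has $(2(1+\alpha_n)\Lambda)^{1/2}\to 2\Lambda^{1/2}$, and $a^{(1-\alpha_n)/2}\to a^0=1$ because $a>0$, while $\dfrac{1}{1-\alpha_n}\to+\infty$; hence the right-hand side tends to $+\infty$, which forces $R_n\to+\infty$.

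I do not expect a genuine obstacle here: the argument rests entirely on the already-proved lower estimate for $R(v_0)$ and on the uniform inequality $v_n(0)\ge a>0$. The one place a careless attempt could fail is if the admissible interval $I$ for $v_n(0)$ degenerated to $\{0\}$ or drifted with $\alpha_n$ as $\alpha_n\to1$; this is precisely excluded by the explicit formulas for $I$ in Theorems~\ref{thm_fixpt_Phi} and~\ref{thm_fixpt_Psi}. Monotonicity of the sequence $(\alpha_n)$ is not needed for this route.
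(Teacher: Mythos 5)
Your argument is correct, but it is genuinely different from the one in the paper. The paper proves Lemma~\ref{lemma_limit_1} by contradiction and compactness: assuming $R_n\leqslant A$, it bounds $c_n$ above and below, extracts a subsequence converging in $C^1([0,A])$, passes to the limit in the equation, and concludes because the resulting $\alpha=1$ system \eqref{scalar_FBP_infty} admits no solution with finite $R_\infty$ (invoking \cite{BSN85}). You instead exploit the already-proved quantitative lower bound \eqref{eq-lower-bound} (the left inequality of \eqref{eq-bounds-v0} in Lemma~\ref{lem-bounds-v0}), valid for every $v_0\in(0,1)$ and every $\alpha\in[0,1)$, together with the uniform lower bound $v_n(0)\geqslant a>0$ coming from the fixed interval $I$ of Theorems~\ref{thm_fixpt_Phi} and~\ref{thm_fixpt_Psi} (which depends only on $\Lambda$ and $\theta$, not on $\alpha_n$); since $a^{(1-\alpha_n)/2}\to 1$ while $(1-\alpha_n)^{-1}\to+\infty$, the conclusion is immediate. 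Your route is shorter, avoids subsequence extraction entirely (the divergence holds for the full sequence by a direct pointwise bound), does not use monotonicity of $(\alpha_n)$, and even yields a rate, $R_n\gtrsim 2\Lambda^{1/2}/(1-\alpha_n)$ --- in effect it turns the paper's own remark after \eqref{eq-bounds-v0_intro} (``this estimate explodes when $\alpha\to 1$'') into the proof. What the paper's longer argument buys in exchange is structural information: it identifies the limiting first-order system and ties the blow-up of $R_n$ to the non-existence of a finite trailing interface for $\alpha=1$, which is the conceptual point the authors want to make contact with \cite{BSN85}.
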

\begin{proof}
Assume by contradiction that the sequence $(R_n)_{n\in\mathbb{N}}$ is bounded, namely there exists $A>0$ such that $R_n \leqslant A$ for all $n\in\N$. In view of $0\leqslant v_n(\xi)< 1$ for all $0\leqslant \xi \leqslant R_n$ and formula \eqref{integral_v}, it holds
$$
c_n= \int_0^{R_n} v^{\alpha_n}_{n}(\xi) \gap d\xi,
$$
hence $c_n \leqslant R_n \leqslant A$. In addition, according to \eqref{estimate_c},
\begin{equation*}
c^2_n \geqslant \frac{2\Lambda}{1+\alpha_n}v_n^{1+\alpha_n}(0),
\end{equation*}
$v_n(0) \geqslant a=\inf I>0$. Hence, the sequence $(c_n)_{n\in\N}$ is bounded from below by $a\sqrt{\Lambda}$.

By abuse of notation,  we also denote by $v_n(\xi)$ its extension by $0$ to the interval $[0,A]$.
The system for $(c_n, v_n(\xi))$ reads
\begin{eqnarray}\label{scalar_FBP_n}
\left\{
\begin{array}{ll}
&\Lambda  v''_{n}-c_n v'_{n}=v_{n}^{\alpha_{n}} , \quad 0<\xi<A, \\[2mm]
& v_{n}(0) \in I, \quad v'_{n}(0)= -\displaystyle\frac{c_n}{\Lambda}(1-v_n(0)),\\[2mm]
&v_n(A)=v'_{n}(A)=0.
\end{array}
\right.
\end{eqnarray}

 Then, it is easy to see that the sequence $(v_n)_{n\in\N}$ is bounded in the space $H^2([0,A])$. We extract a convergent subsequence  $(c_{n'},v_{n'})_{n'\in\N}$ such that $c_{n'} \to c_{\infty} >0$, $v_{n'} \to v_{\infty}$ in the space $C^1([0,A])$.
We pass to the limit as $n' \to \infty$, $\alpha_{n'} \to 1$: there exists a non-negative function $\chi$ such that $(v_{n'})^{\alpha_{n'}} \rightharpoonup \chi$ weakly in $L^2(0,A)$. At the limit it holds in the distribution sense in the interval $(0,A)$:
\begin{equation*}
\Lambda v''_{\infty}
-c_{\infty} v'_{\infty}=\chi.
\end{equation*}
Moreover, at the limit,
\begin{equation*}
v_{\infty}(0)\in I, \quad v'_{\infty}(0)= -(c_{\infty}/\Lambda)(1-v_{\infty}(0)), \quad v_{\infty}(A)=v'_{\infty}(A)=0.
\end{equation*}
We observe that $v_{\infty}$ belongs to $C^1([0,A])$ and is convex, non-increasing:
there exists $0<R_{\infty} \leqslant A$ such that $v_{\infty}(\xi)>0$ for $0<\xi<R_{\infty}$, $v_{\infty}(\xi)=0$ for $R_{\infty} \leqslant \xi \leqslant A$.
At fixed $\xi \in (0, R_{\infty})$, $(v_{n'}(\xi))^{\alpha_{n'}} \to v_{\infty}(\xi)$ as $n' \to \infty$.
Therefore, it comes $\chi(\xi)=v_{\infty}(\xi)$ whenever $0<\xi<{R}_{\infty}$.
The system eventually reads:
\begin{eqnarray}\label{scalar_FBP_infty}
\left\{
\begin{array}{ll}
&\Lambda  v''_{\infty}
-c_{\infty} v'_{\infty}= v_{\infty}, \quad 0<\xi<R_{\infty},\\[2mm]
&v_{\infty}(0)\in I, \quad v'_{\infty}(0)= -\displaystyle\frac{c_{\infty}}{\Lambda}(1-v_{\infty}(0)),\\[2mm]
&v_{\infty}({R}_{\infty})= v'_{\infty}({R}_{\infty})=0,
\end{array}
\right.
\end{eqnarray}
which has no solution for finite ${R}_{\infty}$ (see also \cite[Lemma 3.6 case (i)]{BSN85}).

As a consequence, the sequence $(R_n)_{n\in\N}$ is unbounded when $\alpha_n \to 1$. We may extract a subsequence $n'' \to \infty$ such that $R_{n''} \to +\infty$. Finally, repeating the process, $(R_n)_{n\in\N}$ goes to $+\infty$.
\end{proof}

\subsection{Limit $\alpha \to 0$}\label{limit_0}
As in the previous subsection, $\Lambda>0$ and $\theta\in(0,1)$ are fixed. With each $\alpha$, $0<\alpha<1$, we associate a solution $(c,R,v(\xi))$ of the problem \eqref{scalar-v-ter}-\eqref{equiv-1_bis}.
\begin{lemma}\label{lemma_limit_0}
	Let $\Lambda>0$ and $0<\theta<1$ be fixed. Let $(\alpha_n)_{n\in\N}$ be a decreasing sequence such that $0<\alpha_n<1$ and $\alpha_n \to 0$ as $n \to \infty$; the corresponding solution of \eqref{scalar-v-ter}-\eqref{equiv-1_bis} is denoted by $(c_n,R_n,v_n(\xi))$. Then, $(c_n,R_n,v_n(\xi))_{n\in\N}$ converges to the unique solution $(c,R,v(\xi))$ of the limit system
\begin{eqnarray}\label{scalar_v_0}
\left\{
\begin{array}{ll}
&\Lambda v''-cv'=1 , \quad 0<\xi<R, \\[2mm]
& v(0) \in I,\, v'(0)= -\displaystyle\frac{c}{\Lambda}(1-v(0)),\\[2mm]
&v(R)=v'(R)=0,
\end{array}
\right.
\end{eqnarray}
together with
\begin{equation}\label{equiv-1_bis_0}
\theta+\Lambda v(0)=1-c(1-\Lambda)\int_0^{R}e^{-cs}v(s)\gap ds,
\end{equation}
where $c=R$ is uniquely given by \eqref{speed_zero}, see Subsection \ref{Lambda=0} (i).	
\end{lemma}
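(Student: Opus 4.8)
The plan is to mimic the compactness argument used for Lemma~\ref{lemma_limit_1}, the two structural differences being the behaviour of the non-Lipschitz term $v^{\alpha}$ as $\alpha\to 0$ and the identification of the limiting free boundary. First I would collect uniform bounds: since $v_n(0)\in I=[a,b]\subset(0,1)$ and $\alpha_n\to 0$, the a priori estimates \eqref{estimate_c} and the bounds of Lemma~\ref{lem-bounds-v0} (for which $A(\alpha_n)\to 2^{3/2}$ and $1-\alpha_n\to 1$) show, after discarding finitely many indices, that $c_n$ lies in a compact subinterval of $(0,+\infty)$ and $0<R_n<R_{max}$ for a fixed finite $R_{max}$. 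From $\Lambda v_n''=c_nv_n'+v_n^{\alpha_n}$, using $0\le v_n<1$ and the monotonicity and convexity of Lemma~\ref{convex} (so that $|v_n'|\le|v_n'(0)|=\tfrac{c_n}{\Lambda}(1-v_n(0))$), the functions $v_n$ extended by $0$ to $[0,R_{max}]$ are bounded in $C^2([0,R_{max}])$; hence, along a subsequence $n'$, $c_{n'}\to c_\infty>0$, $R_{n'}\to\bar R\in(0,+\infty)$ and $v_{n'}\to v_\infty$ in $C^1([0,R_{max}])$, with $v_\infty$ nonnegative, non-increasing, convex and $v_\infty(0)\in I$.

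Next I would pass to the limit. Set $\rho:=\inf\{\xi\ge 0:\ v_\infty(\xi)=0\}$; since $v_{n'}(R_{max})=0$, $\rho\le R_{max}$, and convexity together with $v_\infty\ge 0$ forces $v_\infty'(\rho)=0$, $v_\infty\equiv 0$ on $[\rho,R_{max}]$ and $v_\infty>0$ on $[0,\rho)$. The crucial point, absent when $\alpha\to1$, is that $t^{\alpha_{n'}}\to 1$ for every fixed $t>0$, whence $v_{n'}^{\alpha_{n'}}\to 1$ uniformly on every compact subinterval of $(0,\rho)$, where $v_{n'}$ is bounded below by a positive constant; passing to the limit in the equation on $(0,\rho)$ gives $\Lambda v_\infty''-c_\infty v_\infty'=1$, while the boundary conditions yield $v_\infty'(0)=-\tfrac{c_\infty}{\Lambda}(1-v_\infty(0))$ and $v_\infty(\rho)=v_\infty'(\rho)=0$. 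To identify $\bar R$ with $\rho$, note that $\bar R\ge\rho$ is immediate ($v_{n'}>0$ on $[0,\rho-\varepsilon]$ for $n'$ large), while the reverse follows from the barrier $d_k$ of Subsection~\ref{estimations_time} with $k=k(v_0)=(1-v_0)/(A(\alpha)\Lambda^{1/2})$: integrating $v_{n'}'<-k(v_{n'}(0))\,v_{n'}^{(1+\alpha_{n'})/2}$ gives $v_{n'}(\xi)\ge\big(\tfrac{1-\alpha_{n'}}{2}k(v_{n'}(0))\big)^{2/(1-\alpha_{n'})}(R_{n'}-\xi)^{2/(1-\alpha_{n'})}$ for all $\xi\in[0,R_{n'})$, with constant bounded away from $0$ since $1-v_{n'}(0)\ge 1-b>0$; hence $\bar R>\rho$ would keep $v_{n'}$ bounded below by a positive constant at a point of $(\rho,\bar R)$, contradicting $v_{n'}\to v_\infty=0$ there. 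Finally, the uniform convergence of $v_{n'}$ and $c_{n'}\to c_\infty$, $R_{n'}\to\rho$ allow me to pass to the limit in \eqref{equiv-1_bis} and obtain \eqref{equiv-1_bis_0}; thus $(c_\infty,\rho,v_\infty|_{[0,\rho]})$ solves \eqref{scalar_v_0}--\eqref{equiv-1_bis_0}.

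It remains to see that \eqref{scalar_v_0}--\eqref{equiv-1_bis_0} has a unique solution and to identify it. Integrating the equation of \eqref{scalar_v_0} over $(0,R)$ and using the boundary conditions gives $c=R$ (this is \eqref{integral_v} for $\alpha=0$, i.e.\ $c=\int_0^R1\,d\xi$); the remaining linear problem is then integrated explicitly as in Subsection~\ref{Lambda=0}, and inserting the result into \eqref{equiv-1_bis_0} the $\Lambda$-dependent exponentials cancel, leaving precisely \eqref{speed_zero}, $\theta=(1-e^{-c^2})/c^2$, which has a unique positive root $c$; then $R=c$ and $v$ is explicit. Consequently the limit does not depend on the chosen subsequence, and the standard subsequence argument yields convergence of the full sequence $(c_n,R_n,v_n)$, in $(\R^+)^2\times C^1([0,R_{max}])$ (the functions extended by $0$), to this unique solution, with $c=R$ given by \eqref{speed_zero}.

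The step I expect to be the main obstacle is the interaction between the degeneracy of $v_n^{\alpha_n}$ near the free boundary and the location of the limiting free boundary: away from $R_n$ one has the clean limit $v_n^{\alpha_n}\to 1$, but near $R_n$ the values of $v_n$ collapse to $0$ and $v_n^{\alpha_n}$ need not tend to $1$, so — unlike the case $\alpha\to1$, where the weak limit $\chi$ of $v_n^{\alpha_n}$ merely had to be identified on the interior of the support — here the difficulty reappears as the need to pin down $\bar R=\lim R_n$, which is why the quantitative lower barrier of Subsection~\ref{estimations_time}, with constants uniform in $\alpha_n$, is essential.
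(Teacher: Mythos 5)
Your proposal is correct and follows essentially the same route as the paper's proof: uniform bounds on $c_n$ and $R_n$ from Lemma \ref{lem-bounds-v0} (with constants controlled as $\alpha_n\to 0$), compactness and extraction of a $C^1$-convergent subsequence, identification of the limit of $v_{n'}^{\alpha_{n'}}$ as $1$ on the interior of the support of the limit function, passage to the limit in \eqref{equiv-1_bis} by dominated convergence, and uniqueness of the limit problem (reducing to \eqref{speed_zero}) to upgrade subsequential to full convergence. The one point where you go beyond the paper is the identification $\lim R_{n'}=\rho$: the paper simply asserts that this follows from uniform convergence, whereas the inequality $\limsup R_{n'}\leqslant\rho$ genuinely requires something like your quantitative lower barrier $v_{n'}(\xi)\geqslant\bigl(\tfrac{1-\alpha_{n'}}{2}k\bigr)^{2/(1-\alpha_{n'})}(R_{n'}-\xi)^{2/(1-\alpha_{n'})}$ with $k$ bounded away from $0$ uniformly in $n'$; this is a worthwhile addition rather than a deviation.
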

\begin{proof}
The proof follows most of the lines of the proof of Lemma \ref{limit_1}.
This time, we must show that the sequence  $(R_n)_{n\in\mathbb{N}}$ is bounded.
We take advantage of formula \eqref{eq-bounds-v0} which reads:
\begin{equation}\label{uniform_R}
\frac{(2(1+\alpha_n)\Lambda)^{1/2}}{1-\alpha_n}a^{\frac{1-\alpha_n}{2}}<R_n< \frac{2\Lambda^{1/2}A(\alpha_n)}{1-\alpha_n}\frac{b^\frac{1-\alpha_n}{2}}{1-b}.
\end{equation}
Here, $A(\alpha_n)={2^{3/2}}/{(1+\alpha_n)^{1/2}}<2^{3/2}$. For simplicity we assume $0<\alpha_n \leqslant 1/2$ for all $n\in\N$. It comes the new definition of $A$:
\begin{equation}\label{uniform_R_bis}
{(2a\Lambda)^{1/2}}<R_n< A=2^{7/2}\Lambda^{1/2}\frac{b^{1/4}}{1-a}.
\end{equation}
Hence, the sequence $(c_n)_{n\in\N}$ is also bounded from above and from below, as in Lemma \ref{lemma_limit_1}.

Next, we mimic the proof of Lemma \ref{lemma_limit_1}. By abuse of notation,  we also denote by $v_n(\xi)$ its extension by $0$ to the interval $[0,A]$.
The system for $(c_n, v_n(\xi))$ reads
\begin{eqnarray}\label{scalar_FBP_n}
\left\{
\begin{array}{ll}
&\Lambda  v''_{n}-c_n v'_{n}=v_{n}^{\alpha_{n}} , \quad 0<\xi<A, \\[2mm]
& v_{n}(0) \in I, \quad v'_{n}(0)= -\displaystyle\frac{c_n}{\Lambda}(1-v_n(0)),\\[2mm]
&v_n(A)=v'_{n}(A)=0.
\end{array}
\right.
\end{eqnarray}
The sequence $(v_n)_{n\in\N}$ is bounded in the space $H^2([0,A])$. We extract a convergent subsequence  $(c_{n'},v_{n'})_{n'\in\N}$ such that $c_{n'} \to c >0$, $v_{n'} \to v$ in the space $C^1([0,A])$.
We pass to the limit as $n' \to \infty$, $\alpha_{n'} \to 0$: there exists a non-negative function $\chi$ such that $(v_{n'})^{\alpha_{n'}} \rightharpoonup \chi$ weakly in $L^2(0,A)$. At the limit it holds in the distribution sense in the interval $(0,A)$:
\begin{equation*}
\Lambda v''
-c v'=\chi.
\end{equation*}
As in the proof in Lemma \ref{lemma_limit_1}, there exists $0<R\leqslant A$ such that $v(\xi)>0$ for $0<\xi<R$, $v(\xi)=0$ for $R \leqslant \xi \leqslant A$.
The only difference is the following: at fixed $\xi \in (0,R)$, $(v_{n'}(\xi))^{\alpha_{n'}} \to 1$ as $n' \to \infty$.
Therefore, $\chi(\xi)=1$ whenever $0<\xi<R$.
The system eventually reads:
\begin{eqnarray}\label{scalar_FBP_infty_bis}
\left\{
\begin{array}{ll}
&\Lambda  v''
-c v'= 1, \quad 0<\xi<R,\\[2mm]
&v(0)\in I, \quad v'(0)= -\displaystyle\frac{c}{\Lambda}(1-v(0)),\\[2mm]
&v(R)= v'(R)=0.
\end{array}
\right.
\end{eqnarray}

The next step is to take the limit in the formula
\begin{equation*}\label{equiv-1_bis_n}
\theta+\Lambda v_{n'}(0)=1-c_{n'}(1-\Lambda)\int_0^{A}e^{-c_{n'}s}v_{n'}(s)\gap ds.
\end{equation*}
By Lebesgue's Theorem, it comes:
\begin{equation}\label{equiv-1_bis_infty}
\theta+\Lambda v(0)=1-c(1-\Lambda)\int_0^{R}e^{-cs}v(s)\gap ds.
\end{equation}

It follows from the uniform convergence of $v_{n'}(\xi)$ to $v(\xi)$ on the interval $[0,A]$ that $R_{n'} \to R$. Finally, because the system \eqref{scalar_FBP_infty_bis}-\eqref{equiv-1_bis_infty} has a unique solution, the entire sequence converges to $(c,R,v(\xi))$.

The proof is thus complete.
\end{proof}


\section{Acknowledgment} The authors want to thank Peter Gordon for bringing Chung K. Law's book to our attention and for fruitful discussions.
P.S. and L.Z. would like to thank the National Natural Science Foundation of China (No. 11771336).

\appendix
\section{About the Poincar\'e-Bendixson Theorem}\label{subsect-summary}
There are many textbooks on the qualitative theory of vector fields (see, e.g., Hartman \cite{H}, Lefschetz \cite{L}, Arnold \cite{A}). First versions of the  Poincar\'e-Bendixson Theorem were given in the historical works of Poincar\'e (see \cite{P}) and Bendixson (see \cite{B}). Here, we state a simplified modern version of the Poincar\'e-Bendixson Theorem (see Theorem \ref{P-B-theorem} below). A more complete version may be found in \cite{L}.

To begin with, for the convenience of the reader, we recall some basic definitions.

Let us consider a topological flow $\varphi(t,m)$ on a manifold $M$. For each $m\in M$ the curve $t\mapsto \varphi(t,m)$, parametrized by the time $t$, is the trajectory by $m$ defined on a maximum open interval $(-\tau_-(m),+\tau_+(m))$ neighborhood of $0$ and such that $\varphi(0,m)=m$ (one may have $\tau_+(m)=+\infty$ and/or  $-\tau_-(m)=-\infty)$. The {\it orbit} $\gamma_m$ is the oriented geometrical image of the trajectory by $m$ (the orbit is positively oriented by the time direction but the time parametrization is omitted). We can also consider the positive half-orbit
$\gamma^+_m= \varphi([0,\tau_+(m)),m)$ and the negative half-orbit  $\gamma^-_m= \varphi([0,-\tau_-(m)),m)$.

The following limit sets have been intensively studied and used in the qualitative theory of dynamical systems (see, e.g., \cite{L},\cite{A},\cite{BS},\cite{DF}):
\begin{definition}\label{limit-sets} Let $\gamma=\gamma_m$ be an orbit of a continuous flow $\varphi(t,m)$  on a manifold $M$. The $\omega$-limit set of $\gamma$ is the set
	$\omega(\gamma)$ of points $p\in M$ such that, for $m\in \gamma,$  there exists a sequence of times $(t_i)\rightarrow +\tau_+(m)$ with the property that $\varphi(t_i,m)\rightarrow p,$  for $i\rightarrow +\infty.$ In a similar way, one defines the $\alpha$-limit set $\alpha(\gamma)$ by considering the sequences $(t_i)\rightarrow -\tau_-(m).$ One can also write $\omega(\gamma^+_m)$ and $\alpha(\gamma^-_m)$ for $\omega(\gamma)$ and $\alpha(\gamma)$ respectively. These limit sets does not depend on the choice of a particular point $m\in \gamma$.
\end{definition}

We recall that a vector field $X$ on a $n$-manifold $M$ is defined, on each open chart $\Omega$ with coordinates $x=(x_1,\ldots,x_n)$, as a first order differential operator:
\begin{equation*}
X(x)=A_1(x)\frac{\partial}{\partial x_1}+\cdots +A_n(x)\frac{\partial}{\partial x_n}.
\end{equation*}
The vector field is said of class $C^k$ if the components  $A_i(x)$ are of this class on each chart $\Omega$.  A vector field may depend on a  parameter $\lambda$,  when the components in each chart depend on this parameter. A family of vector fields $X_\lambda$  is of class $C^k$, if the components $A_i(x,\lambda)$ are of this class as functions of $(x,\lambda)$.

We also recall a basic result which is a corollary of the (real) Cauchy Theorem:
\begin{theorem} \label{th-Cauchy}
	Let $X_\lambda$ be a family of vector fields defined on an open set $\Omega$ in $\R^n,$ with
	$\lambda\in  W,$ an open set of $\R^p.$ This family is assumed of class of differentiability
	$\mathcal{C}^k, k\geqslant 1,$ or of analytic class  (we mean that it is represented by a map
	$\Omega\times W\rightarrow \R^n$ of this given class).
	We call $\varphi(t,m,\lambda)$ its flow and consider $(t_0,m_0,\lambda_0)\in \R\times \Omega\times W$ such that the segment of $\varphi([0,t_0],m_0,\lambda_0)$ is contained in $\Omega.$
	Then the flow $\varphi(t,m,\lambda)$ is defined in a neighborhood $U$ of
	$(t_0,m_0,\lambda_0)$ in
	$\R\times \Omega\times W$ and has on $U$ the same class of differentiability as the family $X_\lambda.$
\end{theorem}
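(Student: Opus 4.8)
The plan is to reduce Theorem~\ref{th-Cauchy} to the classical theorem on the smooth (resp.\ analytic) dependence of the solution of an \emph{autonomous} ODE on its initial condition, and then to propagate that local statement along the given compact piece of trajectory by a finite-cover argument.

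First I would absorb the parameter $\lambda$ and the time variable into the phase space: on $\Omega\times W\times\R\subset\R^{n+p+1}$ consider the autonomous vector field $\widehat X$ given by $\dot x=X_\lambda(x)$, $\dot\lambda=0$, $\dot\tau=1$. Its first $n$ components are the components of the family $X_\lambda$ regarded as functions of $(x,\lambda)$ and the remaining ones are constants, so $\widehat X$ has precisely the differentiability (or analyticity) class of the family $X_\lambda$; moreover its flow is $\widehat\varphi(s,(m,\lambda,\tau_0))=(\varphi(s,m,\lambda),\lambda,\tau_0+s)$, so the conclusion for $\varphi$ follows at once from the same conclusion for $\widehat\varphi$. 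It therefore suffices to prove the assertion for the autonomous field $\widehat X$. For the local version near a point $p$ I would run the Picard scheme: for $|t|$ small the flow is the unique fixed point of the operator $(\Psi u)(t)=p'+\int_0^t \widehat X(u(s))\,ds$ on continuous curves, and $\Psi$ is a uniform contraction as the base point $p'$ varies, which already gives joint continuity of $(t,p')\mapsto\widehat\varphi(t,p')$. To upgrade to $C^k$ I would invoke the fibre (uniform) contraction theorem: the candidate first derivative solves the variational equation $\dot Z=D\widehat X(\widehat\varphi)\,Z$, which is again the fixed point of a uniform contraction of class $C^{k-1}$ in the base point, and one bootstraps from order $1$ up to order $k$. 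For the analytic case I would rerun the same contraction in spaces of holomorphic curves, after extending $\widehat X$ to a holomorphic field on a complex neighbourhood of $p$ and using that a locally uniform limit of holomorphic maps is holomorphic; restriction to real arguments then yields real-analyticity (alternatively, Cauchy's method of majorants gives a convergent power-series solution directly).

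Finally I would globalise. The trajectory segment $\widehat\varphi([0,t_0],(m_0,\lambda_0,0))$ is compact and contained in the domain of $\widehat X$, so one can choose $0=s_0<s_1<\cdots<s_N=t_0$ such that, for each $j$, the points $\widehat\varphi(s_j,(m_0,\lambda_0,0))$ and $\widehat\varphi(s_{j+1},(m_0,\lambda_0,0))$ both lie in one of the local flow-boxes constructed above. Using the flow identity $\widehat\varphi(t+s,\cdot)=\widehat\varphi(t,\widehat\varphi(s,\cdot))$ I write the time-$t_0$ map as the finite composition of the local flows $\widehat\varphi(s_{j+1}-s_j,\cdot)$; a composition of $C^k$ (resp.\ analytic) maps being again of that class, $\widehat\varphi$, and hence $\varphi$, is defined and of the stated regularity on a neighbourhood $U$ of $(t_0,m_0,\lambda_0)$. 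The one genuinely delicate ingredient is the local $C^k$ and analytic dependence obtained through the contraction argument, in particular the identification of the solution of the variational equation with the true derivative of the flow; the reduction to the autonomous case and the finite-cover globalisation are routine bookkeeping.
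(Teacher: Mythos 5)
Your argument is correct, but there is nothing in the paper to compare it against: Theorem~\ref{th-Cauchy} is stated in Appendix~A without proof, as ``a basic result which is a corollary of the (real) Cauchy Theorem,'' i.e.\ the authors simply invoke the classical theorem on $C^k$ and analytic dependence of flows on initial conditions and parameters (as found, e.g., in Hartman or Arnold). What you have written is essentially the standard textbook proof of that classical result: suspension of $(\lambda,t)$ into the phase space to reduce to a single autonomous field of the same regularity class, local regularity via Picard iteration together with the fibre contraction theorem applied to the variational equation (resp.\ complexification or majorants in the analytic case), and globalisation along the compact trajectory segment $\varphi([0,t_0],m_0,\lambda_0)$ by a finite chain of flow boxes and the group property of the flow. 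All three steps are sound; the extra component $\dot\tau=1$ is redundant here since the family $X_\lambda$ is already autonomous in $x$ for each fixed $\lambda$, but it is harmless. You also correctly identify the one genuinely delicate point, namely proving that the fixed point of the variational contraction really is the derivative of the flow, which is where the fibre contraction theorem earns its keep. In short: a correct and complete proof sketch of a statement the paper leaves as a citation.
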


\begin{theorem}{(\bf Poincar\'e-Bendixson Theorem)}\label{P-B-theorem}
	One considers a $C^1$ vector field $X$ defined on an open set $\Omega$ of $\R^2$.  One assumes that a positive half-orbit $\gamma^+_m$ of the flow has a compact closure in $\Omega$. Then, $\omega(\gamma^+_m)$
	contains a singular point or is a closed (periodic) orbit. As the $\alpha$-limit sets of $X$ are the $\omega$-limit sets of $-X,$ one has a similar result for the negative half-orbits $\gamma^-_m$ of $X$.
\end{theorem}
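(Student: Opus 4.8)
The plan is to follow the classical argument for the Poincar\'e-Bendixson Theorem, which rests entirely on transversal segments and the Jordan curve theorem; the two-dimensionality of $\Omega$ enters at only one crucial point. First I would set up the local machinery: at any regular point $p_0$ of $X$ one constructs a small segment $\Sigma\ni p_0$ to which $X$ is everywhere transverse, together with a flow box, i.e.\ a neighborhood of $p_0$ in which the flow is conjugate to the trivial translation on $\Sigma\times(-\varepsilon,\varepsilon)$; this is the standard straightening lemma for $C^1$ vector fields, and it shows that every point of $\gamma^+_m$ sufficiently close to $\Sigma$ crosses $\Sigma$ transversally at a nearby time. The key planar fact is then the following monotonicity property: if $\gamma^+_m$ meets a transversal $\Sigma$ at points $p_1,p_2,\dots$ listed in order of increasing time, then $p_1,p_2,\dots$ is monotone along $\Sigma$. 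Indeed, the orbit arc from $p_1$ to $p_2$ together with the sub-segment $[p_1,p_2]\subset\Sigma$ forms a Jordan curve $J$; by the Jordan curve theorem $J$ separates the plane, and transversality of $X$ to $\Sigma$ forces the flow to cross $[p_1,p_2]$ in a fixed sense, so the orbit cannot return to the wrong component of $\R^2\setminus J$, whence $p_3$ lies beyond $p_2$ on the same side, and so on by induction.

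From the monotonicity property I would deduce that $\omega(\gamma^+_m)$ meets any transversal $\Sigma$ in at most one point: if $p\in\omega(\gamma^+_m)\cap\Sigma$, then $\gamma^+_m$ crosses $\Sigma$ at a sequence of times tending to $\tau_+(m)$ at points converging to $p$, and since these crossing points are monotone along $\Sigma$, $p$ is their only accumulation point, so $\omega(\gamma^+_m)\cap\Sigma=\{p\}$. Now suppose $\omega:=\omega(\gamma^+_m)$ contains no singular point; I must show it is a closed orbit. Since $\overline{\gamma^+_m}$ is compact in $\Omega$, the set $\omega$ is nonempty, compact, connected and invariant. Pick $p\in\omega$; its whole orbit lies in $\omega$, and $\omega$ being compact and free of singular points, $\omega(\gamma_p)\subset\omega$ is nonempty and contains a regular point $q$. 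Choosing a transversal $\Sigma$ at $q$, the orbit $\gamma_p$ accumulates at $q$ and hence meets $\Sigma$ infinitely often, but each such intersection point lies in $\omega\cap\Sigma=\{q\}$, so $\gamma_p$ returns to $q$ and is therefore a periodic orbit $\Gamma$.

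It remains to prove $\omega=\Gamma$. Fix $q\in\Gamma$ and a transversal $\Sigma$ at $q$; since $\Gamma\subset\omega$, the orbit $\gamma^+_m$ accumulates at $q$ and crosses $\Sigma$ at a monotone sequence $p_n\to q$, with $\omega\cap\Sigma=\{q\}$. The closed regions bounded by the orbit arc from $p_n$ to $p_{n+1}$ together with $[p_n,p_{n+1}]\subset\Sigma$ form a nested sequence whose intersection is exactly $\Gamma$, so $\gamma^+_m$ eventually remains in every neighborhood of $\Gamma$; consequently $\omega\subseteq\Gamma$, and therefore $\omega=\Gamma$. Applying everything to $-X$ gives the corresponding statement for the $\alpha$-limit set $\alpha(\gamma^-_m)$.

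The main obstacle is the monotonicity property of successive crossings of a transversal: this is the only genuinely two-dimensional ingredient and the single place where the Jordan curve theorem is used, so care is needed in arguing that the forward orbit cannot re-cross the Jordan curve built from an orbit arc and a transversal sub-segment. Once that is in hand, the remaining steps --- the flow-box construction, the at-most-one-point intersection of $\omega$ with a transversal, and the final nesting argument --- are routine bookkeeping with compactness, connectedness and invariance of $\omega$.
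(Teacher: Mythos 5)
The paper itself offers no proof of this statement: it is quoted in Appendix A as a classical result, with the remark that a more complete version may be found in Lefschetz \cite{L}, and only Corollary \ref{P-B-corollary} is actually used (and referred to \cite{RR} for its proof). So there is nothing in the paper to compare against, and I assess your argument as a stand-alone proof of the classical theorem. Most of it is the standard argument and is correct: the flow box at a regular point, the monotonicity of successive crossings of a transversal via the Jordan curve theorem, the consequence that $\omega:=\omega(\gamma^+_m)$ meets any transversal in at most one point, and the deduction that if $\omega$ contains no singular point then it contains a periodic orbit $\Gamma$ (take $p\in\omega$, a regular point $q\in\omega(\gamma_p)\subseteq\omega$, and a transversal at $q$).

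The last step, however, has a genuine flaw. The nested closed regions $\overline{V_n}$ bounded by the Jordan curves $J_n$ (orbit arc from $p_n$ to $p_{n+1}$ together with $[p_n,p_{n+1}]\subset\Sigma$) do \emph{not} have intersection equal to $\Gamma$: if the orbit spirals toward a circle $\Gamma$ from the outside, each trapping region contains the whole closed disk bounded by $\Gamma$, and so does their intersection; if it spirals from the inside, the trapping components are unbounded and their intersection contains the entire exterior of $\Gamma$. Consequently the nesting alone does not show that $\gamma^+_m$ eventually remains in every neighborhood of $\Gamma$; that assertion is essentially equivalent to $\omega\subseteq\Gamma$ and must be obtained the other way around. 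The standard repair uses connectedness of $\omega$: if $\omega\setminus\Gamma\neq\emptyset$, then since $\Gamma$ is compact and $\omega$ is connected, $\omega\setminus\Gamma$ cannot be closed in $\omega$, hence it has an accumulation point $x\in\Gamma$; choose a transversal $\Sigma'$ at $x$ and a point $z\in\omega\setminus\Gamma$ inside the flow box of $\Sigma'$. The orbit of $z$ lies in $\omega$ by invariance and must cross $\Sigma'$, necessarily at the unique point of $\omega\cap\Sigma'=\{x\}$; hence $z$ lies on the orbit of $x$, i.e.\ $z\in\Gamma$, a contradiction. This gives $\omega=\Gamma$ directly, and the convergence of the orbit to $\Gamma$ then follows from compactness. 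With that substitution your proof is complete.
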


We have the following corollary of the Poincar\'e-Bendixson Theorem which will be the version used in this paper:
\begin{corollary}\label{P-B-corollary}
	One considers a $\mathcal{C}^1$ vector field $X$ defined on an open set $\Omega$ of $\R^2,$ with flow $\varphi(t,m).$ One assumes that $\Omega$ has a compact closure $\bar \Omega$ in $\R^2$ and we write $\partial \Omega=\bar\Omega\setminus\Omega$ for its boundary.  One also assumes  that
	$\Omega$ is simply connected and that $X$ has no singular point (in $\Omega$). Let $m$ be any point in $\Omega.$ Then there exists a sequence $t_i\rightarrow \tau_+(m)$ and a point $p\in \partial\Omega$ such that $\varphi(t_i,m)\rightarrow p$ when $i\rightarrow +\infty.$ A similar result is true for the negative times.
\end{corollary}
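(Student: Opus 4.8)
The statement is a corollary of the Poincar\'e--Bendixson Theorem (Theorem \ref{P-B-theorem}), and the plan is to reduce to it by a contradiction argument, the whole point being to exclude the ``periodic orbit'' alternative using the two structural hypotheses: $\Omega$ simply connected, and $X$ without singular points in $\Omega$. Throughout, fix $m\in\Omega$ and work with the positive half-orbit $\gamma^+_m=\varphi([0,\tau_+(m)),m)$; the negative-time statement follows at the end by replacing $X$ with $-X$.

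\textbf{Step 1 (setup and the dichotomy).} First I would note that $\gamma^+_m\subset\Omega\subset\overline\Omega$ with $\overline\Omega$ compact, so the closure $\overline{\gamma^+_m}$ in $\R^2$ is compact and contained in $\overline\Omega$. Assume, for contradiction, that $\overline{\gamma^+_m}\cap\partial\Omega=\emptyset$; then $\overline{\gamma^+_m}$ is a compact subset of the \emph{open} set $\Omega$. By the classical continuation theorem for ODEs (a trajectory staying in a fixed compact subset of the domain of a $C^1$ field is defined for all forward time), this forces $\tau_+(m)=+\infty$, so $\gamma^+_m$ is a positive half-orbit with compact closure in $\Omega$. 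Theorem \ref{P-B-theorem} then applies: $\omega(\gamma^+_m)$ is nonempty and either contains a singular point of $X$ or is a periodic orbit. Since $\omega(\gamma^+_m)\subset\overline{\gamma^+_m}\subset\Omega$ and $X$ has no singular point in $\Omega$, the only surviving possibility is $\omega(\gamma^+_m)=\Gamma$, a periodic orbit contained in $\Omega$.

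\textbf{Step 2 (excluding the periodic orbit --- the heart of the matter).} The orbit $\Gamma$ is a Jordan curve in $\R^2$; let $U$ be the bounded component of $\R^2\setminus\Gamma$, so $\partial U=\Gamma$ and $\overline U=U\cup\Gamma$ is a closed topological disk. I claim $\overline U\subset\Omega$: if some $q\in U$ were not in $\Omega$, the loop $\Gamma$ would have winding number $\pm1$ about $q$, hence would not be contractible in $\R^2\setminus\{q\}\supset\Omega$, contradicting the simple connectedness of $\Omega$; therefore $U\subset\Omega$ and $\overline U\subset\Omega$. Now $X$ is a $C^1$ vector field defined on a neighbourhood of $\overline U$, having $\Gamma=\partial U$ as a periodic orbit and having \emph{no} zero in $U$. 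This is impossible: the Poincar\'e index of a planar vector field along one of its closed orbits equals $+1$, while by the index formula it must equal the sum of the indices of the zeros of $X$ inside $U$, which is $0$ since there are none --- equivalently, every closed orbit of a planar vector field encloses at least one equilibrium. This contradiction shows that the assumption $\overline{\gamma^+_m}\cap\partial\Omega=\emptyset$ is untenable.

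\textbf{Step 3 (conclusion).} Since $\overline{\gamma^+_m}\cap\partial\Omega$ is nonempty, pick $p$ in it and a sequence of orbit points $\varphi(s_i,m)\to p$. If the times $s_i$ did not converge to $\tau_+(m)$, a subsequence would converge to some $s^*<\tau_+(m)$, giving $\varphi(s_i,m)\to\varphi(s^*,m)\in\Omega$, which contradicts $\varphi(s_i,m)\to p\in\partial\Omega$; hence $s_i\to\tau_+(m)$, as required. Applying this to $-X$ (whose forward orbits are the backward orbits of $X$, with the same phase portrait hypotheses) gives the analogous statement for negative times. The expected main obstacle is Step 2: it is precisely there that both hypotheses --- simple connectedness, through the planar topology fact $\overline U\subset\Omega$, and the absence of equilibria, through the index-theoretic fact that a closed orbit must surround one --- are essential; the remaining steps are routine ODE and point-set arguments.
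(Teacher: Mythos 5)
The paper does not actually prove this corollary --- it defers the argument to the reference \cite{RR} (``in preparation''), so there is no in-text proof to compare against. Your proposal is correct and self-contained, and it follows what is surely the intended route: the escape-from-compact-sets lemma to get $\tau_+(m)=+\infty$ under the contradiction hypothesis, the paper's Theorem \ref{P-B-theorem} to reduce to the dichotomy ``singular point or periodic orbit'', and then the two structural hypotheses to kill both alternatives --- absence of equilibria kills the first directly, and simple connectedness (via the winding-number argument showing the closed disk bounded by the periodic orbit lies in $\Omega$) combined with the classical index fact that a closed orbit of a planar $C^1$ field has index $+1$ and hence must enclose a zero kills the second. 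All three steps are sound; the only places where care is needed (that the closure of the half-orbit avoiding $\partial\Omega$ is a compact subset of the \emph{open} set $\Omega$, and that the approximating times $t_i$ can be forced to tend to $\tau_+(m)$ because a bounded subsequence of times would produce a limit point inside $\Omega$) are both handled explicitly. One could replace the index-theoretic exclusion of the periodic orbit by a purely Poincar\'e--Bendixson argument inside the enclosed disk, but your version is cleaner and uses only classical facts, so nothing is missing.
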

We refer to \cite{RR} for a detailed proof of Corollary \ref{P-B-corollary}.
\section{Proof of Proposition \ref{prop-phase-portrait}}
The vector field $X^E_c$ has a unique singular point at the origin $O$, two trajectories whose $\omega$-limit   and two trajectories whose $\alpha$-limit set is the origin  $O$. Each other trajectory goes from infinity to infinity.  One can expect that these properties imply that  $X^E_c$ is topologically equivalent to an hyperbolic linear saddle vector field.  Unfortunately,  this is not true in general as that was already detected by Bendixson in \cite{B}.
Then, in the proof given below, we will have to use also more specific properties of the vector field $X^E_c$.
\subsection{Construction of the equivalence on $Q$}\label{subsect-equivalence-Q}
In order to simplify the notation, and as $c,\alpha,\Lambda$ are assumed to be fixed,  we will write $X$ for the vector $X_c$.  We will denote by $L$ the unique stable manifold of $X$
 and $y(x)$ the  function whose graph is $L$; the latter function reads $y_c(x)$ in Section \ref{sect-topological-approach}. We also denote by  $L^0$ the stable manifold of the linear field
 $X^0=y\frac{\partial }{\partial x}+x\frac{\partial }{\partial y}$, restricted to  the quadrant $Q$,  which is the graph  $\{y=y^0(x)=-x\}$.

 The stable manifold $L:=\{y=y(x)\}$ splits $Q$ into two invariant regions:
  $Q\setminus L=Q^+\cup Q^-$ where $Q^+=\{x>0, y(x)<y\leqslant 0\}$ and
  $Q^-=\{x\geqslant 0, y< y(x)\}$.
   To begin with, we want to establish some properties for the orbits of $X$, which are trivially verified by the orbits of the linear vector field $X^0$.

 \begin{lemma}\label{lem-orbits}
 We consider the vector field $X$ defined  on $Q$.\\
\noindent (1) Any orbit $\gamma$ contained into $Q^+$ is a graph
  $\{y=y_\gamma(x)\}$ over an interval $[x_\gamma,+\infty)$  for some $x_\gamma$ such that  $0<x_\gamma<x$.\\
\noindent (2) Any orbit contained into $Q^-$ is a graph  $\{y=y_\gamma(x)\}$ over $[0,+\infty)$. Moreover if $(x_0,y_0)\in \gamma$, we have that: $y_0-c/\Lambda x_0\leqslant y_\gamma(0)<0$.
 \end{lemma}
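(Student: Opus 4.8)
The lemma rests on two elementary observations. First, on the open quadrant $\{x>0,\ y<0\}$ the horizontal component of $X$ is $x'=y<0$, so $x$ is a strictly monotone coordinate along every orbit (decreasing in forward time); hence any orbit meeting this open set is a graph $\{y=y_\gamma(x)\}$ over an interval of $x$-values, and it only remains to locate the two ends of that interval. Second, the stable manifold $L=\{y=y_c(x)\}$ is itself an orbit and $X$ is analytic on $\{x>0\}$, so by uniqueness no other orbit crosses $L$; this makes $Q^+$ and $Q^-$ invariant for the (backward-)restricted flow, and it shows that an orbit lying strictly on one side of $L$ cannot be a trajectory converging to the origin, since by Lemma \ref{lem-unicity} the unique such trajectory is $L$ itself.

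For (1), fix $m\in Q^+$ with $y(m)<0$. A sign argument shows the backward half-orbit through $m$ never meets $Ox$: if $\bar t<0$ were the largest backward time with $y=0$, then on $(\bar t,0]$ one has $y<0$, hence $x$ is decreasing there and $\bar x:=x(\bar t)>x(m)>0$, so $y'(\bar t)=\bar x^{\,\alpha}/\Lambda>0$, contradicting that $y<0$ just after $\bar t$. Thus $y<0$ and $x'=y<0$ along the whole backward half-orbit, so $x$ increases backward to some $X_\ast\in(0,+\infty]$. If $X_\ast<+\infty$, the backward half-orbit stays in the compact, singularity-free set $\{x(m)\le x\le X_\ast,\ y_c(x)\le y\le 0\}$; the extension theorem then forces infinite backward time, and Poincar\'e--Bendixson (Theorem \ref{P-B-theorem}) makes the $\alpha$-limit set a singular point or a periodic orbit — impossible in that set ($O$ is excluded, and any periodic orbit would have to enclose $O$). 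So $X_\ast=+\infty$. Forward, $x$ decreases to $x_\gamma:=\inf_\gamma x\ge 0$; the case $\tau_+=+\infty$ with $x_\gamma>0$ is ruled out because $\int y\,dt$ finite with $y'$ bounded forces $y\to 0$, so the orbit would converge to the non-equilibrium point $(x_\gamma,0)$, and the case $x_\gamma=0$ is ruled out by Lemma \ref{lem-unicity}. Hence $y_\gamma$ is defined on $[x_\gamma,+\infty)$ with $0<x_\gamma<x$ for every interior point $(x,y)\in\gamma$, the orbit exiting $Q^+$ across $(x_\gamma,0)$ in finite forward time (it cannot exit across $L$, which is invariant, nor across $\{x=0\}$, which is not in $\overline{Q^+}$).

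For (2), fix $m\in Q^-$, so $y<y_c(x)<0$ along $\gamma$; again $x'=y<0$, whence $\gamma=\{y=y_\gamma(x)\}$. On the graph, $y_\gamma'(x)=\tfrac{c}{\Lambda}+\tfrac{x^\alpha}{\Lambda\,y_\gamma(x)}\le\tfrac{c}{\Lambda}$ since $x\ge 0$ and $y_\gamma<0$; integrating from $0$ to $x_0$ for $(x_0,y_0)\in\gamma$ gives $y_0-y_\gamma(0)\le\tfrac{c}{\Lambda}x_0$, i.e. the asserted bound, and in particular $y_\gamma$ is bounded on $[0,x_0]$. This boundedness lets the forward trajectory be continued up to a finite $\tau_+$ at which $x$ reaches $0$: it cannot stall at $x_\infty>0$ (there $y\le y_c(x_\infty)<0$, so $\int y\,dt$ diverges), cannot converge to $O$ (Lemma \ref{lem-unicity}), and cannot meet $L$ (uniqueness on $\{x>0\}$). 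Hence $y_\gamma(0):=\lim_{x\to 0^+}y_\gamma(x)$ exists, satisfies $y_\gamma(0)\ge y_0-\tfrac{c}{\Lambda}x_0$, and is $<0$ (equality would make $\gamma$ reach $O$). Backward, $x$ increases to $X_\ast$; Lemma \ref{from_below} ($y_c(x)>-\tfrac1c x^\alpha$) together with the fact that $X$ is directed ``downward'' across the curve $\{y=-\tfrac1c x^\alpha\}$ prevents $y$ from escaping to $-\infty$ in backward time, so if $X_\ast<+\infty$ the backward half-orbit is again confined to a compact singularity-free set and Poincar\'e--Bendixson yields a contradiction exactly as in (1); therefore $X_\ast=+\infty$ and $y_\gamma$ is defined on $[0,+\infty)$.

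The main obstacle, in both parts, is the pair of ``trapping'' issues: ruling out a finite $\sup x$ along an orbit, which requires exhibiting the correct compact invariant set — immediate in $Q^+$ but in $Q^-$ genuinely uses the a priori lower bound on $y$ from Lemma \ref{from_below} — and then a Poincar\'e--Bendixson/index argument; and distinguishing a genuine exit through a boundary point of $Q$ from a spurious ``convergence to a non-equilibrium point,'' which is dispatched by the observation that $\int|y|\,dt<+\infty$ on a semi-infinite interval forces $y\to 0$, leaving convergence to $O$ as the only alternative — and that is excluded by the uniqueness of the stable manifold, Lemma \ref{lem-unicity}.
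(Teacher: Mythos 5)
Your proof is correct and follows essentially the same route as the paper's: monotonicity of $x$ along orbits, trapping regions plus the Poincar\'e--Bendixson corollary to control both ends, and uniqueness of the stable manifold to exclude convergence to $O$; in particular your differential inequality $dy_\gamma/dx\le c/\Lambda$ is exactly the paper's transversality of $X$ to the direction $Z=(1,c/\Lambda)$ (the side $[B,C]$ of its curved rectangle $\mathcal{R}$), and it yields the same bound $y_\gamma(0)\ge y_0-\frac{c}{\Lambda}x_0$. The only real divergence is the backward extension in $Q^-$, where the paper bounds $y_\gamma$ from below via the contraction $d(y_c-y_\gamma)/dx<0$ toward the stable manifold, whereas you use the nullcline $\{y=-\frac{1}{c}x^\alpha\}$; both are valid.
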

 \begin{proof}
 The point (1) follows from the results  of  Section \ref{sect-topological-approach}:  a  trajectory starting at a point $(x,y)\in Q^+$ cannot tend toward $O$ because $L$ is the unique stable manifold in $Q$  by Lemma \ref{lem-exist-global-s-m}; then, as  a consequence of  Lemma \ref{lem-exist-local-s-m}, this trajectory must attain a point
 $(x_\gamma,0)$ with $0<x_\gamma<x$. Finally, as in the proof used in Lemma \ref{lem-exist-global-s-m} for the stable manifold, it is easy to prove  that the corresponding orbit $\gamma$ is a graph $\{y=y_\gamma(x)\}$ above $[x_\gamma,+\infty)$.

 We now consider the orbits in $Q^-$. We  first claim that, in the interior of  the whole quadrant $Q$, the vector field $X$ is transverse to the direction of the vector $Z=(1,c/\Lambda)$ and directed toward the left. To see this, we  consider the orthogonal vector field
  $Z^\perp=(-c/\Lambda,1)$ and compute the scalar product
  \begin{equation*}
  <Z^\perp,X>=-\frac{c}{\Lambda}y+\Big(\frac{c}{\Lambda}y+\frac{1}{\Lambda}x^\alpha\Big)=\frac{1}{\Lambda}x^\alpha,
  \end{equation*}
  which is strictly positive if $x>0.$ The claim follows.

 Consider  now an orbit $\gamma$ in $Q^-.$  By an argument similar to the one used in Lemma \ref{lem-exist-global-s-m} it is easy to see that $\gamma$ is a graph above some maximal interval $I$  in the $Ox$-axis. We want firstly to prove that if $(x_0,y_0)\in \gamma$, then $[0,x_0]\subset I$ (this means that $I$ is closed on the left with $0$ as end point).  To show this point we consider the  {\it curved rectangle}  $\mathcal{R}=[O,A,B,C]$, where $A=(x_0,y(x_0)), B=(x_0,y_0)$ and  $C$  belongs to  the $Oy$-axis (observe that necessarily we have that $y_0<y(x_0)$). The side $[O,A]$ is an arc in $L,$  the three other sides are linear segments  and $[B,C]$ as a direction parallel to the vector $Z$ above, see Figure \ref{fig-rect-R}. The vector field X is tangent to $[O,A],$ transverse toward the left along
 $[A,B], [B,C)$  and $[C,O)$: this means that $X$ is entering $\mathcal{R}$ along $[A,B]\cup [B,C)$ and is going out $\mathcal{R}$ along $[C,O)$. Now, as in the point (1), we can use the results of Section  \ref{sect-topological-approach} to obtain that the trajectory of the point
 $(x_0,y_0)$ must reach a point of $[C,O)$ in a finite time.

 This implies that $\gamma$ is a graph $\{y=y_\gamma(x)\}$ over a maximal interval
 $[0,x_+)$. Moreover, as the ordinate of $C$ is equal to $y_0-c/\Lambda x_0$ that is less than $y_\gamma(0)$, we have that $y_\gamma(0)\geqslant y_0-\frac{c}{\Lambda}x_0,$
 as stated in the Lemma.

  We prove  now  that $x_+=+\infty$   by using the same idea as in the proof
 of Lemma \ref{lem-unicity}.  We have that $d\Big(y(x)-y_\gamma(x)\Big)/dx<0$ on
 $(0,x_+),$ the common domain of existence of $y(x)$ and $y_\gamma(x)$.  This inequality implies that, for all $x<x_+$, we have that the function $y(x)-y_\gamma(x)$ is decreasing and then that:
 \begin{equation*}
 y_\gamma(x)>y(x)-y(x_0)+y_\gamma(x_0).
 \end{equation*}
  Passing to the limit $x_+$ this implies that the $\alpha$-limit set of $\gamma$ is contained in the closed interval $J=\{x_+\}\times [y(x_+)-y(x_0)+y_\gamma(x_0),0]$ and then is non-empty and must contain a singular point, as consequence of the Poincar\'e-Bendixson Theorem.
 As $X$ has no singular point in a neighborhood of $J$, this is impossible.
 \end{proof}
 \vskip5pt
\begin{figure}[htp]
\begin{center}
   \includegraphics[scale=0.6]{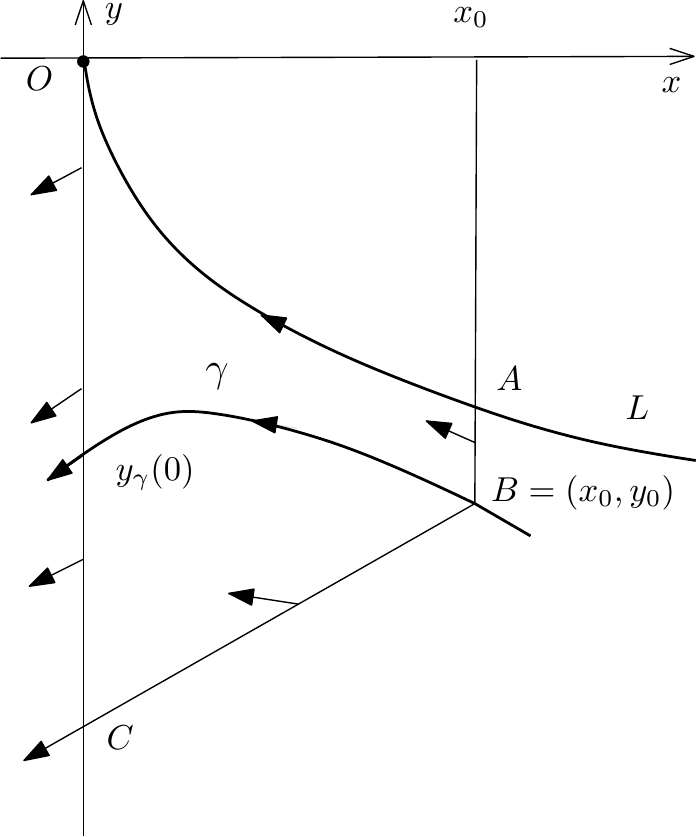}\\
  \caption{\it Rectangle $\mathcal{R}$}\label{fig-rect-R}
  \end{center}
\end{figure}
\vskip5pt
 Lemma \ref{lem-orbits} allows to define  two transition functions: for
$(x,y)\in Q^+$, the function $T_x(y)$ which is the value $x_\gamma$ defined in Lemma \ref{lem-orbits} for the orbit $\gamma$
  through the point $(x,y)$; for
  $(x,y)\in Q^-$, the function $S_x(y)$ where $(0,S_x(y))$ is the point reached  on the axis $Oy$ by the orbit $\gamma$ through $(x,y)$ (see Figure \ref{fig-H}). Clearly, these functions are analytic on $Q^+$ and $Q^-$ respectively. The following lemma shows that they can be continuously extended on $L$ with the common value $0$:

  \begin{lemma}\label{lem-T-S}
  Let $\bar Q^+=Q^+\cup L= \{x\geqslant 0,y(x)\leqslant y\leqslant 0\}$ and
  $\bar Q^-=Q^-\cup L= \{x\geqslant 0,y\leqslant y(x)\}$ be the closure of $Q^+$ and $Q^-$. Then $T_x(y)$  and $S_x(y)$ can be  continuously extended  by $0$ on $L$.
 \end{lemma}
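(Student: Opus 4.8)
The plan is to note that the only substantive point is continuity at the points of $L$: on $Q^+$ the function $T_x$ is already analytic and on $Q^-$ the function $S_x$ is already analytic, so, since $T_x$ is defined on $\bar Q^+=Q^+\cup L$ and $S_x$ on $\bar Q^-=Q^-\cup L$, what has to be proved is that $T_x(m)\to 0$ as $m\to p$ through $Q^+$, and $S_x(m)\to 0$ as $m\to p$ through $Q^-$, for every $p=(x_0,y(x_0))\in L$. The case $p=O$ is immediate from Lemma \ref{lem-orbits}: for $m=(x,y)\to O$ inside $Q^+$ one has $0<T_x(y)<x\to 0$ by part~(1), and for $m=(x,y)\to O$ inside $Q^-$ one has $0>S_x(y)\geqslant y-\frac{c}{\Lambda}x\to 0$ by part~(2). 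So the remaining work concerns a point $p=(x_0,y(x_0))$ with $x_0>0$.

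For such a $p$ I would argue by continuous dependence on initial conditions. Given $\varepsilon>0$, since the positive orbit of $p$ converges to $O$ (Lemma \ref{lem-exist-global-s-m}), pick a time $\tau$, $0<\tau<\tau^+(p)$, such that the first coordinate of $\varphi_c(\tau,p)$ is $<\varepsilon$. The arc $\varphi_c([0,\tau],p)$ is a compact subset of the open quadrant $\{x>0,\ y<0\}$, on which $X_c$ is analytic; hence Theorem \ref{th-Cauchy} furnishes a neighbourhood $V$ of $p$ such that, for every $m\in V$, the segment $\varphi_c([0,\tau],m)$ is defined, remains in $\{x>0,\ y<0\}$, and $\varphi_c(\tau,m)$ still has first coordinate $<\varepsilon$. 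Then for $m=(x,y)\in V\cap Q^+$, invariance of $Q^+$ puts $\varphi_c(\tau,m)$ on the orbit $\gamma$ through $m$, which by Lemma \ref{lem-orbits}(1) is the graph of a function over $[x_\gamma,+\infty)$ with $x_\gamma=T_x(y)$; consequently the first coordinate of $\varphi_c(\tau,m)$ is at least $x_\gamma$, which forces $0<T_x(y)<\varepsilon$. This gives continuity of the extended $T_x$ at $p$.

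For $S_x$ at the same $p$ I would repeat this, additionally using the quantitative estimate in Lemma \ref{lem-orbits}(2): if $(x',y')\in\gamma$ then $y'-\frac{c}{\Lambda}x'\leqslant y_\gamma(0)=S_x(y)<0$, whence $|S_x(y)|\leqslant|y'|+\frac{c}{\Lambda}x'$. Given $\delta>0$, choose $\tau<\tau^+(p)$ with $\varphi_c(\tau,p)=(x'',y'')$ so close to $O$ that $|y''|+\frac{c}{\Lambda}x''<\frac{\delta}{2}$, then use Theorem \ref{th-Cauchy} to get a neighbourhood $V$ of $p$ on which $\varphi_c([0,\tau],m)\subset\{x>0,\ y<0\}$ and $\varphi_c(\tau,m)=(x',y')$ satisfies $|y'|+\frac{c}{\Lambda}x'<\delta$, and finally apply the estimate above at the point $(x',y')$, which lies on the orbit $\gamma$ of $m$ (legitimate once $m\in Q^-$, since $Q^-$ is invariant), to conclude $|S_x(y)|<\delta$.

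The one delicate step is this appeal to continuous dependence near $L$: because $X_c$ is not $C^1$ along $\{x=0\}$, Theorem \ref{th-Cauchy} can be invoked only so long as the orbit arc $\varphi_c([0,\tau],p)$ and its small perturbations stay inside the open quadrant $\{x>0,\ y<0\}$ where $X_c$ is analytic, which is exactly why one stops the orbit of $p$ at a point $\varphi_c(\tau,p)$ with positive first coordinate rather than trying to push $\tau$ up to $\tau^+(p)$. Everything else is either the direct monotonicity bound of Lemma \ref{lem-orbits} near $O$, or the standard compactness argument that upgrades the local statement of Theorem \ref{th-Cauchy} to continuous dependence along a compact orbit segment.
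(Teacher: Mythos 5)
Your proof is correct, but for the main case (continuity at a point $p=(x_0,y(x_0))\in L$ with $x_0>0$) it follows a genuinely different route from the paper's. The paper argues in three steps: first it fixes $x_0$ and shows $T_{x_0}(y)\to 0$ as $y\to y(x_0)$ by a monotonicity-plus-contradiction argument (if the monotone limit $x_T$ were positive, the backward orbit of $(x_T,0)$ would meet $\{x=x_0\}$ strictly above $y(x_0)$, contradicting the definition of the limit); it then upgrades to joint continuity in $(x,y)$ by writing $T_x=T_{x_0}\circ T_x^{x_0}$ with $T_x^{x_0}$ an analytic transition map furnished by Theorem \ref{th-Cauchy}; and it treats the origin separately with the a priori bounds $0\leqslant T_x(y)\leqslant x$ and $|S_x(y)|\leqslant |y|+\frac{c}{\Lambda}x$. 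You instead follow the forward orbit of $p$ itself down toward $O$ (using Lemma \ref{lem-exist-global-s-m}), stop it at a time $\tau$ where it is $\varepsilon$-close to the origin, invoke continuous dependence on the compact arc $\varphi_c([0,\tau],p)\subset\{x>0,y<0\}$, and then read off the smallness of $T_x(y)$ (resp.\ $S_x(y)$) from the position of $\varphi_c(\tau,m)$ via the graph description and the quantitative bound of Lemma \ref{lem-orbits}. This gives one-variable and two-variable continuity at every point of $L\setminus\{O\}$ in a single stroke, at the price of having to justify continuous dependence along a long (but compact) orbit segment rather than only a short transition between two vertical sections; your closing remark correctly identifies that this is legitimate precisely because the arc stays in the open quadrant where $X_c$ is analytic, and that one must not let $\tau\to\tau^+(p)$. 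The treatment of the origin is identical to the paper's Step 3. Both arguments rest on the same two ingredients (Theorem \ref{th-Cauchy} and Lemma \ref{lem-orbits}); yours packages them more uniformly, while the paper's version isolates the elementary monotonicity fact and keeps the use of continuous dependence strictly local.
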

 \begin{proof} To prove the  continuity along $L$,  we will proceed in three steps.

 {\it Step 1. We fix a value $x_0>0$ and prove that $T_{x_0}(y)$ and
 $S_{x_0}(y)$ tends toward $0$ when $y$ tends toward $y(x_0)$.} We begin with $T_{x_0}(y).$ Let us observe that $y\mapsto T_{x_0}(y)$ is decreasing from $T_{x_0}(0)=x_0$  when $y$ decreases from $y=0$. Let $x_T\geqslant 0$ be the limit when $y\rightarrow y(x_0)$. We claim that $x_T=0$. If not, the orbit of $(x_T,0)$ for the vector field $-X$ reaches the vertical axis $\{x=x_0\}$ at some point $(x_0,y_T)$ with $y(x_0)<y_T<0$. This is clearly in contradiction with the definition of $x_T$.  As a  conclusion, the function $y\mapsto T_{x_0}(y)$ extends continuously by $0$ at $y=y(x_0)$. We have exactly the same proof for $S_{x_0}(y)$.

{\it Step 2.  The extensions are continuous on $L\setminus \{O\}$ as functions of the two variables $(x,y)$.} Let us prove the continuity of the extension of $T_x(y)$ at a point $(x_0,y(x_0))\in L$ with  $x_0>0$. The idea is that we can compute
 $T_x$ in terms  of $T_{x_0}$  and the transitions along the flow of $X$. If we denote by
 $\Sigma_x$ the vertical  section by $x$ (parameterized by $y$), there is an analytic transition along the flow of $X$:  $y\rightarrow T^{x_0}_x(y)$ from $\Sigma_x$
  to  $\Sigma_{x_0}$ such that
 $T^{x_0}_x(y(x))=T_x(y(x_0))$. Moreover it is clear that
 \begin{equation*}
 T_x(y)=T_{x_0}\circ T_x^{x_0}(y).
 \end{equation*}
 As the extension of $T_{x_0}$ is continuous in $y$ at $y(x_0)$ and $T_x^{x_0}(y)$ is analytic  in $(x,y)$ at $(x_0,y(x_0))$, the above formula shows that the extension of $T_x(y)$  is continuous in $(x,y)$ at $(x_0,y(x_0))$.  Exactly the same proof can be made for $S_x(y)$.

 {\it Step 3. Continuity of the extension at the origin $O$.} The fact that $T_x(y)\rightarrow 0$ for $(x,y)\rightarrow (0,0)$ in $\bar Q^+$ follows from the inequality $0\leqslant T_x(y)\leqslant x$. For the function $S_x(y)$, we apply the inequality
  $|S_x(y)|\leqslant |y|+\frac{c}{\Lambda}|x|$ which is an equivalent form of the  inequality states in the point (2) of Lemma \ref{lem-orbits}.
 \end{proof}
\textit{From now on and for simplicity, we will also write $T_x(y)$ and $S_x(y)$ for the continuous extensions of these functions on $\bar Q^+$ and $\bar Q^-$, i.e. along the curve $L$.}
 For each $x>0$ and as it is monotonic,  the map $y\mapsto T_x(y)$ is an homeomorphism from the interval $[0,y(x)]$ to the interval $[x,0]$.  Also, for each $x\geqslant 0$,  the map $y\mapsto S_x(y)$ is an homeomorphism from the interval $(-\infty,y(x)]$ to the interval $(-\infty,0]$. Moreover we have proved in Lemma \ref{lem-T-S} that these $x$-families of homeomorphisms are continuous with the limits $S_0(y)\equiv y$ and $T_0(0)=0$ (the interval $[0,y(0)]$ is reduced to the point $\{0\}$. This allows to define the inverse homeomorphisms $T^{-1}_x(z)$ and $S^{-1}_x(z)$ defined respectively for $0\leqslant z\leqslant x$ and $z\in (-\infty,0].$  These $x$-families are clearly analytic for $z\not =0.$ we prove now that they are  continuous in $(x,z)$ at $z=0$:
 \begin{lemma}\label{lem-T(-1)-S(-1)}
 The inverse maps  $T^{-1}_x(z)$ and $S^{-1}_x(z)$ have a  continuous extension for $z=0$ by $T^{-1}_0(0)=0$ and $S^{-1}_0(z)\equiv z$.
 \end{lemma}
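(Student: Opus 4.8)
The plan is to obtain the continuity of the inverse maps from the joint continuity of $T_x(y)$ on $\bar Q^+$ and of $S_x(y)$ on $\bar Q^-$ (Lemma~\ref{lem-T-S}), combined with the strict monotonicity in $y$ of each $T_x$ and each $S_x$; in other words, I want to invoke the standard principle that a continuously varying family of monotone homeomorphisms of intervals has a continuously varying family of inverses. The only subtlety is that the source interval $[0,x]$ of $T^{-1}_x$ collapses to $\{0\}$ as $x\to 0$, and that the right endpoint $y(x)$ of the source $(-\infty,y(x)]$ of $S^{-1}_x$ slides to $0$; both degenerations will be controlled by the elementary bounds of Lemma~\ref{lem-orbits} together with the continuity and monotone decay $y_c(x)\to 0$ (Lemma~\ref{lem-global-yc}).

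For $T^{-1}$: analyticity away from $z=0$ has already been noted, so only the points $(x_0,0)$ are at issue, and there $T^{-1}_{x_0}(0)=y(x_0)$ since the orbit through $(x_0,y(x_0))$ is $L$, which tends to the origin, so $T_{x_0}(y(x_0))=0$. Given $(x_n,z_n)\to(x_0,0)$ with $z_n\in[0,x_n]$, I set $y_n:=T^{-1}_{x_n}(z_n)\in[y(x_n),0]$; since $y(x_n)\to y(x_0)$ the sequence $(y_n)$ is bounded, and any subsequential limit $\bar y$ satisfies $T_{x_0}(\bar y)=\lim T_{x_{n_k}}(y_{n_k})=\lim z_{n_k}=0$ by joint continuity, hence $\bar y=y(x_0)$ because $T_{x_0}$ is a bijection onto $[0,x_0]$ sending $y(x_0)$ to $0$. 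Uniqueness of the limit point gives $y_n\to y(x_0)=T^{-1}_{x_0}(0)$. At the corner $(0,0)$ one squeezes: since $T_x$ maps $[y(x),0]$ onto $[0,x]$, we have $T^{-1}_x(z)\in[y(x),0]$ for every $z\in[0,x]$, and $y(x)\to 0$, so $T^{-1}_x(z)\to 0$; this also pins down $T^{-1}_0(0)=0$.

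For $S^{-1}$, whose domain $(-\infty,0]$ is fixed and for which $S^{-1}_0(z)\equiv z$ because $S_0(y)\equiv y$: away from $z=0$ the map is analytic, so again only $\{z=0\}$ matters. At $(x_0,0)$ with $x_0>0$ I run the same subsequential argument, the only addition being a uniform lower bound for $y_n:=S^{-1}_{x_n}(z_n)$: choosing $y^\ast<y(x_0)$, joint continuity gives $S_{x_n}(y^\ast)\to S_{x_0}(y^\ast)<0$ (and $y^\ast<y(x_n)$ for $n$ large, so $y^\ast$ lies in the domain of $S_{x_n}$), hence $S_{x_n}(y^\ast)<z_n$ for $n$ large and, by monotonicity, $y_n>y^\ast$; then $y_n\in(y^\ast,y(x_n)]$ is bounded and any limit point $\bar y$ has $S_{x_0}(\bar y)=0$, i.e.\ $\bar y=y(x_0)=S^{-1}_{x_0}(0)$. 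At $(0,z_0)$ with $z_0<0$: for any $\delta>0$ with $z_0+\delta<0$, the points $(x,z_0\pm\delta)$ lie in $\bar Q^-$ for all small $x$ (since $y(x)\to 0>z_0+\delta$), so joint continuity gives $S_x(z_0\mp\delta)\to z_0\mp\delta$; hence for $(x,z)$ near $(0,z_0)$ one has $S_x(z_0-\delta)<z<S_x(z_0+\delta)$, and monotonicity of $S_x$ yields $z_0-\delta<S^{-1}_x(z)<z_0+\delta$. Letting $\delta\to 0$ gives $S^{-1}_x(z)\to z_0=S^{-1}_0(z_0)$; the remaining value $z_0=0$ is handled by the one-sided version of this squeeze (upper bound $S^{-1}_x(z)\le y(x)\to 0$, lower bound from the comparison point $-\delta$).

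The main obstacle is the bookkeeping at the degenerate fibers: one must check that the comparison data fed into Lemma~\ref{lem-T-S} — the value $y^\ast$ for $S^{-1}$ near $(x_0,0)$, the values $z_0\pm\delta$ for $S^{-1}$ near $(0,z_0)$, and the interval $[y(x),0]$ for $T^{-1}$ near $(0,0)$ — really belong to $\bar Q^+$ or $\bar Q^-$ for all sufficiently small $x$, which is exactly what $y_c(x)\to 0$ and the one-sided estimates of Lemma~\ref{lem-orbits} guarantee. Once these inclusions are secured, each case reduces to the elementary fact that a monotone homeomorphism has a continuous inverse, applied uniformly along the family.
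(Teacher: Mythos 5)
Your argument is correct, and it reaches the conclusion by a genuinely different route from the paper. The paper's own proof is two lines long: it invokes the contraction property of the vertical distance between two orbit-graphs as $x$ increases (the computation $\frac{d}{dx}(y_2-y_1)=\frac{x^\alpha}{\Lambda}\bigl(\frac{1}{y_2}-\frac{1}{y_1}\bigr)<0$ from Lemma \ref{lem-unicity}, already exploited in Lemma \ref{lem-orbits}), applied to the pair consisting of the stable manifold $L$ and the orbit defining $T^{-1}_x(z)$ (comparison started at abscissa $z$, where the gap is $|y(z)|$), resp.\ $S^{-1}_x(z)$ (comparison started at abscissa $0$, where the gap is $|z|$). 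This yields the quantitative bounds $|T^{-1}_x(z)-y(x)|\leqslant |y(z)|$ and $|S^{-1}_x(z)-y(x)|\leqslant |z|$, i.e.\ uniform-in-$x$ convergence to $y(x)$ as $z\to 0$, from which continuity at $z=0$ follows via the continuity of $y(\cdot)$ and $y(0)=0$ — notably without ever using Lemma \ref{lem-T-S}. You instead invert the direct maps: compactness of the relevant $y$-intervals, strict monotonicity and injectivity of $T_{x_0}$, $S_{x_0}$, and the joint continuity of $T$ and $S$ on $\bar Q^+$, $\bar Q^-$ established in Lemma \ref{lem-T-S}, together with sandwich arguments at the degenerate fibers. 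Both proofs are sound; the paper's buys brevity and an explicit modulus of continuity uniform in $x$, while yours is softer, leans on the harder Lemma \ref{lem-T-S} but on no differential estimate, and in fact treats explicitly the continuity of $S^{-1}$ at the points $(0,z_0)$ with $z_0<0$ of the $Oy$-axis, a case the paper's two displayed inequalities do not directly cover (they only control the behaviour as $z\to 0$). The only hypotheses you import beyond Lemma \ref{lem-T-S} — that $y_c(x)\to 0$ as $x\to 0$ and the one-sided bounds of Lemma \ref{lem-orbits} guaranteeing that your comparison points lie in $\bar Q^+$ or $\bar Q^-$ — are indeed available at this stage, so there is no circularity.
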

 \begin{proof}
 The proof is indeed much easier than the one given in Lemma \ref{lem-T-S} for the direct maps. Now, we can take advantage of the contraction property of the vertical direction, along the flow followed in the $x$-direction, already proved in Lemma \ref{lem-unicity} and used above in Lemma \ref{lem-orbits}.  This property implies that $|T^{-1}_x(z)-y(x)|\leqslant |y(z)|$ and that $|S^{-1}_x(z)-y(x)|\leqslant |z|$. The result follows directly for $z\rightarrow 0$.
 \end{proof}

 As $X^0$ is just a particular case, we have transition functions $T^0_x(y)$ and $S^0_x(y)$ for it, which verify the statements of Lemmas
\ref{lem-orbits}, \ref{lem-T-S} and \ref{lem-T(-1)-S(-1)} (in fact, as $T^0_x(y)= \sqrt{x^2-y^2}$  and  $S^0_x(y)=-\sqrt{y^2-x^2}$,  we can  verify directly their properties).
Using the transitions functions of $X^0$ and $X$ we can now  construct on $Q$ a topological equivalence between these vector fields:
\begin{proposition}\label{prop-Q-equivalence}
We define a function $h_x(y)$ on $Q$ by  $h_x(y)=T^{-1}_x\circ T^0_x(y)$ if $(x,y)\in \bar Q^+$ and by  $h_x(y)=S^{-1}_x\circ S^0_x(y)$ if $(x,y)\in \bar Q^-$ (see Figure \ref{fig-H}). Then, the map $H(x,y)=(x,h_x(y))$ is an homeomorphism of $Q$, which is the identity on the boundary $\partial Q$ ($H(x,0)=(x,0)$ for $x\geqslant 0$ and $H(0,y)=(0,y)$ for $y\leqslant 0$).
Moreover, $H$ is a topological equivalence between $X^0$ and $X,$ i.e. $H$ sends the orbits of $X^0$ onto the orbits of $X$.
\end{proposition}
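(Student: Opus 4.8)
The plan is to check in turn that the map $H(x,y)=(x,h_x(y))$ is well defined on $Q$, is a bijection of $Q$, is bicontinuous, restricts to the identity on $\partial Q$, and finally carries the orbits of $X^0$ onto those of $X$ preserving time-orientation; essentially all the difficulty sits in the continuity. For well-definedness, observe that the two formulas for $h_x$ correspond to the two pieces of $Q$ on which $T^0_x$, respectively $S^0_x$, is defined — the region above the linear stable manifold $L^0=\{y=-x\}$ and the region below it — and these pieces overlap only along $L^0$, where $T^0_x(-x)=S^0_x(-x)=0$ while $T^{-1}_x(0)=y(x)=S^{-1}_x(0)$ by the normalisation of the inverse transition maps in Lemma~\ref{lem-T(-1)-S(-1)}; hence both formulas there give the common value $h_x(-x)=y(x)$, so $h_x$ is single-valued and $H(L^0)=L$. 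For bijectivity, fix $x>0$: by Lemma~\ref{lem-orbits} and the ensuing discussion, $y\mapsto T^0_x(y)$ is a homeomorphism of the $x$-slice above $L^0$ onto $[0,x]$ and $z\mapsto T^{-1}_x(z)$ a homeomorphism of $[0,x]$ onto the $x$-slice of $\{y(x)\le y\le 0\}$, so $h_x$ is a homeomorphism between the upper $x$-slices, and symmetrically below $L^0$; gluing along $L^0$ yields a bijection of $\{x\}\times(-\infty,0]$ onto itself, and for $x=0$ the map degenerates to the identity. Thus $H\colon Q\to Q$ is a bijection, with $H^{-1}(x,y)=(x,(T^0_x)^{-1}\circ T_x(y))$ above $L$ and $(x,(S^0_x)^{-1}\circ S_x(y))$ below.

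For continuity I would split $Q$ into three parts. Away from $L^0$ and from $O$, all four transition maps are analytic, so $h_x$ is continuous there. Along $L^0$ but away from $O$, I would invoke Lemma~\ref{lem-T-S} for both $X$ and $X^0$ (each of $T^0_x,S^0_x$ extends continuously by $0$ along $L^0$) together with Lemma~\ref{lem-T(-1)-S(-1)} (each of $T^{-1}_x,S^{-1}_x$ is continuous at $z=0$), so both branches of $h_x$ extend continuously to $L^0$ with the common value $y(x)$ already identified. At $O$, I would use the a priori bounds $0\le T^0_x(y)\le x$ and $|S^0_x(y)|\le|y|$, which force $T^0_x(y),S^0_x(y)\to0$, combined with $|T^{-1}_x(z)-y(x)|\le|y(z)|$, $|S^{-1}_x(z)-y(x)|\le|z|$ and $y(x)\to0$, to conclude $h_x(y)\to0=h_0(0)$. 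The identical argument with $X$ and $X^0$ interchanged — legitimate because the transition maps of $X^0$ obey the same Lemmas~\ref{lem-orbits},~\ref{lem-T-S},~\ref{lem-T(-1)-S(-1)} — gives continuity of $H^{-1}$; since $Q$ is noncompact this must be checked separately, but it is genuinely symmetric.

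It remains to treat the boundary behaviour and the orbit correspondence. On $Ox^+$ one has $T^0_x(0)=x$ and $T^{-1}_x(x)=0$, while on $Oy^-$ one has $S^0_0(y)=y$ and $S^{-1}_0=\mathrm{id}$, so $H=\mathrm{id}$ on $\partial Q$. For the equivalence, an $X^0$-orbit in the region above $L^0$ is precisely a level set $\{(x,y):T^0_x(y)=a\}$, namely the orbit meeting $Ox^+$ at $(a,0)$, and an $X$-orbit above $L$ is a level set $\{T_x(y)=a\}$; since $h_x=T^{-1}_x\circ T^0_x$, the relation $T^0_x(y)=a$ is equivalent to $T_x(h_x(y))=a$, so $H$ maps the $X^0$-orbit through $(a,0)$ onto the $X$-orbit through $(a,0)$, and symmetrically below the stable manifolds with $S^0_x$ and $S_x$; together with $H(L^0)=L$ and $H(O)=O$ this exhausts every orbit. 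On each of these regions both flows run monotonically toward $Ox^+$, toward $Oy^-$, or toward $O$ in forward time, so $H$ respects time-orientation, which is exactly the definition of a topological equivalence.

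The step I expect to be the real obstacle is the continuity in the second paragraph: the transition functions are improper ``time-to-the-boundary'' quantities that degenerate along $L^0$ and collapse at $O$, and the joint continuity of their inverses and composites there is the only genuinely non-formal point. Fortunately it has been prepared by Lemmas~\ref{lem-T-S} and~\ref{lem-T(-1)-S(-1)}, so the proof ultimately amounts to careful assembly and to the bookkeeping of the two gluings, along $L^0$ and along its image $L$.
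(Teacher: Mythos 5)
Your proposal is correct and follows essentially the same route as the paper: the same gluing of the two branches of $h_x$ along the linear stable manifold, the same appeal to Lemmas~\ref{lem-T-S} and~\ref{lem-T(-1)-S(-1)} for joint continuity of the transition maps and their inverses, and the same explicit formula $(x,y)\mapsto(x,(T^0_x)^{-1}\circ T_x(y))$ (resp.\ with $S$) for $H^{-1}$. Your level-set description of the orbits, $\{T^0_x(y)=a\}\mapsto\{T_x(y)=a\}$, and your separate treatment of continuity at $O$ merely make explicit what the paper leaves as ``by definition of the transition maps,'' so there is nothing to add.
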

\begin{proof} As $T^0_x$ and $T^{-1}_x$ are homeomorphisms from $[0,y_0(x)]$ to $[x,0]$ and from $[x,0]$ to $[0,y(x)]$ respectively, the map $h_x(y)$ is an homeomorphism from $[0,y_0(x)]$ to $[0,y(x)]$.  For similar reasons we have that $h_x(y)$ is an homeomorphism from
$(-\infty,y_0(x)]$ to $(-\infty,y(x)]$. As these two definitions of $h_x(y)$ coincide at $y_0(x)$, they define a global homeomorphism also denoted by $h_x(y)$ from $(-\infty,0]$ to itself.
The continuity  on $Q$ of the map $(x,y)\mapsto h_x(y)$ follows from the  continuity of the transition functions and their inverse in terms of their two variables, proved in Lemmas \ref{lem-T-S} and \ref{lem-T(-1)-S(-1)}. As $h^{-1}_x(y)=(T^0)^{-1}_x\circ T_x(y)$ on $\bar Q^+$ and
$h^{-1}_x(y)=(S^0)^{-1}_x\circ S_x(y)$ on $\bar Q^-$, the same argument shows that
$(x,y)\mapsto h^{-1}_x(y)$ is a continuous family of homeomorphisms from
$(-\infty,0]$ to itself.  Clearly, the map $(x,y)\mapsto (x, h^{-1}_x(y))$ is the inverse of the map $H(x,y)=(x,h_x(y))$. As these two maps are continuous,  we have that $H$ is an homeomorphism of $Q$ to itself. Moreover it is clear that $H$ is equal to the identity on $\partial Q$.

By definition of the transition maps, the homeomorphism $H$ sends each orbit of $X^0$ onto the orbits of
of $X$   (see Figure \ref{fig-H}; in particular $H$ sends $L^0$ onto $L$). We have proved that $H$ is a topological  equivalence on $Q$.
\end{proof}
\vskip5pt
\begin{figure}[htp]
\begin{center}
   \includegraphics[scale=0.7]{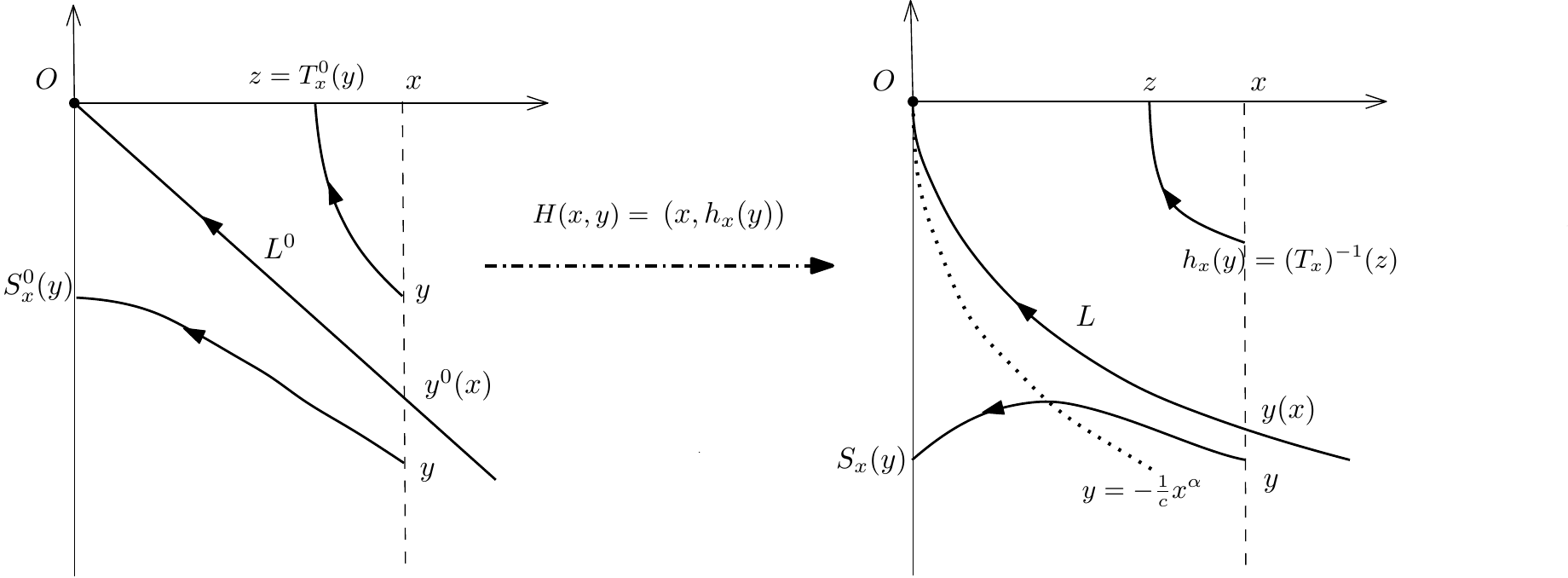}\\
 \caption{\it The topological equivalence $H$}\label{fig-H}
  \end{center}
\end{figure}
\vskip5pt
\subsection{Extension of the equivalence to $\R^2$}
 We can obtain   very similar properties for $X_c$ on the quadrant $Q'=\{x\geqslant 0, y\geqslant 0\}$ as the ones proved   for $X_c$ on $Q,$  in Section \ref{sect-topological-approach} and in the above subsection \ref{subsect-equivalence-Q}.
A noticeable difference is that the slope of $X_c$ is greater than $c/\Lambda$  everywhere in $Q'$. As a consequence, we can find a unique unstable manifold at the origin which is
a graph $x=x_c(y)$ and more generally each orbit $\gamma$ in $Q'$ is a graph $x_\gamma(y)$ above an interval $[y_\gamma,+\infty)$ with $y_\gamma\geqslant 0$.

 We can repeat the proof given in Subsection \ref{subsect-equivalence-Q} in order to construct an homeomorphism $H'$ of $Q$  of the form $(h'_y(x),y)$ where $h'_y(x)$ and
 $(h'_y)^{-1}(x)$ are continuous families of homeomorphisms and such that $H'$ is equal to the identity on $\partial Q'$. This homeomorphism can be glued up with $H$ to give a topological equivalence $\hat H$ of $X^0$ and $X_c$ on the half-plane $\{x\geqslant 0\}$, which is equal to the identity along the $Oy$-axis.

 To extend $\hat H$ to the whole plane, we use the symmetry property of $X^E_c$, already mentioned in Subsection \ref{subsect-phase-portrait}: $X_c^E(-x,-y)=-X_c(x,y)$.
 For each $x\leqslant 0,$ we define $\hat H(x,y)$ by $\hat H(x,y)=-\hat H(-x,-y).$ This extends $\hat H$ in the half-space $\{x\leqslant 0\}$ and the two definitions coincide along $\{x=0\}$  with the identity. Then, this defines a global topological equivalence $\hat H$ on  $\R^2$ between  $X^0$ and $X^E_c$ (this construction takes into account the fact  that the orbits of $X^0$ and $X^E_c$ are crossing the $Oy$-axis at the points $(0,y)$ such that $y\not =0$).


\appendix

\end{document}